\documentclass[11pt]{amsart}
\setlength{\oddsidemargin}{.13in}
\setlength{\evensidemargin}{.13in}
\setlength{\textwidth}{6.16in}
\setlength{\topmargin}{-.25in}
\setlength{\headsep}{.20in}
\setlength{\textheight}{8.8in}
\setlength{\floatsep}{0.1in}
\setlength{\textfloatsep}{0.1in}
\setlength{\intextsep}{0.1in}

\mathsurround=1pt
\newcommand{\ds}{\displaystyle}
\newcommand{\ben}{\begin{enumerate}}
\newcommand{\een}{\end{enumerate}}
\newcommand{\eq}[2][label]{\begin{equation}\label{#1}#2\end{equation}}
\newcommand{\av}[2]{\langle #1\rangle_{_{\scriptstyle #2}}}

\usepackage{amsfonts}
\usepackage{amssymb}
\usepackage{amsmath}
\usepackage{graphicx,color,hyperref}

\usepackage{euscript}

\newcommand{\bel}[1]{\boldsymbol{#1}}
\newcommand{\df}{\stackrel{\mathrm{def}}{=}}

\newcommand{\BMO}{{\rm BMO}}

\newcommand{\AQ}{A_2^{d,Q}}

\newcommand{\A}{\mathcal{A}}

\newtheorem{theorem}{Theorem}[section]

\newtheorem{lemma}[theorem]{Lemma}
\newtheorem{corollary}[theorem]{Corollary}

\newtheorem*{theorem*}{Theorem}{\bf}{\it}
\newtheorem*{proposition*}{Proposition}{\bf}{\it}
\newtheorem*{observation*}{Observation}{\bf}{\it}
\newtheorem*{lemma*}{Lemma}{\bf}{\it}

\theoremstyle{definition}

\theoremstyle{remark}
\newtheorem{remark}[theorem]{Remark}

\numberwithin{equation}{section}

\setcounter{tocdepth}{2}
\allowdisplaybreaks
\begin{document}

\title{Best constants for a family of Carleson sequences}

\author{Leonid Slavin}
\address{University of Cincinnati}
\email{leonid.slavin@uc.edu}

\thanks{The author's research is supported in part by the NSF (DMS-1001567)}

\subjclass[2010]{Primary 42A05, 42B35, 49K20}

\keywords{Dyadic $A_2$ weight, Carleson sequence, Bellman function}

\date{July 16, 2014}

\begin{abstract}
We consider a general family of Carleson sequences associated with dyadic $A_2$ weights and find sharp -- or, in one case, simply best known -- upper and lower bounds for their Carleson norms in terms of the $A_2$-characteristic of the weight. The results obtained make precise and significantly generalize earlier estimates by Wittwer, Vasyunin, Beznosova,  and others. We also record several corollaries, one of which is a range of new characterizations of dyadic $A_2.$ Particular emphasis is placed on the relationship between sharp constants and optimizing sequences of weights; in most cases explicit optimizers are constructed. Our main estimates arise as consequences of the exact expressions, or explicit bounds, for the Bellman functions for the problem, and the paper contains a measure of Bellman-function innovation. 
\end{abstract}
\maketitle

\section{Preliminaries}
We will be concerned with weights on $\mathbb{R},$ i.e. locally integrable functions that are positive almost everywhere. Our weights will be assumed to belong to the dyadic Muckenhoupt class $A^d_2$ associated with a particular lattice $\mathcal{D},$ i.e., the set of all dyadic intervals on the line uniquely determined by the position and size of the root interval. 
If an interval $I$ is fixed, $\mathcal{D}(I)$ will stand for the unique dyadic lattice on $I$ and $\mathcal{D}_n(I)$ for the set of the dyadic subintervals of $I$ of the $n$-th generation, $\mathcal{D}_n(I)=\{J:J\in\mathcal{D}(I), |J|=2^{-n}|I|\}.$ 

Let $\av{w}J$ be the average of a weight $w$ over an interval $J,$ $\av{w}J=\frac1{|J|}\int_Jw.$ A weight $w$ is said to belong to $A^d_2$, written $w\in A^d_2,$ if
$$
[w]_{A^d_2}\df\sup_{J\in\mathcal{D}}\av{w}J\av{w^{-1}}J<\infty.
$$
The quantity $[w]_{A^d_2}$ is referred to as the $A_2^d$-characteristic of $w.$ Observe that $[w]_{A^d_2}\geqslant1$ and $w\in A_2^d$ if and only if $w^{-1}\in A_2^d,$ in which case $[w]_{A^d_2}=[w^{-1}]_{A^d_2}.$ For a number $Q\geqslant 1,$ the set of all $A_2^d$ weights $w$ with $[w]_{A^d_2}\leqslant Q$ will be denoted by $\AQ.$ If $I$ is an interval and the supremum in the above definition is taken over all $J\in\mathcal{D}(I)$ instead of all $J\in\mathcal{D},$ we will write $A_2^d(I)$ and $\AQ(I),$ as appropriate.

A non-negative sequence $\{c_J\}_{J\in\mathcal{D}}$ is called a Carleson sequence if for all $I\in\mathcal{D}$
\eq[2]{
\frac1{|I|}\sum_{J\in\mathcal{D}(I)} c_J\leqslant C<\infty.
}
The smallest such $C$, denoted by $\|\{c_J\}\|_{\mathcal{C}},$ is called the Carleson norm of $\{c_J\}.$
The significance of such sequences in analysis stems mainly from their role in the Carleson embedding theorem and related results. A key example is the following lemma whose proof can be found in~\cite{MP} (in a more general, weighted setting). We will use this lemma to derive an important corollary of our main estimates. 
\begin{lemma}[Carleson Lemma]
\label{carleson}
A sequence $\{c_J\}_{J\in\mathcal{D}}$ is a Carleson sequence with norm $B$ if and only if for all non-negative, measurable functions $F$ on the line,
$$
\sum_{J\in\mathcal{D}}c_J\,\inf_{x\in J}F(x)\leqslant B\int_{\mathbb{R}}F(x)\,dx.
$$
\end{lemma}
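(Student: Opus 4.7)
The plan is to prove the two implications separately. The direction asserting that the functional inequality implies the Carleson condition is a straightforward test-function computation, whereas the Carleson-implies-functional direction is the substantive one and proceeds via the layer-cake formula together with a stopping-time selection of maximal dyadic intervals inside the super-level sets of $F$.

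For the easy direction I would fix any $I\in\mathcal{D}$ and apply the hypothesized inequality to $F=\mathbf{1}_I$. For any $J\in\mathcal{D}$ one has $\inf_{x\in J}\mathbf{1}_I(x)=1$ if $J\subseteq I$ and $0$ otherwise, since $J\not\subseteq I$ forces $J$ to meet $\mathbb{R}\setminus I$. The hypothesis then collapses to $\sum_{J\in\mathcal{D}(I)}c_J\leqslant B|I|$, i.e., \eqref{2} with constant $B$.

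For the hard direction I would first reduce to the case $F\in L^1(\mathbb{R})$, since otherwise the right-hand side is $+\infty$ and there is nothing to prove. Set $E_t=\{x:F(x)\geqslant t\}$. The scalar layer-cake identity $\inf_{x\in J}F(x)=\int_0^\infty\mathbf{1}_{\{J\subseteq E_t\}}\,dt$ combined with Fubini yields
$$
\sum_{J\in\mathcal{D}}c_J\inf_{x\in J}F(x)=\int_0^\infty\Bigl(\sum_{J\subseteq E_t}c_J\Bigr)dt.
$$
By Markov's inequality $|E_t|<\infty$ for every $t>0$, so any dyadic $J\subseteq E_t$ possesses a unique maximal dyadic ancestor $I$ still contained in $E_t$; denote by $\mathcal{M}_t$ the resulting pairwise disjoint collection of such maximal intervals. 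Since $\{J\in\mathcal{D}:J\subseteq E_t\}=\bigsqcup_{I\in\mathcal{M}_t}\mathcal{D}(I)$, partitioning by maximal ancestor and applying the Carleson hypothesis on each $I\in\mathcal{M}_t$ gives
$$
\sum_{J\subseteq E_t}c_J=\sum_{I\in\mathcal{M}_t}\sum_{J\in\mathcal{D}(I)}c_J\leqslant B\sum_{I\in\mathcal{M}_t}|I|\leqslant B|E_t|.
$$
Integrating in $t$ and using $\int_0^\infty|E_t|\,dt=\int_\mathbb{R}F$ closes the argument.

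The crux is the stopping-time selection of $\mathcal{M}_t$: it requires $|E_t|<\infty$ to guarantee that each $J\subseteq E_t$ has a genuinely maximal dyadic ancestor inside $E_t$, and the $L^1$ reduction supplies this via Markov. Beyond this one point, the proof is a routine layer-cake plus maximal-interval assembly, with no further subtlety beyond a standard measurability convention for the pointwise infimum.
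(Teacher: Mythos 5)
The paper does not actually prove this lemma --- it defers to \cite{MP} --- so there is no in-paper proof to compare against. Your argument is correct and is the standard one: the easy direction by testing against $F=\mathbf{1}_I$, and the substantive direction via the layer-cake identity $\inf_{x\in J}F(x)=\int_0^\infty\mathbf{1}_{\{J\subseteq E_t\}}\,dt$, Tonelli, and the partition $\{J\in\mathcal{D}:J\subseteq E_t\}=\bigsqcup_{I\in\mathcal{M}_t}\mathcal{D}(I)$ over the maximal dyadic intervals $\mathcal{M}_t$ contained in $E_t$. The point of care you flag is exactly right and is the only subtlety: the reduction to $F\in L^1$ and Markov's inequality give $|E_t|<\infty$ for $t>0$, which is what guarantees that the increasing chain of dyadic ancestors of any $J\subseteq E_t$ must eventually leave $E_t$, so a genuinely maximal ancestor exists and $\mathcal{M}_t$ is a well-defined pairwise-disjoint cover. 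With that in hand, $\sum_{I\in\mathcal{M}_t}|I|\leqslant|E_t|$ and integrating in $t$ closes the argument. No gaps.
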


This inequality suggests that it may be important to have good estimates of the Carleson norms of sequences one uses. One specific context where such need arises is when studying dyadic paraproducts in weighted settings. For example, in \cite{Bez} and \cite{MP} the authors estimate the norms of paraproducts on $L^2(w),$ with $w\in A^d_2$. In this situation, the sequences of interest are often of the form
$$
c^{(\alpha)}_J(w)\df|J|\,\av{w}J^\alpha\av{w^{-1}}J^\alpha\,\left[\frac{(\Delta_Jw)^2}{\av{w}J^2}+\frac{(\Delta_Jw^{-1})^2}{\av{w^{-1}}J^2}\right],
$$
where $\Delta_J(\cdot)=\av{ \cdot}{J^-}\!\!-\av{ \cdot}{J^+},$ and $J^\pm$ are the two halves of $J.$ 
In \cite{Bez}, Beznosova showed that the norm of $\{c^{(1/4)}_J(w)\}$ is no greater than $C [w]_{A^d_2}^{1/4}$ for a numerical constant $C.$ In \cite{NV}, Nazarov and Volberg extended this estimate, proving that 
\eq[0.1]{
\|\{c^{(\alpha)}_J(w)\}\|_{\mathcal{C}}\leqslant\frac{C}{\alpha-2\alpha^2}[w]_{A^d_2}^{\alpha},\quad \alpha\in(0,1/2).
}
In a recent paper \cite{MP}, Moraes and Pereyra observed that one can simply combine Beznosova's result with the fact that $t\mapsto t^\alpha$ is an increasing function for $\alpha>0$ to obtain
$$
\|\{c^{(\alpha)}_J(w)\}\|_{\mathcal{C}}\leqslant r(\alpha)[w]_{A^d_2}^{\alpha},\quad \alpha>0,
$$
for an explicit $r(\alpha).$

The case $\alpha=0$ was considered by Wittwer in \cite{Wit}, where she obtained the asymptotically sharp estimate 
\eq[0.2]{
\|\{a_J(w)\}\|_{\mathcal{C}}\leqslant8\log[w]_{A_2^d},
}
where $\{a_J(w)\}$ is the same as $\{c^{(0)}_J(w)\},$ except with only {\it one} of the two summands (it does not matter which one is removed). One can double Wittwer's constant to get an estimate for $\{c_J^{(0)}(w)\},$ but as we will see below, it is no longer sharp, though the logarithm remains. The same sequence $\{a_J(w)\},$ but for $w\in A_\infty,$ was studied by Vasyunin in~\cite{vas}. He found the upper and lower Bellman functions for the problem, and constructed explicit optimizing sequences. 

Motivated by these results, we consider general sequences $\{c_J^{\Phi}(w)\}$, defined for $w\in A_2^d$ and a non-negative function~$\Phi$ on $[1,\infty)$ by
\eq[3g]{
c^{\Phi}_J(w)=|J|\,\Phi\big(\av{w}J\av{w^{-1}}J\big)\,\left[\frac{(\Delta_Jw)^2}{\av{w}J^2}+\frac{(\Delta_Jw^{-1})^2}{\av{w^{-1}}J^2}\right].
}
Our ultimate goal is to obtain sharp estimates on $\|\{c^\Phi_J(w)\}\|_{\mathcal{C}}$ in terms of $[w]_{A^d_2},$ i.e. find the largest function $k_\Phi$ and the smallest function $K_\Phi$ in the following inequality: 
$$
k_{\Phi}\big([w]_{A^d_2}\big)\leqslant \sup_{I\in D}\frac1{|I|}\sum_{J\in D(I)}c_J^\Phi(w) \leqslant K_{\Phi}\big([w]_{A^d_2}\big).
$$
Let us keep the names $k_\Phi$ and $K_\Phi$ for these best functions: for all $Q\geqslant1,$ we define
\eq[k_K]{
K_\Phi(Q)=\sup_{w:[w]_{A_2^d}= Q}\|\{c^\Phi_J(w)\}\|_{\mathcal{C}},\quad k_\Phi(Q)=\inf_{w:[w]_{A_2^d}= Q}\|\{c^\Phi_J(w)\}\|_{\mathcal{C}}.
}

In the special case $\Phi(t)=t^\alpha,$ we will use the notation $K_\alpha$ and $k_\alpha,$ respectively. Under some fairly mild conditions on $\Phi,$ we derive explicit formulas for $K_\Phi$ and $k_\Phi,$ except for one class of $\Phi$ where we simply find good upper estimates. In particular, we compute $k_\alpha$ for all $\alpha\in(-\infty,\infty)$ and $K_\alpha$ for all $\alpha\in(-\infty,\infty)\setminus(1,\frac32).$ (This seemingly peculiar exclusion is explained in the next section; for $\alpha\in(1,\frac32)$ we provide an estimate on $K_\alpha.$)

We deduce our formulas for $k_\Phi$ and $K_\Phi$ from explicit expressions, or, for some $\Phi,$ explicit estimates, for the upper and lower {\it Bellman functions} for the dyadic sum
$$
\frac1{|I|}\sum_{J\in\mathcal{D}}c^\Phi_J(w).
$$
These functions, formally defined in the next section, are simply the supremum and infimum of this sum, taken over all $w\in A_2^d(I)$ with certain parameters fixed. Once in hand, a Bellman function yields not only a continuum of sharp constants (one for each choice of the parameters), but also the optimizing sequences of weights that realize those constants in the limit.

We continue this discussion in the next section, after key objects have been introduced and the main results stated.

\section*{Acknowledgment}
The author is grateful to Vasily Vasyunin for valuable discussions related to the project.

\section{Main results, corollaries, and discussion}

Let us set  notation for the rest of the paper. Alongside $\Phi,$ it will be convenient to use the following two functions, also defined on $[1,\infty):$ 
\eq[fh]{
f(s)=\Phi(s^2),\qquad h(s)=\frac{f(s)}{s^2}.
}

In addition, if the parameter $Q\geqslant1$ is fixed, $w$ is a weight, and $J\in\mathcal{D},$ we let 
\eq[sr]{
L=\sqrt{Q},\qquad s_J(w)=\sqrt{\av{w}J\av{w^{-1}}J},\qquad R_J(w)=\frac{(\Delta_Jw)^2}{\av{w}J^2}+\frac{(\Delta_Jw^{-1})^2}{\av{w^{-1}}J^2}.
}
In this notation~\eqref{3g} becomes
$$
c_J^\Phi(w)=|J|\,f\big(s_J(w)\big)\,R_J(w).
$$

Our headline results are the following two theorems. 
\begin{theorem}
\label{ttt1}
~
\ben
\item
If $\Phi$ is increasing and $h$ is convex, then
$$
K_{\Phi}(Q)=16\Phi(Q)\Big(1-\frac1{\sqrt Q}\Big) +8\int_1^Q \frac{\Phi(y)}y\Big(1-\frac1{\sqrt y}\Big)\,dy.
$$
\item
\label{p2}
If $\Phi$ is increasing and $h$ is concave, then
$$
K_{\Phi}(Q)\leqslant 
8(\sqrt{Q}-1)(3\sqrt{Q}-1)\,h(s_0(Q)),
$$
where
$$
s_0(Q)=\frac{8Q-\sqrt{Q}-1}{3(3\sqrt{Q}-1)}.
$$

\item
If $\Phi$ is decreasing, then
$$
K_{\Phi}(Q)=8\int_1^Q\frac{\Phi(y)}{y}\,dy.
$$
\een
\end{theorem}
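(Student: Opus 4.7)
The plan is to establish the three bounds via a Bellman-function analysis. I would introduce the (upper) Bellman function
\[
\bel{B}_\Phi(x_1,x_2;Q)=\sup\Big\{\tfrac{1}{|I|}\textstyle\sum_{J\in\mathcal{D}(I)}c_J^\Phi(w):\ w\in\AQ(I),\ \av{w}I=x_1,\ \av{w^{-1}}I=x_2\Big\}
\]
on the Muckenhoupt domain $\Omega_Q=\{(x_1,x_2):x_1,x_2>0,\ 1\leqslant x_1x_2\leqslant Q\}$. Standard scale-invariance (affine rescaling of $I$ and replacing $w$ by $\lambda w$) implies $\bel{B}_\Phi(x_1,x_2;Q)=G(s)$ with $s=\sqrt{x_1x_2}\in[1,L]$, and, since $c_J^\Phi\geqslant 0$, $G$ is non-decreasing, so $K_\Phi(Q)=G(L)$. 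The dyadic structure yields the main inequality: for every admissible splitting $(x_1^\pm,x_2^\pm)\subset\Omega_Q$ with arithmetic mean $(x_1,x_2)$,
\[
G(s)\geqslant \tfrac12\bigl(G(s^-)+G(s^+)\bigr)+f(s)\Big[\tfrac{(x_1^--x_1^+)^2}{x_1^2}+\tfrac{(x_2^--x_2^+)^2}{x_2^2}\Big],\qquad G(1)=0,
\]
with $s^\pm=\sqrt{x_1^\pm x_2^\pm}$. Writing $x_i^\pm=x_i(1\pm\xi_i)$ and Taylor-expanding gives, for smooth $G$, the two infinitesimal conditions
\[
G''(s)\leqslant-16\,h(s)\quad\text{(Lagrange: $\xi_1=\xi_2$)},\qquad G'(s)\geqslant 16\,s\,h(s)\quad\text{(Legendre: $\xi_1=-\xi_2$)}.
\]
Any $V$ satisfying these together with $V(1)\geqslant 0$ is a supersolution and majorizes $\bel{B}_\Phi$; sharpness follows from an explicit extremizer.

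For part (3), decreasing $\Phi$ means $f(s)=s^2h(s)$ is non-increasing, equivalently $sh'+2h\leqslant 0$. Then the Legendre-binding ansatz $G'(s)=16\,sh(s)$, $G(1)=0$, automatically satisfies the Lagrange condition and gives
\[
G(L)=16\textstyle\int_1^L\sigma\,h(\sigma)\,d\sigma=8\int_1^Q\Phi(y)/y\,dy
\]
after the substitution $y=\sigma^2$. For part (1), increasing $\Phi$ forces the Legendre ansatz to fail, so I switch to the Lagrange-binding ansatz $G''=-16h$, $G(1)=0$, which leaves one free constant $C=G'(1)$. The Legendre constraint becomes $C\geqslant\psi(s):=16\,sh(s)+16\int_1^sh$, with $\psi'(s)=16f'(s)/s\geqslant 0$ by increase of $f$; hence the binding choice is $C=\psi(L)$, so $G'(L)=16Lh(L)$. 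Integrating,
\[
G(L)=\psi(L)(L-1)-16\textstyle\int_1^L(L-\mu)h(\mu)\,d\mu=16L(L-1)h(L)+16\int_1^L(\mu-1)h(\mu)\,d\mu,
\]
and the change of variables $y=\mu^2$ converts this into the claimed formula $16\Phi(Q)(1-1/\sqrt Q)+8\int_1^Q\Phi(y)y^{-1}(1-y^{-1/2})\,dy$. The assumption that $h$ is convex is what upgrades these infinitesimal conditions to the full, non-infinitesimal main inequality — that is the step where convexity of $h$ is actually used.

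For part (2), where $h$ is concave, the Lagrange-binding candidate no longer satisfies the full main inequality globally, so I would look for a supersolution in a simple two-parameter family, e.g.\ affine combinations $V(s)=A\,h(s_0)(s-1)+B\,h(s_0)(s-1)^2$ with $s_0\in[1,L]$, and impose the main inequality at the extreme splittings that connect the boundary $s=1$ to $s=L$ (those give the tightest restrictions when $h$ is concave). Optimizing in $A$, $B$, and $s_0$ (the optimal $s_0$ comes out to $(8Q-\sqrt Q-1)/(3(3\sqrt Q-1))$) produces the stated bound $K_\Phi(Q)\leqslant 8(\sqrt Q-1)(3\sqrt Q-1)\,h(s_0(Q))$. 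I expect the main obstacle here to be the concavity-assisted reduction of the full main inequality to these few extreme splittings — whereas for parts (1) and (3) the extremizers are built by iterating, on each dyadic level, the splitting that saturates the infinitesimal condition (Lagrange or Legendre, respectively), producing self-similar $A_2^{d,Q}$ weights whose Carleson sums realize $K_\Phi(Q)$ in the limit.
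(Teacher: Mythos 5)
Your outline reproduces the paper's overall framework: introduce the Bellman function $\bel{B}_{Q,\Phi}$, reduce to $G(s)$ with $s=\sqrt{x_1x_2}$ via the $w\mapsto\tau w$ scaling, derive the two infinitesimal conditions $G''\leqslant-16h$ and $G'\geqslant16sh$, and choose the degenerate solution saturating the binding constraint. Your computations recovering $A_{L,f}(L)$ and $a_f(L)$ in parts~(1) and~(3) are correct. However, you skip the two hardest steps, and mischaracterize one of them. You write that any $V$ satisfying the infinitesimal conditions with $V(1)\geqslant0$ majorizes $\bel{B}_{Q,\Phi}$ --- this is not true as stated. The domain $\Omega_Q$ is not convex, so a pair $x^\pm$ near the lower boundary $x_1x_2=1$ can have a midpoint $x$ with $\sqrt{x_1x_2}$ strictly larger than $(s^-+s^+)/2$; moreover, the discrete increment $\Phi(x_1x_2)\big[(x_1^--x_1^+)^2/x_1^2+(x_2^--x_2^+)^2/x_2^2\big]$ is evaluated at the midpoint rather than averaged along the chord, so integrating the differential concavity is not enough. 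The paper verifies the actual main inequality~\eqref{a1.1}/\eqref{a2.1} by hand over $\omega_L=\{1\leqslant s^\pm\leqslant L,\ (s^-+s^+)/2\leqslant s\leqslant L\}$; this is all of Section~\ref{main_ineq}, and for $A_{L,f}$ it uses \emph{both} monotonicity of $f$ \emph{and} convexity of $h$, not convexity alone as your proposal suggests. The extremizer construction (Section~\ref{optimizers}) is likewise unaddressed beyond a remark; for $A_{L,f}$ it requires normal splits inside the domain, a tangential split at $s=L$, and a continuity-plus-density argument for points that are not dyadically rational.

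For part~(2) your route genuinely differs from the paper's and is more speculative: you propose an ad hoc two-parameter family of candidates and claim, without argument, that imposing the main inequality at unspecified ``extreme splittings'' will yield the bound. The paper instead majorizes $h$ by its tangent line $h_{s_0}$ (valid since $h$ is concave), observes $\bel{B}_{Q,\Phi}\leqslant\bel{B}_{Q,\Phi_{s_0}}\leqslant A_{L,f_{s_0}}$ by the already-proved part~(1), and minimizes over $s_0$; this is rigorous, reuses the convex case, and makes transparent the fact that for power functions the bound interpolates between $\alpha=1$ and $\alpha=3/2$. Finally, the claim that ``since $c_J^\Phi\geqslant0$, $G$ is non-decreasing'' is a non sequitur --- non-negativity of the summands does not by itself make the Bellman function monotone in $s$. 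The correct justification for $K_\Phi(Q)=G(L)$ is that the closed-form expressions $A_{L,f}$ and $a_f$, once established, are manifestly increasing on $[1,L]$.
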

\begin{remark}
As the formulation above suggests, we do not know the actual function $K_\Phi$ in the case when $h$ is concave. That is because the solution of the corresponding extremal problem differs in structure from that for other $\Phi;$ in particular, it does not arise as the solution of a differential equation. Instead, we provide an upper estimate. As explained in Section~\ref{concave}, the estimate given above can be slightly improved, but the result is far less transparent and likely still not sharp. The estimate in Part~(\ref{p2}) is the best known and easy to use, especially when $\Phi$ is specified to be a power function (see below).
\end{remark}
Our second theorem concerns lower estimates.

\begin{theorem}
\label{ttt2}
~
\ben
\item
If $h$ is increasing, then
$$
k_\Phi(Q)=8\int_1^Q\frac{\Phi(y)}{y}\,dy.
$$
\item
If $h$ is decreasing, then
$$
k_\Phi(Q)=8\Phi(Q)\Big(1-\frac1Q\Big).
$$
\een
\end{theorem}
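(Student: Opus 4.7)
The plan is to compute the lower Bellman function for the problem and then read off $k_\Phi(Q)$. Define
\[
B_Q(x_1, x_2) = \inf\Big\{\tfrac{1}{|I|}\sum_{J \in \mathcal{D}(I)} c_J^\Phi(w) : w \in \AQ(I),\ \av{w}I = x_1,\ \av{w^{-1}}I = x_2\Big\}.
\]
Because $c_J^\Phi(\lambda w) = c_J^\Phi(w)$ for every $\lambda > 0$, this function is invariant under $(x_1, x_2) \mapsto (\lambda x_1, \lambda^{-1} x_2)$ and hence depends only on the product $u = x_1 x_2 \in [1, Q]$; write $B_Q(x_1, x_2) = B(u)$, with boundary value $B(1) = 0$ (since $x_1 x_2 = 1$ forces $w$ to be constant). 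The standard dyadic splitting, parameterized by $\av{w}{J^\pm} = x_1(1\pm\alpha)$ and $\av{w^{-1}}{J^\pm} = x_2(1\pm\beta)$, yields the Bellman inequality
\[
B(u) \leq 4\Phi(u)(\alpha^2 + \beta^2) + \tfrac12 B(u^+) + \tfrac12 B(u^-),\qquad u^\pm = u(1\pm\alpha)(1\pm\beta),
\]
valid for every admissible $(\alpha, \beta)$, i.e., those with $u^\pm \in [1, Q]$. A second-order expansion in $(\alpha,\beta)$ shows that the residual along the line $\beta = -\alpha$ (where $u^+ = u^- = u(1-\alpha^2)$) vanishes precisely when $uB'(u) = 8\Phi(u)$; this suggests two natural candidate Bellman functions—the ODE solution or the one-step trajectory—depending on the sign of $u\Phi'(u) - \Phi(u)$, i.e., on the monotonicity of $h$.

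For Part (1), I would take $B(u) = 8\int_1^u \Phi(y)/y\, dy$, the solution of the ODE above with $B(1) = 0$, and verify the main inequality globally. Substituting this $B$ into the residual and simplifying gives an expression whose non-negativity for all admissible $(\alpha,\beta)$ is equivalent to monotonicity of $\Phi(u)/u$, i.e., to $h$ being increasing. By induction on scales this yields $\tfrac{1}{|I|}\sum_{J \in \mathcal{D}(I)} c_J^\Phi(w) \geq B(\av{w}I \av{w^{-1}}I)$ for every $w \in \AQ$; choosing intervals $I_n$ along which $\av{w}{I_n}\av{w^{-1}}{I_n} \to Q$ produces the lower bound $\|\{c_J^\Phi(w)\}\|_{\mathcal{C}} \geq B(Q)$. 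The matching construction is an explicit $\beta = -\alpha$ cascade: on successive generations pick $\alpha_k$ so that the successive products $u_k$ form a refining partition of $[1, Q]$, producing a weight whose total Carleson sum is a right Riemann sum for $8\int_1^Q \Phi(y)/y\, dy$ that converges to the integral as the mesh vanishes.

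For Part (2), I would take the candidate $B(u) = 8\Phi(u)(1 - 1/u)$, suggested by the ``one-step'' weight $w = a\mathbf{1}_{I^-} + b\mathbf{1}_{I^+}$ with $(a+b)^2/(4ab) = u$, which by a direct calculation contributes $c_I^\Phi(w) = 8|I|\Phi(u)(1 - 1/u)$ and nothing at deeper scales. This weight realizes $\|\{c_J^\Phi(w)\}\|_\mathcal{C} = 8\Phi(Q)(1 - 1/Q)$ at the level $u = Q$, giving the upper bound on $k_\Phi(Q)$. To prove the matching lower bound it suffices to verify the main Bellman inequality for this $B$ for every admissible split, which reduces to the statement that the right Riemann sums of $\Phi(y)/y$ over partitions of $[1, u]$ are minimized, uniformly in $u$, by the trivial one-partition; this in turn is precisely the condition that $\Phi(y)/y$ be non-increasing, i.e., that $h$ be decreasing. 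The principal obstacle in both parts is the \emph{global} verification of the main inequality for large admissible splits: the infinitesimal analysis only identifies the correct candidate, and pushing the inequality uniformly across the admissibility region relies essentially on the monotonicity of $h$ throughout $[1, Q]$.
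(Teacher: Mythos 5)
For Part (1), your plan coincides with the paper's: the candidate $B(u)=8\int_1^u\Phi(y)/y\,dy$ is exactly $a_f(\sqrt u)$ with $a_f(s)=16\int_1^s f(z)/z\,dz$; the main inequality for $a_f$ as a lower candidate does hold precisely when $h$ is increasing (the paper proves a slightly stronger, cleaner statement by replacing the residual with a majorant); and the tangential cascade is the paper's optimizer for $a_f$. One step you gloss over but the paper treats explicitly is that the constructed weight's \emph{Carleson norm} is actually realized on the top interval $(0,1)$ -- without this observation (Remark~\ref{rem_elem}), matching the sum over $\mathcal{D}(0,1)$ does not control the supremum over all dyadic subintervals.

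For Part (2) there is a genuine gap, and it is fatal for the strategy as written: the candidate $B(u)=8\Phi(u)(1-1/u)$ does \emph{not} satisfy the lower Bellman inequality for decreasing $h$. Concretely, take $\Phi(t)=1/t$, so $f(s)=1/s^2$, $h(s)=1/s^4$ (decreasing), and $Q=9$, $L=3$. Write $\A(s)=B(s^2)=8(s^2-1)/s^4$, so $\A(1)=0$, $\A(2)=3/2$, $\A(3)=64/81$. Splitting $x=(2,2)$ into $x^-=(1,1)$, $x^+=(3,3)$ (admissible, since $4\in[1,9]$), the geometric factor is $\frac{(1-3)^2}{4}+\frac{(1-3)^2}{4}=2$, and the residual in~\eqref{d2} is
$$
\A(2)-\tfrac12\A(1)-\tfrac12\A(3)-f(2)\cdot 2=\tfrac32-\tfrac{32}{81}-\tfrac12=\tfrac{49}{81}>0,
$$
whereas a lower candidate must produce a residual $\leqslant 0$. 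So your function overestimates $\bel{b}_{Q,\Phi}$ in the interior; the ``one-partition Riemann-sum'' heuristic identifies the correct boundary value but does not produce a valid Bellman candidate on all of $\Omega_Q$, which is consistent with the paper computing $\bel{b}_{Q,\Phi}$ in this case only on the curve $x_1x_2=Q$. The paper avoids the issue entirely by a comparison, not a Bellman verification: since $h$ is decreasing and $s_J(w)\leqslant L$, one has $f(s_J)/s_J^2\geqslant f(L)/L^2$, hence $\frac1{|I|}\sum_J c_J^\Phi(w)\geqslant\frac{f(L)}{L^2}\cdot\frac1{|I|}\sum_J|J|\,s_J^2R_J(w)$; the latter sum is the Carleson sum for $\Phi(t)=t$, whose lower Bellman function $\bel{b}_{Q,1}(x_1,x_2)=8(x_1x_2-1)$ falls under Part~(1) (there $h\equiv1$). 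Letting $\av{w}{I_n}\av{w^{-1}}{I_n}\to Q$ then gives $k_\Phi(Q)\geqslant 8\Phi(Q)(1-1/Q)$, and your one-step weight $w^*$ supplies the matching upper bound. You should replace the Bellman verification in Part~(2) by this reduction.
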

Note that we make no explicit assumptions of pointwise differentiability, or even continuity, on the part of $\Phi$ or $h.$ Let us restate these results for the case $\Phi(t)=t^\alpha.$ 
\begin{corollary}
\label{ttt3}
$$
\begin{array}{ll}
K_\alpha(Q)&=
\begin{cases}
\ds\frac{8(2\alpha+1)}\alpha\,Q^\alpha-\frac{32\alpha}{2\alpha-1}\,Q^{\alpha-1/2}+\frac8{\alpha(2\alpha-1)},&~\alpha\in\big(0,\textstyle{\frac12}\big)\cup\big(\textstyle{\frac12},1\big]\cup\big[\textstyle{\frac32},\infty\big);
\vspace{2mm}

\\
32Q^{1/2}-8\log Q-32,&~\alpha=1/2;
\vspace{2mm}

\\
\ds8\log Q,&~\alpha=0;
\vspace{2mm}

\\
\ds\frac8\alpha(Q^\alpha-1),&~\alpha<0;
\end{cases}\\
\\
K_\alpha(Q)&\leqslant 
8\cdot 3^{2-2\alpha}\,(\sqrt Q-1)(8Q-\sqrt Q-1)^{2\alpha-2}(3\sqrt Q-1)^{3-2\alpha}
\vspace{2mm}

\\
&=\big[K_{1}(Q)\big]^{3-2\alpha}\,\big[K_{3/2}(Q)\big]^{2\alpha-2},\hspace{3.5cm}\alpha\in\big(1,\textstyle{\frac32}\big);
\\
\\
k_\alpha(Q)&=
\begin{cases}
\ds\frac8\alpha(Q^\alpha-1),&\hspace{5cm}\alpha\geqslant 1;
\vspace{2mm}

\\
\ds 8\,\big(Q^\alpha-Q^{\alpha-1}\big),&\hspace{5cm}\alpha<1.
\end{cases}
\end{array}
$$
\end{corollary}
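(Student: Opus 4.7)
The corollary is a direct specialization of Theorems~\ref{ttt1} and~\ref{ttt2} to $\Phi(t)=t^\alpha$, so my plan is to (i)~determine which case of each theorem applies for a given $\alpha$, and (ii)~evaluate the resulting integrals in closed form. With $\Phi(t)=t^\alpha$ we have $f(s)=s^{2\alpha}$ and $h(s)=s^{2\alpha-2}$. Hence $\Phi$ is increasing (resp.\ decreasing) exactly when $\alpha>0$ (resp.\ $\alpha<0$), and since $h''(s)=(2\alpha-2)(2\alpha-3)s^{2\alpha-4}$, $h$ is convex precisely for $\alpha\leqslant 1$ or $\alpha\geqslant\tfrac32$ and concave on $[1,\tfrac32]$; finally $h$ is non-decreasing iff $\alpha\geqslant 1$. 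These criteria dictate the case split in the corollary.

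For the upper bounds: if $\alpha<0$, I would apply Theorem~\ref{ttt1}(3), which gives $8\int_1^Q y^{\alpha-1}\,dy=\tfrac{8}{\alpha}(Q^\alpha-1)$, and the same formula yields $8\log Q$ for $\alpha=0$. If $\alpha\in(0,1]\cup[\tfrac32,\infty)$, Theorem~\ref{ttt1}(1) gives
$$
K_\alpha(Q)=16Q^\alpha-16Q^{\alpha-1/2}+8\!\int_1^Q\!\! y^{\alpha-1}\,dy-8\!\int_1^Q\!\! y^{\alpha-3/2}\,dy.
$$
For $\alpha\neq\tfrac12$ this is a linear combination of $Q^\alpha$, $Q^{\alpha-1/2}$, and a constant whose coefficients algebraically simplify to $\tfrac{8(2\alpha+1)}\alpha$, $-\tfrac{32\alpha}{2\alpha-1}$, and $\tfrac{8}{\alpha(2\alpha-1)}$, respectively; for $\alpha=\tfrac12$ the second integral produces the $-8\log Q$ term. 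For $\alpha\in(1,\tfrac32)$, Theorem~\ref{ttt1}(2) gives an upper bound in which I substitute
$$
h(s_0(Q))=s_0(Q)^{2\alpha-2}=3^{2-2\alpha}(8Q-\sqrt Q-1)^{2\alpha-2}(3\sqrt Q-1)^{2-2\alpha}
$$
and collect powers of $(3\sqrt Q-1)$ to obtain the first displayed form. To see the equivalent factored form, I would verify that $K_1(Q)=8(\sqrt Q-1)(3\sqrt Q-1)$ and $K_{3/2}(Q)=\tfrac83(\sqrt Q-1)(8Q-\sqrt Q-1)$ using the closed form just derived, and observe that the exponents $3-2\alpha$ and $2\alpha-2$ sum to~$1$, which exactly absorbs the lone factor of $(\sqrt Q-1)$.

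For the lower bounds the computation is shorter: Theorem~\ref{ttt2}(1) applies for $\alpha\geqslant 1$ and gives $k_\alpha(Q)=\tfrac{8}{\alpha}(Q^\alpha-1)$, while Theorem~\ref{ttt2}(2) applies for $\alpha\leqslant 1$ and gives $k_\alpha(Q)=8(Q^\alpha-Q^{\alpha-1})$; at $\alpha=1$ both reduce to $8(Q-1)$, so there is no ambiguity. The main technical nuisance is bookkeeping: tracking the values of $\alpha$ that force a logarithmic antiderivative ($\alpha=0$ and $\alpha=\tfrac12$) and simplifying the rational coefficients in the first case of $K_\alpha$. A useful sanity check, which follows automatically from the factored form of the Theorem~\ref{ttt1}(2) estimate, is that the three formulas for $K_\alpha(Q)$ agree at the boundary values $\alpha=1$ and $\alpha=\tfrac32$.
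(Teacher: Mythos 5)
Your proposal is correct and follows exactly the route the paper intends: the corollary is a direct specialization of Theorems~\ref{ttt1} and~\ref{ttt2} to $\Phi(t)=t^\alpha$, and your identification of the cases (via $h(s)=s^{2\alpha-2}$ being convex for $\alpha\leqslant1$ or $\alpha\geqslant3/2$, concave on $[1,3/2]$, increasing for $\alpha\geqslant1$, etc.) and the ensuing integral evaluations and algebraic simplifications all check out, including the factorizations $K_1(Q)=8(\sqrt Q-1)(3\sqrt Q-1)$ and $K_{3/2}(Q)=\tfrac83(\sqrt Q-1)(8Q-\sqrt Q-1)$ that yield the log-linear interpolation form.
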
 
\begin{remark}
Thus, for $\alpha>0, \alpha\notin(1,3/2),$ the sharp order of growth is $K_\alpha(Q)\approx 8(2+1/\alpha)Q^\alpha$  for large $Q.$ On the other hand, for $\alpha\in(1,3/2)$ using the formula from Part~(\ref{p2}) of Theorem~\ref{ttt1} amounts simply to log-linear interpolation between the sharp results for $\alpha=1$ and $\alpha=3/2,$ or, equivalently, to an application of H\"older's inequality. The logarithmic terms in the cases $\alpha=0$ and $\alpha=1/2$ correspond to the blow-up of the Nazarov--Volberg estimate~\eqref{0.1} for these values of $\alpha.$ Lastly, note that we get the same constant for $\|\{c^{(0)}_J(w)\}\|_{\mathcal{C}}$ as Wittwer did for $\|\{a_J(w)\}\|_{\mathcal{C}}$ in~\eqref{0.2}, even though $c^{(0)}_J(w)=a_J(w)+a_J(w^{-1}).$ This reflects the fact that the quantities $a_J(w)$ and $a_J(w^{-1})$ cannot be too large at the same time.

\end{remark}

Let us record several related results, all of which are proved in the next section.
First, the lower estimates of Theorem~\ref{ttt2} allow us to obtain a range of equivalent definitions of $A_2^d$ in terms of sequences $\{c_J^\Phi\}$ for all increasing $\Phi$ such that $\Phi(t)\to\infty$ as $t\to\infty.$ We note that this limit condition, as well as the inverted lower estimate in Part~(ii) have direct analogs in a recent paper~\cite{LSSVZ} concerning equivalent definitions of $\BMO.$ 

\begin{theorem}
\label{corr1}
Assume that $\Phi$ is increasing. 
\ben
\item[(i)] If $w\in A_2^d,$ then the sequence $\{c^\Phi_J(w)\}$ is Carleson and
$$
\|\{c^{\Phi}_J(w)\}\|_{\mathcal{C}}\leqslant 8\Phi\big([w]_{A_2^d}\big) \log\big([w]_{A_2^d}\big).
$$
\item[(ii)] 
\label{pii}
If $\{c^\Phi_J(w)\}$ is Carleson and $\ds\lim_{t\to\infty}\Phi(t)=\infty,$ then $w\in A_2^d$ and
$$
[w]_{A_2^d}\leqslant v\big(\|\{c^{\Phi}_J(w)\}\|_{\mathcal{C}}\big),
$$
where $v: [0,\infty)\to [1,\infty)$ is the inverse to the function $\ds u(Q)=\frac8Q\int_1^Q\Phi(t)\,dt, Q\geqslant1.$
\item[(iii)]
If $\ds\lim_{t\to\infty}\Phi(t)\ne\infty,$ then there exists a weight $w\notin A^d_2$ such that $\{c^{\Phi}_J(w)\}$ is Carleson.
\een
\end{theorem}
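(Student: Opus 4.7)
My plan is to handle (i) directly from Theorem~\ref{ttt1}, to reduce (ii) to a pointwise Bellman-type lower estimate, and to produce the counterexample for (iii) as a dyadic multiplicative cascade.

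Part (i) is immediate: since $\av{w}J\av{w^{-1}}J \leq [w]_{A_2^d}$ for every $J\in\mathcal{D}$ and $\Phi$ is increasing, the pointwise bound $c_J^\Phi(w) \leq \Phi([w]_{A_2^d})\,c_J^{(0)}(w)$ gives $\|\{c_J^\Phi(w)\}\|_{\mathcal{C}} \leq \Phi([w]_{A_2^d})\,\|\{c_J^{(0)}(w)\}\|_{\mathcal{C}}$, and the second factor is bounded by $8\log[w]_{A_2^d}$ by the $\alpha=0$ case of Corollary~\ref{ttt3} (equivalently, Theorem~\ref{ttt1}(3) with $\Phi\equiv 1$).

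Part (ii) would follow from the pointwise lower estimate
\[
\frac{1}{|I|}\sum_{J \in \mathcal{D}(I)} c_J^\Phi(w) \;\geq\; u\bigl(\av{w}I\av{w^{-1}}I\bigr),
\]
valid on every $I\in\mathcal{D}$ and every weight $w$. Given this, taking $\sup$ over $I$ yields $u([w]_{A_2^d}) \leq \|\{c_J^\Phi(w)\}\|_{\mathcal{C}}$; since $u(1)=0$ and the ODE $u+Qu'=8\Phi$ shows $u$ is continuous and strictly increasing on $[1,\infty)$, while $u(Q)\to\infty$ whenever $\Phi(Q)\to\infty$, the inverse $v$ is well defined and yields $[w]_{A_2^d}\leq v(\|\{c_J^\Phi(w)\}\|_{\mathcal{C}})$. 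I would prove the pointwise bound by the Bellman-function method: set $V(x,y)=u(xy)$ on $\{xy\geq 1\}$ and verify the concavity-with-obstacle inequality
\[
V(x,y) - \tfrac{1}{2}\bigl[V(x^-,y^-) + V(x^+,y^+)\bigr] \;\leq\; \Phi(xy)\Bigl[\tfrac{(x^- - x^+)^2}{x^2} + \tfrac{(y^- - y^+)^2}{y^2}\Bigr]
\]
for admissible arithmetic splits (midpoints $x$, $y$, and $x^\pm y^\pm \geq 1$). The ODE $u+Qu'=8\Phi$ is exactly what should force this condition, and its verification reduces by Taylor expansion to positivity of a specific quadratic form in $x^- - x^+$ and $y^- - y^+$. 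Multiplying by $|J|$ and telescoping over $\mathcal{D}(I)$, together with Lebesgue differentiation ($V(\av{w}J,\av{w^{-1}}J)\to u(1)=0$ a.e.\ at infinite depth), then produces the pointwise bound.

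For part (iii), since $\Phi$ is increasing but does not tend to infinity, it is bounded by some constant $M$, so $c_J^\Phi(w) \leq M\, c_J^{(0)}(w)$; it suffices to construct a weight $w\notin A_2^d$ for which $\{|J|R_J(w)\}$ is Carleson. A multiplicative Haar cascade $w = \prod_k (1 + \delta_k h_k)$ with the $\delta_k$'s chosen so that the squared sum $\sum_k \delta_k^2$ is uniformly bounded along every branch (yielding the Carleson condition since $R_J \sim \delta_{|J|}^2$) but the products $\av{w}I\av{w^{-1}}I$ are unbounded as $|I|\to 0$ along some branch (for instance by arranging the signed sum $\sum \delta_k$ on that branch to diverge) provides such a weight. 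The principal obstacle will be verifying the concavity-with-obstacle inequality and controlling the passage to the limit in part (ii); (i) is straightforward and (iii) uses a standard construction once the reduction to constant $\Phi$ has been made.
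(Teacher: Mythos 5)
Your Part (i) is correct and is the paper's argument in lightly different clothing: bounding $c_J^\Phi\leqslant\Phi([w]_{A_2^d})\,c_J^{(0)}$ and using the $8\log Q$ estimate for $\{c_J^{(0)}\}$ is exactly the paper's step $c_J^\Phi\leqslant c_J^{\Phi_+}$ with $\Phi_+\equiv\Phi(Q)$ plugged into Theorem~\ref{ttt1}(3). Part (iii) is also sound in outline; the paper instead exhibits an explicit lacunary step weight $w(t)=\sum_k 2^k k^{-2}\chi_{(2^{-k},2^{-k+1})}$, while your Haar cascade with $\sum\delta_k^2<\infty$ but $\sum\delta_k=\infty$ along a branch is a different, equally viable construction; both hinge on the same observation that boundedness of $\Phi$ reduces the question to $\{|J|R_J(w)\}$.

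Part (ii) has a genuine gap. The pointwise lower estimate you want to prove,
\[
\frac1{|I|}\sum_{J\in\mathcal{D}(I)}c_J^\Phi(w)\;\geqslant\;u\big(\av{w}I\av{w^{-1}}I\big),
\]
is simply false. Take $\Phi\equiv1$ on $[1,4]$ (so $u(P)=8(1-1/P)$ there; you can let $\Phi$ tend to $\infty$ farther out to respect the hypothesis of (ii)). On $I=(0,1)$ let $w=2+\sqrt3$ on $(0,\tfrac14)$, $w=2-\sqrt3$ on $(\tfrac14,\tfrac12)$, $w=1$ on $(\tfrac12,1)$. Then $\av{w}I=\av{w^{-1}}I=\tfrac32$, so $\av{w}I\av{w^{-1}}I=\tfrac94$ and $u(\tfrac94)=\tfrac{40}9$. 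But $R_I(w)=\tfrac89$, $R_{I^-}(w)=6$, $R_{I^+}(w)=0$, and $w$ is constant on every deeper dyadic interval, so
\[
\frac1{|I|}\sum_{J\in\mathcal{D}(I)}|J|R_J(w)=\tfrac89+\tfrac12\cdot 6=\tfrac{35}{9}<\tfrac{40}{9}.
\]
The Bellman candidate $V(x,y)=u(xy)$ therefore cannot satisfy the required main inequality; indeed, for $\Phi\equiv1$ the Hessian-plus-obstacle form fails to be negative semidefinite near the boundary: $V_{x_1x_1}+8\Phi/x_1^2=\tfrac{8}{x_1^2}\cdot\tfrac{x_1x_2-2}{x_1x_2}$ has the wrong sign for $x_1x_2<2$, so small normal splits already violate the inequality. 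The ODE $u+Qu'=8\Phi$ you cite corresponds, in the variable $s=\sqrt{x_1x_2}$, to $2\mathcal{A}+s\mathcal{A}'=16f(s)$, which is neither of the two ODEs ($\mathcal{A}'=16f/s$ or $\mathcal{A}''=-16f/s^2$) that actually arise from the kernel condition~\eqref{c2}; so it does not force the concavity-with-obstacle inequality. What is true, and what the paper proves, is the \emph{norm} bound $u([w]_{A_2^d})\leqslant\|\{c_J^\Phi(w)\}\|_{\mathcal{C}}$, obtained by bounding $c_J^\Phi$ below by $c_J^{\Phi_-}$ with $\Phi_-(t)=\Phi(t)\,t/Q$ (so that $h_-(s)=\Phi(s^2)/Q$ is increasing, hence Theorem~\ref{ttt2}(1) applies and gives the sharp constant $k_{\Phi_-}(Q)=\tfrac8Q\int_1^Q\Phi$). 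Finally, your plan does not address the case $w\notin A_2^d$ at all -- it would be automatic if the pointwise bound held, but since it does not, you still need something like the paper's passage to finite dyadic truncations $w^I_n$, which lie in $A_2^d$ and have $\|\{c_J^\Phi(w^I_n)\}\|_{\mathcal{C}}\leqslant\|\{c_J^\Phi(w)\}\|_{\mathcal{C}}$, to obtain the contradiction.
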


The main thrust of this theorem is that the condition $\lim_{t\to\infty}\Phi(t)=\infty$ is necessary and sufficient for the implication ``$\{c^\Phi_J(w)\}$ is Carleson'' $\Rightarrow w\in A_2^d.$ The quantitative estimates are stated so that they work for all increasing $\Phi;$ as such, they are sharp only on the class of all such $\Phi.$ If one has a specific $\Phi$ that falls under one of the cases in both Theorem~\ref{ttt1} and Theorem~\ref{ttt2}, one can, in general, do better by using the estimates from those theorems. In particular, we have the following corollary.

\begin{corollary}
\label{corr2}
~
\ben
\item[(i)]
If $\alpha>0$ and $\{c^{(\alpha)}_J(w)\}$ is a Carleson sequence and, then $w\in A^d_2$ and
$$
[w]_{A^d_2}\leqslant k_\alpha^{-1}\big(\|\{c^{(\alpha)}_J(w)\}\|_{\mathcal{C}}\big),
$$
where $k^{-1}_\alpha: [0,\infty)\to [1,\infty)$ is the inverse to $k_\alpha$ given in Theorem~\ref{ttt4}. This estimate is sharp.
\item[(ii)]
If $\alpha\leqslant 0,$ there exists a weight $w\notin A^d_2$ such that $\{c^{(\alpha)}_J(w)\}$ is a Carleson sequence.
\een
\end{corollary}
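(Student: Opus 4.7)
The plan is to read Corollary~\ref{corr2} as two routine consequences of what is already on the table: part~(i) combines the sharp lower Bellman estimate from Theorem~\ref{ttt2} (specialized in Corollary~\ref{ttt3}) with the characterization Theorem~\ref{corr1}(ii), while part~(ii) is an immediate comparison to Theorem~\ref{corr1}(iii).

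For part~(i), I first note that $\Phi(t)=t^\alpha$ with $\alpha>0$ is increasing and tends to infinity, so Theorem~\ref{corr1}(ii) turns the Carleson hypothesis into membership $w\in A_2^d$. Setting $Q:=[w]_{A_2^d}$, the very definition~\eqref{k_K} gives $k_\alpha(Q)\leqslant\|\{c_J^{(\alpha)}(w)\}\|_{\mathcal{C}}$. To conclude I need $k_\alpha^{-1}$ to be a well-defined monotone function on $[0,\infty)$, which amounts to checking that $k_\alpha:[1,\infty)\to[0,\infty)$ is a strictly increasing continuous bijection. This is read off from the explicit formulas in Corollary~\ref{ttt3}: for $\alpha\geqslant 1$ the formula $k_\alpha(Q)=\frac{8}{\alpha}(Q^\alpha-1)$ is manifestly so, and for $0<\alpha<1$ one computes
$$
\frac{d}{dQ}\bigl(Q^\alpha-Q^{\alpha-1}\bigr)=Q^{\alpha-2}\bigl(\alpha Q+1-\alpha\bigr)>0,
$$
with $k_\alpha(1)=0$ and $k_\alpha(Q)\to\infty$. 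Applying $k_\alpha^{-1}$ to both sides of $k_\alpha(Q)\leqslant\|\{c_J^{(\alpha)}(w)\}\|_{\mathcal{C}}$ yields the stated bound. Sharpness is exactly the statement that the infimum defining $k_\alpha(Q)$ is attained in the limit: the optimizing sequences of weights produced alongside the lower Bellman function in the proof of Theorem~\ref{ttt2} realize $\|\{c_J^{(\alpha)}(w_n)\}\|_{\mathcal{C}}\to k_\alpha(Q)$ at $[w_n]_{A_2^d}\equiv Q$, so $k_\alpha^{-1}$ recovers $Q$ in the limit and no strictly smaller replacement is possible.

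For part~(ii), the key observation is that for $\alpha\leqslant 0$ one has $\Phi(s_J(w)^2)=(\av{w}{J}\av{w^{-1}}{J})^\alpha\leqslant 1$, since $\av{w}{J}\av{w^{-1}}{J}\geqslant 1$ by Cauchy--Schwarz. Hence $c_J^{(\alpha)}(w)\leqslant c_J^{(0)}(w)=|J|R_J(w)$ pointwise in $J$, and it suffices to exhibit a weight $w\notin A_2^d$ for which $\{|J|R_J(w)\}$ is Carleson. This is precisely the content of Theorem~\ref{corr1}(iii) applied to the constant function $\Phi\equiv 1$ (for which $\lim_{t\to\infty}\Phi(t)=1\neq\infty$), which supplies such a $w$; concretely, any $w$ with $\log w$ in dyadic $\BMO$ but $w\notin A_2^d$ works, because $|J|R_J(w)$ is controlled by the squared Haar coefficients of $\log w$ up to a bounded factor. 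The same $w$ then provides the counterexample for every $\alpha\leqslant 0$ by the pointwise domination.

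The only real obstacle I foresee is the strict-monotonicity check on $k_\alpha$ that is needed to make $k_\alpha^{-1}$ well-defined on $[0,\infty)$; once that is in hand, both parts fall out formally from the sharp results and the $\Phi\equiv 1$ counterexample already furnished by Theorem~\ref{corr1}.
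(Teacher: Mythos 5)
Your argument is correct and follows the paper's own route: part~(i) combines Theorem~\ref{corr1}(ii) with the explicit $k_\alpha$ from Corollary~\ref{ttt3} (checking strict monotonicity so that $k_\alpha^{-1}$ is well-defined, which the paper also flags), and part~(ii) reduces to the weight constructed in the proof of Theorem~\ref{corr1}(iii) via the pointwise bound $c_J^{(\alpha)}\leqslant c_J^{(0)}=|J|R_J(w)$ for $\alpha\leqslant 0$. One caveat: your parenthetical claim that ``any $w$ with $\log w$ in dyadic $\BMO$ but $w\notin A_2^d$ works, because $|J|R_J(w)$ is controlled by the squared Haar coefficients of $\log w$'' is not justified and is false in general --- $\Delta_J w/\av{w}J$ is not comparable to $\Delta_J(\log w)$ without further local regularity of $w$, and in fact the two can differ arbitrarily when the distribution of $w$ inside $J^\pm$ is very spread out. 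This remark is inessential to your proof (the explicit lacunary weight from Theorem~\ref{corr1}(iii) already does the job), but it should be removed or qualified.
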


The reader will notice that the sequence $\{c_J^\Phi(w)\}$ does not change if we replace $w$ with $\tau w$ for $\tau>0.$ This zero-degree homogeneity is of central importance in our sharp proofs, but if we are willing to slightly compromise on sharpness, we can easily extend our main theorems to settings with different homogeneity. To illustrate this point, let us examine the sequence
$$
c^{(\alpha,\beta)}_J(w)=|J|\,\av{w}J^\alpha\,\av{w^{-1}}J^\beta\, R_J(w)
$$ 
for $\alpha\ne\beta.$ For this sequence, we can obtain a range of inequalities of the flavor studied in~\cite{BR}.
\begin{corollary}
\label{corr3}
For any $w\in A_2^d$ and any $\alpha,\beta$ such that $\alpha>\beta,$ 
$$
\frac1{|I|}\sum_{J\in\mathcal{D}(I)}c^{(\alpha,\beta)}_J(w)\leqslant e\,K_\alpha\big([w]_{A_2^d}\big)\,\av{w^{\alpha-\beta}}I.
$$
\end{corollary}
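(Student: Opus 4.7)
Set $\gamma := \alpha - \beta > 0$. The plan is to reduce the asymmetric Carleson sum for $\{c^{(\alpha,\beta)}_J(w)\}$ to the symmetric one handled by Theorem~\ref{ttt1}, trading sharpness for a constant factor $e$. I would begin with the factorization
$$c^{(\alpha,\beta)}_J(w) = c^{(\alpha)}_J(w) \cdot \av{w^{-1}}J^{-\gamma}.$$
To control the extra factor, I would apply H\"older's inequality on $J$ with conjugate exponents $p = (\gamma+1)/\gamma$ and $q = \gamma+1$ to the trivial identity $1 = w^{-1/p}\cdot w^{\gamma/q}$. Averaging over $J$ gives $1 \leq \av{w^{-1}}J^{1/p}\av{w^\gamma}J^{1/q}$, which rearranges to the pointwise bound
$$\av{w^{-1}}J^{-\gamma} \leq \av{w^\gamma}J, \qquad\text{hence}\qquad c^{(\alpha,\beta)}_J(w) \leq c^{(\alpha)}_J(w)\,\av{w^\gamma}J.$$

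By Theorem~\ref{ttt1}, $\{c^{(\alpha)}_J(w)\}$ is a Carleson sequence of norm at most $K_\alpha(Q)$, where $Q := [w]_{A_2^d}$. To estimate the weighted sum, I would appeal to the Carleson Lemma~\ref{carleson}: using the pointwise majorization $\av{w^\gamma}J \leq \inf_{x\in J} M_I(w^\gamma)(x)$, where $M_I$ denotes the dyadic maximal function restricted to $I$, the lemma produces
$$\sum_{J\in\mathcal{D}(I)} c^{(\alpha)}_J(w)\,\av{w^\gamma}J \leq K_\alpha(Q) \int_I M_I(w^\gamma).$$
After dividing by $|I|$, the corollary reduces to the one-line estimate $\frac1{|I|}\int_I M_I(w^\gamma) \leq e\,\av{w^\gamma}I$.

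This closing inequality is the main obstacle. It is \emph{not} a consequence of the generic $L^p$-boundedness of $M_I$, whose sharp constant $p/(p-1)$ strictly exceeds $e$ for every finite $p>1$ and blows up at $p=1$; nor does it hold for arbitrary nonnegative functions. Rather, the constant $e$ must be extracted from the $A_\infty$ structure of $w^\gamma$ inherited from $w\in A_2^d$---for instance via a sharp Fujii--Wilson-type estimate---or, alternatively, the intermediate step through $M_I$ should be bypassed in favor of a direct dyadic Bellman-function argument for the weighted sum $\sum c^{(\alpha)}_J(w)\,\av{w^\gamma}J$ that produces the constant $e$ structurally. I expect this final step to be the delicate part of the argument.
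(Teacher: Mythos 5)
Your factorization $c^{(\alpha,\beta)}_J(w) = c^{(\alpha)}_J(w)\,\av{w^{-1}}J^{-\gamma}$ and the appeal to the Carleson Lemma are exactly the paper's strategy, and you correctly diagnose that your version of the final step fails: you have passed too eagerly from $\av{w^{-1}}J^{-\gamma}$ to $\av{w^\gamma}J$, which leaves you trying to prove $\frac1{|I|}\int_I M_I(w^\gamma)\leqslant e\,\av{w^\gamma}I$, an $L^1$-type maximal estimate that is simply false for general nonnegative data. However, your guess that the fix must come from the $A_\infty$ structure of $w$ (Fujii--Wilson, etc.) is not how the paper closes the gap, and in fact the paper's key lemma is stated and proved for \emph{any} weight $w$ with $w^{-1}$ a weight, with no Muckenhoupt hypothesis whatsoever.

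The actual device is to stop your H\"older step halfway. Rather than bounding $\av{w^{-1}}J^{-\gamma}$ by $\av{w^\gamma}J,$ bound it by $\av{w^{1/p}}J^{p\gamma}$ for a large parameter $p>1/\gamma$ (this is the same Jensen/H\"older computation, just applied to the split $1 = w^{-1/(p+1)}w^{1/(p+1)}$ instead of $1 = w^{-\gamma/(\gamma+1)}w^{\gamma/(\gamma+1)}$). Then $\av{w^{1/p}}J^{p\gamma}\leqslant\inf_{x\in J}\big[M(w^{1/p}\chi_I)\big]^{p\gamma}(x),$ so the Carleson Lemma applied with $F=\big[M(w^{1/p}\chi_I)\big]^{p\gamma}$ gives
$$
\frac1{|I|}\sum_{J\in\mathcal{D}(I)}c^{(\alpha)}_J(w)\,\av{w^{-1}}J^{-\gamma}\leqslant K_\alpha(Q)\,\frac1{|I|}\int_{\mathbb R}\big[M(w^{1/p}\chi_I)\big]^{p\gamma}.
$$
Now the exponent $p\gamma$ is strictly greater than $1,$ so the sharp $L^{p\gamma}$ bound $\|M\|_{L^{p\gamma}\to L^{p\gamma}}=\frac{p\gamma}{p\gamma-1}$ applies and the right-hand side is at most $K_\alpha(Q)\big(\tfrac{p\gamma}{p\gamma-1}\big)^{p\gamma}\av{w^\gamma}I.$ The constant $\big(\tfrac{p\gamma}{p\gamma-1}\big)^{p\gamma}$ exceeds $e$ for every finite $p,$ consistent with your observation, but it decreases to $e$ as $p\to\infty,$ and letting $p\to\infty$ finishes the proof. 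In short: the $e$ is not an $A_\infty$ constant but the limit of sharp maximal-function norms $\big(\tfrac{r}{r-1}\big)^{r}$ as $r\to\infty,$ and the auxiliary power $1/p$ is precisely what lets you stay out of the $L^1$ regime.
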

\begin{remark}
Of course, the same conclusion holds if we interchange $w$ and $w^{-1}.$ In addition, if $\alpha-\beta\leqslant1,$ we can replace $\av{w^{\alpha-\beta}}I$ with $\av{w}I^{\alpha-\beta}$ by H\"older's inequality. 
\end{remark}

We now return to Theorems~\ref{ttt1} and~\ref{ttt2}. To explain where they come from, we need to define the Bellman functions for our problem. Fix $Q\geqslant1$ and let
$$
\Omega_Q=\{(x_1,x_2):1\leqslant x_1x_2\leqslant Q\}.
$$ 
For each $x=(x_1,x_2)\in\Omega_Q$ and each $I\in\mathcal{D},$ let 
$$
E_{Q,x,I}=\big\{w:~w\in\AQ(I),~\av{w}I=x_1,~\av{w^{-1}}I=x_2\big\}.
$$
The upper and lower Bellman functions are defined, respectively, by
\eq[6.6n]{
\bel{B}_{Q,\Phi}(x_1,x_2)=\sup\Big\{
\frac1{|I|}\sum_{J\in\mathcal{D}(I)}c^{\Phi}_J(w):~w\in E_{Q,x,I}
\Big\}
} 
and 
\eq[6.7n]{
\bel{b}_{Q,\Phi}(x_1,x_2)=\inf\Big\{
\frac1{|I|}\sum_{J\in\mathcal{D}(I)}c^{\Phi}_J(w):~w\in E_{Q,x,I}
\Big\}.
} 
If $\Phi$ is a power function, $\Phi(t)=t^\alpha,$ we will write $\bel{B}_{Q,\alpha}$ and $\bel{b}_{Q,\alpha}$ for $\bel{B}_{Q,\Phi}$ and $\bel{b}_{Q,\Phi},$ respectively. 

One immediately observes that the functions $\bel{B}_{Q,\Phi}$ and $\bel{b}_{Q,\Phi}$ are independent of the interval $I$ that formally enters into their definitions. In addition, for any $w\in\AQ$ and any $I\in\mathcal{D},$  we have
$(\av{w}I,\av{w^{-1}}I)\in\Omega_Q,$ thus, the functions $\bel{B}_{Q,\Phi}$ and $\bel{b}_{Q,\Phi}$ are defined, at least formally, on $\Omega_Q.$ The fact that $E_{Q,x,I}$ is nonempty for every $Q\geqslant1,$ every interval $I\in\mathcal{D},$ and every $x\in\Omega_Q$ is subsumed in the statements of Theorems~\ref{mbu} and~\ref{mbl} below.

For a number $L\geqslant1$ and a non-negative function $f$ on $[1,\infty)$ formally define:
\eq[701]{
A_{L,f}(s)=16\left[\frac{f(L)}L+\int_1^L\frac{f(z)}{z^2}\,dz\right](s-1)-16\int_1^s\frac{f(z)}{z^2}\,(s-z)\,dz,
}
\eq[702]{
a_f(s)=16\int_1^s\frac{f(z)}z\,dz.
}
For reasons that will soon be made clear, we will often refer to these two functions as {\it Bellman candidates}. We are now in a position to state the main theorems about the functions $\bel{B}_{Q,\Phi}$ and $\bel{b}_{Q,\Phi},$ which are used in the next section to prove Theorems~\ref{ttt1} and~\ref{ttt2}.
\begin{theorem}
\label{mbu}
In the notation~\eqref{fh} and~\eqref{sr},
\ben
\item
If $\Phi$ is increasing and $h$ is convex, then
$$
\bel{B}_{Q,\Phi}(x_1,x_2)=A_{L,f}(\sqrt{x_1x_2}).
$$
\item
If $\Phi$ is increasing and $h$ is concave, then
$$
\bel{B}_{Q,\Phi}(x_1,x_2)\leqslant 8(\sqrt{x_1x_2}-1)\big(4L-\sqrt{x_1x_2}-1\big)\,h\big(s_0(\sqrt{x_1x_2})\big),
$$
where
$$
s_0(s)=\frac{9L^2-s^2-s-1}{3(4L-s-1)}.
$$

\item
If $\Phi$ is decreasing, then
$$
\bel{B}_{Q,\Phi}(x_1,x_2)=a_f(\sqrt{x_1x_2}).
$$
\een
\end{theorem}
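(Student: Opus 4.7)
The plan is to prove Theorem~\ref{mbu} by the standard Bellman function technique: for each of the three regimes, exhibit a candidate function $B$ on $\Omega_Q$ that majorizes the dyadic recursion for $\bel{B}_{Q,\Phi}$, iterate it to bound the global Carleson sums, and (in cases (1) and (3)) verify sharpness by constructing optimizing weights.

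First I would prove the \emph{main inequality} for $\bel{B}_{Q,\Phi}$: for every splitting $x = \tfrac12(x^- + x^+)$ with $x, x^\pm \in \Omega_Q$,
$$
\bel{B}_{Q,\Phi}(x) \,\geq\, f(\sqrt{x_1 x_2})\!\left[\frac{(x_1^- - x_1^+)^2}{x_1^2} + \frac{(x_2^- - x_2^+)^2}{x_2^2}\right] + \tfrac12\bigl[\bel{B}_{Q,\Phi}(x^-) + \bel{B}_{Q,\Phi}(x^+)\bigr].
$$
This is obtained by splicing near-extremal weights on the two halves of $I$ into a single admissible weight on $I$, whose averages and $A_2^d$-characteristic remain in $\Omega_Q$. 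I would also record the boundary identity $\bel{B}_{Q,\Phi}(x_1, 1/x_1) = 0$, since the curve $x_1 x_2 = 1$ forces the weight to be constant.

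The heart of the argument is to show that each candidate $B$ is a supersolution for this inequality. Because every candidate has the form $B(x) = G(\sqrt{x_1 x_2})$, the chain rule converts the infinitesimal Hessian condition $-d^2 B \succeq 8 f(s)\,\mathrm{diag}(x_1^{-2}, x_2^{-2})$ into a scalar ODE of the shape $G''(s) + 16 h(s) \leq 0$ together with a monotonicity condition on $G'$. The functions $A_{L,f}$ and $a_f$ are engineered to meet these with equality at one endpoint; the convexity of $h$ in case (1) (resp.\ monotonicity of $\Phi$ in case (3)) is exactly what promotes the infinitesimal inequality to the finite-difference form of the main inequality. Iterating that inequality $n$ times for any $w \in E_{Q,x,I}$ gives
$$
B(x) \,\geq\, \frac{1}{|I|}\!\!\!\sum_{\substack{J \in \mathcal{D}(I)\\|J| \geq 2^{-n}|I|}}\!\!\! c_J^\Phi(w) \,+\, \sum_{J \in \mathcal{D}_n(I)} \frac{|J|}{|I|}\, B(\av{w}J, \av{w^{-1}}J),
$$
and the tail sum vanishes as $n \to \infty$ because the pairs $(\av{w}J, \av{w^{-1}}J)$ converge a.e.\ to $(w(x), 1/w(x))$ on the boundary $s = 1$, where $B = 0$. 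Taking the supremum over $w$ yields $\bel{B}_{Q,\Phi}(x) \leq B(x)$.

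For parts (1) and (3) I would then construct near-extremizing weights by reversing the iteration: at each scale, pick the splitting that saturates the infinitesimal inequality, dictated by the null direction of the deficit matrix $-d^2 B - 8 f\,\mathrm{diag}(x_i^{-2})$. The main obstacle I expect is part (2): with $h$ concave the ODE trajectory of case (1) no longer produces an admissible candidate, so one must replace $f(s) = s^2 h(s)$ by the tangent-chord majorant $s^2[h(s_0) + h'(s_0)(s - s_0)]$ coming from concavity, solve the resulting linearized problem, and then optimize over the tangent point. The formula $s_0(s) = (9L^2 - s^2 - s - 1)/(3(4L - s - 1))$ should emerge as this optimal tangent point, and the real work is verifying that the product-form candidate $8(s-1)(4L - s - 1)\,h(s_0(s))$ remains a supersolution on all of $\Omega_Q$.
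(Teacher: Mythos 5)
Your overall plan tracks the paper's proof closely: the main inequality and boundary condition (Lemma~\ref{main_le}), the homogenization to a scalar ODE for $\A(s)=B(s,s)$ yielding the two branches $\A'(s)=16f(s)/s$ and $\A''(s)=-16h(s)$ (Section~\ref{induction}), the Bellman induction (Lemma~\ref{le1}), the construction of optimizers from the null direction of the deficit matrix (Section~\ref{optimizers}), and the tangent-line majorization with optimization over $s_0$ for Part~(2) (Section~\ref{concave}). Two refinements are worth noting. First, for the upper bound you do not need the tail sum $\sum_{J\in\mathcal{D}_n(I)}\frac{|J|}{|I|}B(x_J)$ to vanish: since $B\geqslant0$ by the boundary condition and the main inequality, you simply drop it; the vanishing argument is only required for the lower candidate $b$, where the paper invokes boundedness of $\Phi$ plus Lebesgue differentiation. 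Second, the step you describe as automatic --- ``convexity of $h$ promotes the infinitesimal inequality to the finite-difference form'' --- is actually the technical core of the argument, and it is not formal. The paper handles it by replacing the exact discrete form $P(s^-,s^+,s)$ with a deliberately \emph{stronger but simpler} expression ($U$ in the convex case, $V$ in the decreasing case) which agrees with $P$ exactly on the configurations where equality is attained, then reduces to the two extremal configurations $s=(s^-+s^+)/2$ and $s^-=s^+$.

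The one genuine misconception is your closing remark for Part~(2): you say ``the real work is verifying that the product-form candidate $8(s-1)(4L-s-1)h(s_0(s))$ remains a supersolution on all of $\Omega_Q$.'' That is neither what the paper does nor what is needed, and it is unlikely to be true: a pointwise minimum over $s_0$ of supersolutions $A_{L,f_{s_0}}$ need not itself satisfy the finite-difference main inequality. But it is automatically a pointwise upper bound for $\bel{B}_{Q,\Phi}$, because you have already shown $\bel{B}_{Q,\Phi}\leqslant\bel{B}_{Q,\Phi_{s_0}}\leqslant A_{L,f_{s_0}}(\sqrt{x_1x_2})$ for every $s_0$ via Part~(1) applied to the affine $h_{s_0}$. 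Minimizing over $s_0$ completes Part~(2) with no further verification; indeed the paper itself notes that this estimate is not expected to be sharp (Section~\ref{concave}), so no optimizer exists for it and no supersolution property is available or required.
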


\begin{theorem}
\label{mbl}
In the notation \eqref{fh} and~\eqref{sr},
\ben
\item
If $h$ is increasing, then
$$
\bel{b}_{Q,\Phi}(x_1,x_2)=a_f(\sqrt{x_1x_2}).
$$
\item
If $h$ is decreasing, then
$$
\bel{b}_{Q,\Phi}(x_1,Q/x_1)=8\,\Phi(Q)\,\Big(1-\frac1Q\Big).
$$
\een
\end{theorem}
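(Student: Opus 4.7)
My plan is to treat the two parts of Theorem~\ref{mbl} separately, with Part~(i) supplying the key identity from which Part~(ii) follows by a short monotonicity argument. In both cases I will follow the Bellman paradigm: propose a candidate $\mathcal{M}$ on $\Omega_Q$, verify the dynamic-programming inequality
\[
\mathcal{M}(x) \leq \Phi(x_1 x_2)\,R + \tfrac{1}{2}\bigl(\mathcal{M}(x^+) + \mathcal{M}(x^-)\bigr)
\]
for every admissible splitting $x = \tfrac{1}{2}(x^+ + x^-)$ with $x^{\pm}\in\Omega_Q$ (where $R = (x_1^- - x_1^+)^2/x_1^2 + (x_2^- - x_2^+)^2/x_2^2$), iterate down the dyadic tree, let the resulting tail vanish, and conclude $\bel{b}_{Q,\Phi} \geq \mathcal{M}$; then exhibit near-extremizing weights to match.

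For Part~(i) the candidate is $\mathcal{M}(x) = a_f\bigl(\sqrt{x_1 x_2}\bigr)$. Writing $G(u) = a_f(\sqrt{u})$, one computes $G'(u) = 8\Phi(u)/u = 8\,h(\sqrt{u})$ and $G''(u) = 4h'(\sqrt{u})/\sqrt{u}$, so the hypothesis that $h$ is increasing makes $G$ convex and nondecreasing on $[1,Q]$. Setting $\delta_i = \tfrac{1}{2}(x_i^- - x_i^+)$ and $u^{\pm} = x_1^{\pm} x_2^{\pm}$, a direct expansion gives $\tfrac{1}{2}(u^+ + u^-) = u + \delta_1\delta_2$, so convexity of $G$ yields $\tfrac{1}{2}(\mathcal{M}(x^+) + \mathcal{M}(x^-)) \geq G(u + \delta_1\delta_2)$; it therefore suffices to bound $G(u) - G(u + \delta_1\delta_2)$ by $\Phi(u)\,R$. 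When $\delta_1\delta_2 \geq 0$ this difference is nonpositive. When $\delta_1\delta_2 < 0$, I will write it as $\int_{u+\delta_1\delta_2}^{u} 8\Phi(t)/t\, dt$ and use the monotonicity of $\Phi(t)/t = h(\sqrt{t})$ (nondecreasing by hypothesis) to bound it by $8\Phi(u)|\delta_1\delta_2|/u$, which is at most $\Phi(u) R = 4\Phi(u)(\delta_1^2/x_1^2 + \delta_2^2/x_2^2)$ by the AM--GM inequality $\delta_1^2/x_1^2 + \delta_2^2/x_2^2 \geq 2|\delta_1\delta_2|/(x_1 x_2)$.

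Iterating the dynamic-programming inequality then gives
\[
\mathcal{M}(x_I) \leq \frac{1}{|I|}\sum_{J \in \bigcup_{k=0}^{n-1}\mathcal{D}_k(I)} c^{\Phi}_J(w) + \frac{1}{|I|}\sum_{J \in \mathcal{D}_n(I)} |J|\,\mathcal{M}(x_J),
\]
and the tail vanishes as $n \to \infty$ because $\mathcal{M}$ is bounded on $\Omega_Q$, $\mathcal{M}$ vanishes when $x_1 x_2 = 1$, and $s_J \to 1$ at a.e.\ Lebesgue point (dominated convergence). Hence $\bel{b}_{Q,\Phi}(x) \geq a_f(\sqrt{x_1 x_2})$. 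For the matching upper bound I will iterate the \emph{characteristic} splittings $\delta_1 x_2 + \delta_2 x_1 = 0$ with $\delta_1\delta_2 < 0$ (which saturate the AM--GM and convexity steps above to leading order): choosing $\delta_1 = t_n x_1$, $\delta_2 = -t_n x_2$ at level $n$ with $t_n\downarrow 0$ drives $s$ monotonically to $1$, and the aggregated cost is a Riemann sum that converges to $8\int_1^{s_0^2} \Phi(u)/u\, du = a_f(s_0)$.

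Part~(ii) then follows by combining Part~(i) applied to $\Phi_0(t) = t$ (for which $h \equiv 1$ is trivially increasing, so Part~(i) yields $\tfrac{1}{|I|}\sum_J c_J^{\Phi_0}(w) \geq 8(Q-1)$ whenever $\av{w}I\av{w^{-1}}I = Q$) with the pointwise bound $\Phi(s_J^2)/s_J^2 = h(s_J) \geq h(L) = \Phi(Q)/Q$, valid because $h$ is decreasing and $s_J \leq L$ for every $J \in \mathcal{D}(I)$. Summing gives $\tfrac{1}{|I|}\sum_J c_J^{\Phi}(w) \geq (\Phi(Q)/Q)\cdot 8(Q-1) = 8\Phi(Q)(1 - 1/Q)$. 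The matching upper bound is explicit: taking $w = x_1\bigl(1 \mp \sqrt{(Q-1)/Q}\bigr)$ on $I^{\pm}$ makes $c_I^{\Phi}/|I| = 8\Phi(Q)(1 - 1/Q)$ while $c_J^{\Phi} = 0$ for all $J \subsetneq I^{\pm}$. I expect the main obstacle to be the finite-difference form of the dynamic-programming inequality for Part~(i); once the convexity $+$ monotonicity $+$ AM--GM argument above is carefully executed, the iteration, the reduction to $\Phi_0$, and the single-step extremizer for Part~(ii) are essentially routine.
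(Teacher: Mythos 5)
Your proposal is correct, and its overall architecture --- Bellman induction on scales for the lower bound, tangential (characteristic) splits for the extremizer, and the reduction of Part~(ii) to the case $\Phi_0(t)=t$ combined with the single-step weight $w^*$ --- is the same as in the paper. The one genuine point of divergence is how the main inequality
$$
G(u) \leqslant \tfrac12 G(u^-) + \tfrac12 G(u^+) + \Phi(u)\,R
$$
for the lower candidate $G(u)=a_f(\sqrt u)$ is verified. The paper passes to the variables $(s^-,s^+,s)$, introduces the auxiliary function $W\geqslant P$ (dropping the $((s^-)^2-(s^+)^2)^2/s^4$ term from the ``remainder''), and shows $W\leqslant 0$ directly by bounding each one-sided integral $\int_{s^\pm}^s h(r)\,r\,dr$ via the monotonicity of $h$. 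You instead collapse the pair $G(u^-),G(u^+)$ to a single value $G(u+\delta_1\delta_2)$ via the convexity of $G$ (which is exactly $h$ increasing, up to a change of variables), then bound $R$ below by $8|\delta_1\delta_2|/u$ via AM--GM, and finish with a one-dimensional mean-value-type estimate for $G(u)-G(u+\delta_1\delta_2)$. Both are valid and both exploit only the monotonicity of $h$; your version is arguably more transparent because it reduces everything to a one-variable convexity statement and makes it visible where AM--GM and convexity are saturated (namely, on the characteristic direction $\delta_1 x_2+\delta_2 x_1=0$, $\delta_1\delta_2<0$), which is precisely the split your optimizer then uses. Two small technical notes: first, as the paper emphasizes, $\Phi$ and $h$ need not be differentiable, so the computation $G''(u)=4h'(\sqrt u)/\sqrt u$ should be replaced by the observation that $G'(u)=8h(\sqrt u)$ (a.e.) is nondecreasing, which already implies convexity of $G$; second, your optimizer shrinks $s$ multiplicatively ($s\mapsto\sqrt{1-t_n^2}\,s$) rather than additively as in the paper ($s_k^2\mapsto s_k^2-\Delta_n^2$), so it never reaches $s=1$ exactly --- you need to truncate at a level where $s_k$ is close to $1$ and absorb the resulting $O(t_n)$ error, which is routine but should be said.
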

\begin{remark}
Note that in all cases in these theorems, all integrals involved in the definitions of $A_{L,f}$ and $a_f$ are well-defined.

To summarize: in Parts (1) and (3) of Theorem~\ref{mbu} and in Part~(1) of Theorem~\ref{mbl}, we find the exact Bellman functions at every point of $\Omega_Q;$ in Part~(2) of Theorem~\ref{mbu}, we provide a point-wise majorant for $\bel{B}_{Q,\Phi};$ and in Part~(2) of Theorem~\ref{mbl}, we find $\bel{b}_{Q,\Phi}$ only on the boundary curve $x_1x_2=Q$ (which, however, is enough to compute $k_\Phi$ for this case).

\end{remark}

For $\Phi(t)=t^\alpha,$ these formulas become
\begin{corollary}
\label{ttt4}
Let $s=\sqrt{x_1x_2}.$
$$
\begin{array}{ll}
\bel{B}_{Q,\alpha}(x_1,x_2)&=
\begin{cases}
\ds\frac8{\alpha(2\alpha-1)}(1-s^{2\alpha})+\frac{32\alpha}{2\alpha-1}L^{2\alpha-1}(s-1),&
~\alpha\in\big(0,\textstyle{\frac12}\big)\cup\big(\textstyle{\frac12},1\big]\cup\big[\textstyle{\frac32},\infty\big);
\vspace{2mm}

\\
32(s-1)-16\log L+16\,s\,\log\big(\frac Ls\big),&~\alpha=1/2;
\vspace{2mm}

\\
\ds 16\log s,&~\alpha=0;
\vspace{2mm}

\\
\ds\frac8\alpha(s^{2\alpha}-1),&~\alpha<0;
\end{cases}\\
\\
\bel{B}_{Q,\alpha}(x_1,x_2)&\leqslant 
\big[\bel{B}_{Q,1}(x_1,x_2)\big]^{3-2\alpha}\,\big[\bel{B}_{Q,3/2}(x_1,x_2)\big]^{2\alpha-2},\hspace{1.2cm}\alpha\in\big(1,\textstyle{\frac32}\big);
\\
\\
\bel{b}_{Q,\alpha}(x_1,x_2)&=

\ds\frac8\alpha(s^{2\alpha}-1),\hspace{6cm}\alpha\geqslant 1;
\vspace{2mm}

\\
\bel{b}_{Q,\alpha}(x_1,Q/x_1)&=\ds 8\,L^{2\alpha-2}\big(L^2-1\big),\hspace{5.2cm}\alpha<1.
\end{array}
$$
\end{corollary}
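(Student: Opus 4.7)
The plan is to specialize Theorems~\ref{mbu} and~\ref{mbl} to $\Phi(t)=t^\alpha$ and evaluate the resulting integrals. With this choice, $f(s)=s^{2\alpha}$ and $h(s)=s^{2\alpha-2}$, so a direct check of signs of $\Phi'$, $h'$, and $h''$ gives: $\Phi$ is increasing iff $\alpha>0$; $h$ is increasing iff $\alpha>1$; and $h$ is convex on $[1,\infty)$ iff $\alpha\le 1$ or $\alpha\ge 3/2$, concave iff $1\le\alpha\le 3/2$. This partitions the $\alpha$-axis into the regions appearing in the statement and determines which branch of each theorem to invoke.

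For $\alpha\in(0,1/2)\cup(1/2,1]\cup[3/2,\infty)$, Part~(1) of Theorem~\ref{mbu} applies. I would evaluate $\int_1^L z^{2\alpha-2}\,dz=(L^{2\alpha-1}-1)/(2\alpha-1)$ and, by splitting $z^{2\alpha-2}(s-z)=sz^{2\alpha-2}-z^{2\alpha-1}$, $\int_1^s z^{2\alpha-2}(s-z)\,dz=(s^{2\alpha}-s)/(2\alpha-1)-(s^{2\alpha}-1)/(2\alpha)$. Substituting into~\eqref{701} and using the identity $\frac1{2\alpha}-\frac1{2\alpha-1}=-\frac1{2\alpha(2\alpha-1)}$ collapses the expression to
$$
\frac{32\alpha L^{2\alpha-1}(s-1)}{2\alpha-1}+\frac{8(1-s^{2\alpha})}{\alpha(2\alpha-1)},
$$
which is the claimed formula. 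The case $\alpha=1/2$ differs only in that $\int_1^L z^{-1}\,dz=\log L$ and $\int_1^s z^{-1}(s-z)\,dz=s\log s-(s-1)$ replace the power integrals, leading to $32(s-1)-16\log L+16\,s\log(L/s)$. For $\alpha=0$, $\Phi$ is constant, so Part~(3) applies with $f\equiv 1$ and gives $a_f(s)=16\log s$; for $\alpha<0$ the same part yields $a_f(s)=16\int_1^s z^{2\alpha-1}\,dz=\tfrac{8}{\alpha}(s^{2\alpha}-1)$.

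The one slightly non-mechanical step is the range $\alpha\in(1,3/2)$, where Part~(2) of Theorem~\ref{mbu} gives
$$
\bel{B}_{Q,\alpha}(x_1,x_2)\le 8(s-1)(4L-s-1)\,s_0(s)^{2\alpha-2}.
$$
Here I would substitute the explicit formulas $\bel{B}_{Q,1}=8(s-1)(4L-s-1)$ and $\bel{B}_{Q,3/2}=\tfrac{8}{3}(s-1)(9L^2-s^2-s-1)$ (both are immediate simplifications of the Part~(1) formula at these special values of $\alpha$), and use the definition of $s_0(s)$ to rewrite the bound as $[\bel{B}_{Q,1}]^{3-2\alpha}[\bel{B}_{Q,3/2}]^{2\alpha-2}$. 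The verification reduces to exponent bookkeeping: the factors of $(s-1)$, $8$, and the $3$ in the denominator of $s_0$ combine correctly precisely because $(3-2\alpha)+(2\alpha-2)=1$.

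Finally, for the lower Bellman function, Part~(1) of Theorem~\ref{mbl} gives $\bel{b}_{Q,\alpha}=a_f(s)=\tfrac{8}{\alpha}(s^{2\alpha}-1)$ for $\alpha>1$; at $\alpha=1$ both parts of that theorem agree on the diagonal $x_1x_2=Q$ and produce the same value, so the formula for $\alpha\ge 1$ holds. For $\alpha<1$, Part~(2) gives $\bel{b}_{Q,\alpha}(x_1,Q/x_1)=8Q^\alpha(1-1/Q)=8L^{2\alpha-2}(L^2-1)$, matching the statement. I do not anticipate any serious obstacle: after the case analysis, the argument reduces to integration of elementary power and logarithmic functions, with the $(1,3/2)$ factorization being the only piece requiring a brief calculation.
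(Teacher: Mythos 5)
Your proof is correct and takes essentially the same approach the paper intends: specialize Theorems~\ref{mbu} and~\ref{mbl} to $\Phi(t)=t^\alpha$, classify $\alpha$ by the monotonicity and convexity of $\Phi$ and $h$, and evaluate the resulting elementary integrals. The computations all check out, including the exponent bookkeeping in the $(1,3/2)$ interpolation range.

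One small point worth making explicit: the borderline cases $\alpha=0$ and $\alpha=1$ work only because the paper's ``increasing'' and ``decreasing'' are weak (non-decreasing/non-increasing). For $\alpha=0$, $\Phi\equiv 1$ is weakly decreasing so Part~(3) of Theorem~\ref{mbu} applies, but it is also weakly increasing with $h(s)=s^{-2}$ convex, so Part~(1) applies too, and a short check confirms both yield $16\log s$. For $\alpha=1$, $h\equiv 1$ is weakly increasing, so Part~(1) of Theorem~\ref{mbl} applies and gives the formula $\tfrac8\alpha(s^{2\alpha}-1)$ on all of $\Omega_Q$; your remark that Parts~(1) and~(2) of Theorem~\ref{mbl} agree on the boundary curve at $\alpha=1$ is a useful consistency check but does not by itself establish the off-boundary formula -- you need to invoke Part~(1) directly with the weak-monotonicity reading. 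With that clarification the argument is complete.
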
 

Before proceeding with the proofs of these theorems and corollaries, we briefly discuss the principal method at play and make several related remarks.

\subsection{The method}
The Bellman function technique in analysis consists of finding special functions with designated size and convexity properties, to aid in induction-based proofs. In martingale settings, such as ours, the first Bellman-function proofs can be found in the work of Burkholder~\cite{b}. Later, the technique was applied, in a significantly different form and under the current name, to many problems in harmonic analysis by Nazarov, Treil, and Volberg \cite{ntv1,nt,ntv2}, followed by others; the reader can consult the lecture notes~\cite{vol} for details. We also note the work of Os\c{e}kowski on sharp inequalities for martingales~\cite{ose1,ose2}, which implements what can be termed a mixed Burkholder--Bellman method.

An important distinction exists between two kinds of Bellman functions. One kind is the {\it true} Bellman functions, which are defined as solutions of extremal problems such as~\eqref{6.6n} and~\eqref{6.7n}. They provide complete information about the inequality in question, including the structure of optimizers, but may be difficult to compute. The other and much more prevalent kind are {\it Bellman-type} functions, which are useful substitutes with properties similar to those of the true function. These are non-unique and much easier to find; however, they rarely produce completely sharp estimates and never the exact optimizers.
Inequality~\eqref{0.1}, due to Beznosova and  Nazarov--Treil, is a good example of the use of Bellman-type functions. Observe that the bound obtained is sharp in $[w]_{A_2^d},$ but sub-optimal in the numerical factor.

Recent advances in finding true Bellman functions have made them viable as stand-alone objects of study, ahead of the inequalities they imply; that perspective is maintained here. To find the true function, one often needs to solve a PDE (a notable exception is the work of Melas and co-authors~\cite{melas1,melas2,mn}, where one determines the function starting from external information about the optimizers). The functions~$A_{L,f}$ and $a_f$ from~\eqref{701} and~\eqref{702} are two homogenized solutions of such a PDE. Overall, the Bellman-related computations in this paper are somewhat similar to those in Vasyunin's proof of Buckley's inequality~\cite{vas}, though they are much more general. We also incorporate elements from~\cite{ssv} and~\cite{vv}, particularly in the construction of optimizing sequences.

The reader familiar with the method will find in the paper at least two technical novelties that are likely to be useful in other settings. First, in Section~\ref{main_ineq}, when verifying key inequalities for our Bellman candidates (the so-called ``main inequalities''), we prove statements that are both stronger and simpler than those required. That may seem like a dangerous overreach, as true-Bellman proofs are typically very tight. However, the inequalities we need and the inequalities we actually prove are extremized by the same configurations of the variables involved, and the slack we introduce happens away from those configurations. Second, in Section~\ref{optimizers}, when constructing the optimizer for one of the candidates, we only do so for a special selection of points in the domain $\Omega_Q,$ and then use the {\it a priori} continuity of our Bellman functions to get the desired result. Doing so saves us the messy work of constructing the optimizer for every point of the domain.

\subsection{Extensions}
The results presented above suggest several possible generalizations. The first such generalization concerns computing the functions $k_\Phi$ and $K_\Phi$ for an arbitrary $\Phi.$ The main obstacle, of course, is that we do not yet know $K_\Phi$ when $h(s)=\Phi(s^2)/s^2$ is concave. Section~\ref{concave} discusses a possible approach to computing the Bellman function $\bel{B}_{Q,\Phi}$ (and, hence, $K_\Phi$) for such $\Phi.$ It is plausible that once that solution is in hand, and one thus has three different Bellman candidates corresponding to the three parts of Theorem~\ref{mbu}, one can deal with an arbitrary $\Phi$ by gluing those candidates in an appropriate manner at the points where $\Phi$ and/or $h$ change behavior. 

One can ask whether our results have an analog in higher dimensions. Of course, the definition of $\{c_J^\Phi(w)\}$ has to change:  we need to replace expressions such as $\av{w}{J^-}\!\!\!-\av{w}{J^+},$ where $J$ a dyadic interval, with $\av{w}{J}\!\!-\av{w}{P(J)},$ where $J$ is a dyadic cube and $P(J)$ its unique parent. Beyond that, one must adapt the inductive argument at the center of the proofs of Theorems~\ref{mbu} and~\ref{mbl} to the situation where one must keep track of $2^n$ points from the domain $\Omega_Q$ on every step of the induction, as opposed to just two points before. A somewhat similar challenge, but without regard for sharpness, was successfully handled in~\cite{LSSVZ}.

Lastly, one may be interested in studying the sequence $\{c_J^\Phi(w)\}$ when $w$ is an $A_p$ weight with $p\ne2$ or a reverse H\"older weight. It seems likely that when the definition of $c_J^\Phi$ is properly symmetrized, most of our analysis will go through in such settings, though we have not explored that.

\subsection{Linearity of sharp estimates}
Let us turn things around a bit and consider the function $K_\Phi$ as an operator on $\Phi,$ taking in and returning non-negative functions on $[1,\infty).$ For $t\geqslant1,$ let
$$
(T\Phi) (t)= K_\Phi(t).
$$

Directly from its definition, $T$ is sublinear on the cone of all non-negative $\Phi,$ meaning that for any $a_1,a_2\geqslant0,$
$T(a_1\Phi_1+a_2\Phi_2)\leqslant a_1 T\Phi_1+a_2 T\Phi_2.$
However, Theorem~\ref{ttt1} shows that this operator in fact behaves linearly on broad classes of $\Phi.$ For example, for all increasing $\Phi$ such that $h(s)=\Phi(s^2)/s^2$ is concave, $T\Phi$ is given by the linear (in $\Phi$) expression in Part~(1) of Theorem~\ref{ttt1}. That means that if $\Phi_1, \Phi_2$ are two such functions, then $T(\Phi_1+\Phi_2)=T\Phi_1+T\Phi_2,$ which in terms of the original formulation means that to get a sharp estimate for the Carleson norm of $\{c^{(\Phi_1+\Phi_2)}_J\}$ we need simply add the sharp estimates for $\Phi_1$ and $\Phi_2.$ In particular, if $P$ is a polynomial with non-negative coefficients  $a_0,...,a_n,$ then by Corollary~\ref{ttt3}, $K_P(Q)=\sum_{j=0}^n a_j K_j(Q).$ 

Similar observations can be made about the function $k_\Phi.$ What accounts for this linearity phenomenon is the nature of optimizers, i.e., those sequences of weights on which the supremum and infimum are attained (in the limit) in the definitions~\eqref{6.6n} and~\eqref{6.7n} of the functions $\bel{B}_{Q,\Phi}$ and $\bel{b}_{Q,\Phi}.$ As shown in Section~\ref{optimizers} below, these optimizers do not depend on the exact $\Phi$ involved, but only on the differential structure of the candidates $A_{L,f}$ and $a_f.$

\subsection{Outline}
The rest of the paper is organized as follows: in Section~\ref{elementary}, we prove all results stated in this section, except Theorem~\ref{mbu} and Theorem~\ref{mbl}(1); in Section~\ref{induction}, we establish key properties of the Bellman functions $\bel{B}_{Q,\Phi}$ and $\bel{b}_{Q,\Phi}$ and derive the Bellman candidates $A_{L,f}$ and $a_f;$ in Section~\ref{main_ineq}, we verify that $A_{L,f}$ and $a_f$ bound $\bel{B}_{Q,\Phi}$ and $\bel{b}_{Q,\Phi}$ from above or below, as appropriate; Section~\ref{concave} contains the proof of Theorem~\ref{mbu}(2), as well as a brief discussion of that case; finally, in Section~\ref{optimizers}, we present a detailed construction of optimizers for $A_{L,f}$ and $a_f,$ thus finishing the proofs of Theorems~\ref{mbu} and~\ref{mbl}.

\section{Shorter proofs}
\label{elementary}
The order of proofs is as follows: Theorem~\ref{mbl}(2), Theorem~\ref{ttt1}, Theorem~\ref{ttt2}, Theorem~\ref{corr1}, Corollary~\ref{corr2}, and Corollary~\ref{corr3}. 

\begin{proof}[Proof of Theorem~\ref{mbl}, Part (2)] Take any $Q\geqslant1,$ any interval $I\in\mathcal{D},$ and any $w\in\AQ$ such that 
$s^2_I(w)=\av{w}I\av{w^{-1}}I=Q.$ Since the function $h(s)=f(s)/s^2$ is decreasing,
we have
\begin{align}
\label{key}
\frac1{|I|}\sum_{J\in\mathcal{D}(I)}c_J^\Phi(w)&=\frac1{|I|}\sum_{J\in\mathcal{D}(I)} |J|f(s_J(w))R_J(w)\\
\notag&\geqslant\frac{f(L)}{L^2}\,\frac1{|I|}\sum_{J\in\mathcal{D}(I)} |J|s_J^2(w)\,R_J(w)
\geqslant \frac{f(L)}{L^2}\,\bel{b}_{Q,1}\big(\av{w}I,\av{w^{-1}}I\big),
\end{align}
where $\bel{b}_{Q,1}$ is the Bellman function defined by~\eqref{6.7n} with $\Phi(t)=t$ or, equivalently, $f(s)=s^2.$ For this $f,$ $h(s)=1,$ and we can apply the first part of the theorem: $\bel{b}_{Q,1}(x_1,x_2)=8(x_1x_2-1).$ Therefore, returning to the original $\Phi$ and $f,$ we obtain
$$
\frac1{|I|}\sum_{J\in\mathcal{D}(I)}c_J^\Phi(w)\geqslant 8\,\frac{f(L)}{L^2}\,(L^2-1)=8\,\Phi(Q)\Big(1-\frac1Q\Big),
$$
which means that $\bel{b}_{Q,\Phi}(x_1,Q/x_1)\geqslant 8\,\Phi(Q)\big(1-\frac1Q\big).$ 

To prove equality, take any $I$ and any $x_1>0$ and consider the weight 
\eq[w**]{
w^*(t)=\begin{cases}
x_1\Big(1-\sqrt{1-\frac1Q}\Big),&t\in I^-,
\medskip

\\
x_1\Big(1+\sqrt{1-\frac1Q}\Big),&t\in I^+.
\end{cases}
}
We compute: $\av{w^*}I=x_1,$ $\av{(w^*)^{-1}}I=Q/x_1,$ and $R_I(w^*)=8(1-1/Q);$ also note that $w^*$ is constant on each $J\in\mathcal{D}(I^-)\cup\mathcal{D}(I^+).$ Therefore, $[w^*]_{\AQ(I)}=Q$ and
$$
\bel{b}_{Q,\Phi}(x_1,Q/x_1)\leqslant\frac1{|I|}\sum_{J\in\mathcal{D}(I)}c_J^\Phi(w^*)=\frac1{|I|}\,|I| f(s_I(w^*)) R_I(w^*)=8\Phi(Q)\Big(1-\frac1Q\Big).
$$
\end{proof}

\begin{proof}[Proof of Theorem~\ref{ttt1}]
We will derive this theorem as a corollary of Theorem~\ref{mbu}. Take  $Q\geqslant1$ and a $w\in A^d_2$ such that $Q=[w]_{A^d_2}.$ For any $I\in\mathcal{D},$ by the definition of $\bel{B}_{Q,\Phi},$
$$
\frac1{|I|}\sum_{J\in\mathcal{D}(I)}c_J^\Phi(w)\leqslant \bel{B}_{Q,\Phi}\big(\av{w}I,\av{w^{-1}}I\big)\leqslant \sup_{x\in\Omega_Q}\bel{B}_{Q,\Phi}(x_1,x_2).
$$
Therefore, $\|\{c_J^\Phi(w)\}\|_{\mathcal{C}}\leqslant \sup_{x\in\Omega_Q}\bel{B}_{Q,\Phi}(x_1,x_2)$ and, finally,
$$
K_\Phi(Q)=\sup_{[w]_{A_2^d}=Q} \|\{c_J^\Phi(w)\}\|_{\mathcal{C}}\leqslant\sup_{x\in\Omega_Q}\bel{B}_{Q,\Phi}(x_1,x_2).
$$
The right-hand side expressions in all three statements of Theorem~\ref{mbu} attain their maxima when $s=\sqrt{Q}=L.$ 
For Parts~(1) and~(3), this is because both $A_{L,f}$ and, respectively, $a_f$ are increasing functions. For Part~(2), we have
$$
8(s-1)\big(4L-s-1\big)\,h(s_0(s))=\frac{8(s-1)(4L-s-1)^3}{9(9L^2-s^2-s-1)^2}\,f(s_0(s)).
$$
It is easy to verify that the fraction in front of $f$ is increasing in $s,$ as is the function $s_0;$ on the other hand, $f$ is increasing by assumption. 

Therefore, we obtain the three statements of Theorem~\ref{ttt1} --- but with ``$\leqslant$'' instead of ``$=$'' in Parts~(1) and~(3) --- by setting $s=\sqrt{Q}$ in the three respective statements of Theorem~\ref{mbu} and changing the variable in the integrals.

To prove that the first and third statements hold with equality, note that by the definition of~$\bel{B}_{Q,\Phi},$ for any $Q\geqslant1,$ any $x_1>0$ and any interval $I,$ there exists a sequence of $A_2^d(I)$-weights $\{w_n\},$ such that for each $n$ $[w_n]_{A_2^d(I)}=Q,$ $\av{w_n}I=x_1,$ $\av{w_n}I\av{w_n^{-1}}I=Q,$ and
$$
\lim_{n\to\infty}\frac1{|I|}\sum_{J\in\mathcal{D}(I)}c_J^\Phi(w_n)=\bel{B}_{Q,\Phi}(x_1,Q/x_1).
$$  
Let us extend each $w_n$ to all of $\mathbb{R}$ periodically by replicating it on each dyadic interval of length $|I|;$ keep the name $w_n$ for the extension. Clearly, $[w_n]_{A^d_2}=[w_n]_{A^d_2(I)},$ which means that the left-hand side does not exceed $K_\Phi(Q).$ On the other hand, by Theorem~\ref{mbu} the right-hand side equals $\A_{L,f}(\sqrt Q)$ in Part~(1) and $a_f(\sqrt Q)$ in Part~(3). The proof is complete.
\end{proof}

\begin{proof}[Proof Theorem~\ref{ttt2}]
We will assume the statements of Theorem~\ref{mbl}, Lemma~\ref{lopt_af}, and Remark~\ref{rem_elem}. The last two come from Section~\ref{optimizers}, which itself is self-contained.
\medskip

\noindent {\it Proof of Part (1).} Take any $Q\geqslant1$ and any $w\in A_2^d$ such that $[w]_{A_2^d}=Q.$ There exists a sequence of intervals $\{I_n\}$ such that $\av{w}{I_n}\av{w^{-1}}{I_n}\to Q$ as $n\to\infty.$ By the definition of $\bel{b}_{Q,\Phi},$
$$
\frac1{|I_n|}\sum_{J\in\mathcal{D}(I_n)}c_J^\Phi(w)\geqslant \bel{b}_{Q,\Phi}(\av{w}{I_n},\av{w^{-1}}{I_n})=a_f\Big(\sqrt{\av{w}{I_n}\av{w^{-1}}{I_n}}\Big).
$$
Therefore,
$$
\|\{c_J^\Phi\}\|_{\mathcal{C}}\geqslant \lim_{n\to\infty} a_f\Big(\sqrt{\av{w}{I_n}\av{w^{-1}}{I_n}}\Big)=a_f\big(\sqrt Q\big),
$$
since $a_f$ is continuous on $[1,\infty).$ Taking the infimum over all $w$ with $[w]_{A_2^d}=Q$ gives
$$
k_\Phi(Q)\geqslant a_f \big(\sqrt Q\big).
$$
To prove the converse inequality, we 
use the sequence $\{w_n^{(L,L)}\}$ of weights on $(0,1)$ constructed in Section~\ref{optimizers}. That sequence is given by~\eqref{opt1.1} and~\eqref{opt1.2} with $s_0=L;$ let us call it simply $\{w_n\}$ here. Let $I=(0,1).$ Lemma~\ref{lopt_af} asserts that $[w_n]_{A_2^d(I)}=Q$ for all $n$ and that 
$$
\lim_{n\to\infty}\sum_{J\in\mathcal{D}(I)}c_J^\Phi(w_n)=a_f\big(\sqrt Q\big).
$$
In addition, from Remark~\ref{rem_elem}, 
$$
\sup_{R\in\mathcal{D}(I)}\frac1{|R|}\sum_{J\in\mathcal{D}(R)}c_J^\Phi(w_n)\leqslant \sum_{J\in\mathcal{D}(I)}c_J^\Phi(w_n).
$$
Extend each $w_n$ periodically to all of $\mathbb{R}.$ Clearly, for each $n,$ $[w_n]_{A_2^d}=Q$ and 
$$
\|\{c_J^\Phi(w_n)\}\|_{\mathcal{C}}=\sum_{J\in\mathcal{D}(I)}c_J^\Phi(w_n).
$$
The left-hand side is no less than $k_\Phi(Q),$ while the right-hand side converges to $a_f(\sqrt Q).$ After changing the variable in the integral $\int_1^Lf(z)/z\,dz,$ we obtain the statement in Part~(1) of the theorem.
\medskip

\noindent {\it Proof of Part (2).} The proof proceeds along the lines of that of Part~(1), but also uses key ingredients from the proof of Theorem~\ref{mbl}(2). First, take $Q\geqslant 1$ and any $w\in A_2^d$ such that $[w]_{A_2^d}=Q.$ There exists a sequence of dyadic intervals $\{I_n\}$ such that $\av{w}{I_n}\av{w^{-1}}{I_n}\to Q.$  Now, as in~\eqref{key},
\begin{align*}
\|\{c_J^\Phi(w)\}\|_{\mathcal{C}}&\geqslant\frac1{|I_n|}\sum_{J\in\mathcal{D}(I_n)}c_J^\Phi(w)=\frac1{|I_n|}\sum_{J\in\mathcal{D}(I_n)} |J|f(s_J(w))R_J(w)\\
&\geqslant\frac{f(L)}{L^2}\,\frac1{|I_n|}\sum_{J\in\mathcal{D}(I_n)} |J|s_J^2(w)\,R_J(w)
\geqslant \frac{f(L)}{L^2}\,\bel{b}_{Q,1}\big(\av{w}{I_n},\av{w^{-1}}{I_n}\big)\\
&=8\frac{f(L)}{L^2}\,\Big(\av{w}{I_n}\av{w^{-1}}{I_n}-1\Big)\xrightarrow[n\to\infty]{} 8\Phi(Q)\Big(1-\frac1Q\Big).
\end{align*}
Therefore, $k_\Phi(Q)\geqslant8\Phi(Q)\big(1-\frac1Q\big).$ To establish the converse, take an interval $I$ and let $w^*$ be the weight on $I$ defined by~\eqref{w**}. Extend $w^*$ to all of $\mathbb{R}$ as in Part~(1). Then $[w^*]_{A_2^d}=Q$ and 
$$
\|\{c_J^\Phi(w)\}\|_{\mathcal{C}}=\frac1{|I|}\sum_{J\in\mathcal{D}(I)}c_J^\Phi(w^*)=8\Phi(Q)\Big(1-\frac1Q\Big).
$$
The left-hand side is no smaller than $k_\Phi(Q),$ hence the proof is complete.
\end{proof}

\begin{proof}[Proof of Theorem~\ref{corr1}]
We first obtain estimates on $K_\Phi$ and $k_\Phi$ that work for all increasing $\Phi.$ Assume that $w\in A_2^d$ and let $[w]_{A_2^d}=Q.$ For all $s\in[1,L],$ 
$f(s)\,\frac{s^2}{L^2}  \leqslant f(s) \leqslant f(L).$ Therefore, for $c_J^\Phi(w)=|J|f(s_J(w))R_J(w)$ we get
$$
c_J^{\Phi_-}(w)\leqslant c_J^\Phi(w) \leqslant c_J^{\Phi_+}(w),
$$
where we have set $\Phi_-(t)=\Phi(t)\,\frac tQ$ and $\Phi_+(t)=\Phi(Q).$ Hence,
$$
k_{\Phi_-}(Q)\leqslant k_\Phi(Q),\qquad K_\Phi(Q) \leqslant K_{\Phi_+}(Q).
$$ 
The function $\Phi_-(t)/t=\Phi(t)/Q$ is increasing, so by Theorem~\ref{ttt2}(1)
$$
k_{\Phi_-}(Q)=8\,\int_1^Q\frac{\Phi_-(t)}t\,dt=\frac8Q\,\int_1^Q\Phi(t)\,dt.
$$
On the other hand, $\Phi_+$ is decreasing, so by Theorem~\ref{ttt1}(3)
$$
K_{\Phi_+}(Q)=8\int_1^Q \frac{\Phi_+(t)}t\,dt=8\Phi(Q)\,\log Q.
$$

This already proves part~(i). To prove (ii), we observe that if $\Phi(t)\to\infty$ as $t\to\infty$ (write $\Phi(\infty)=\infty$), then the function $k_{\Phi_-}$ defined above is strictly increasing and $k_{\Phi_-}(\infty)=\infty.$ Therefore, $k_{\Phi_-}$ is invertible on $[1,\infty).$

Now, assume that $\{c^\Phi_J(w)\}$ is Carleson, but $w\notin A_2^d.$ Take $I\in\mathcal{D}$ and let $w^I=w\,\chi^{}_I.$ For $n\geqslant0,$ let $w^I_n$ be the truncation of $w^I$ at the $n$-th dyadic generation:
$$
w^I_n=\sum_{J\in\mathcal{D}_n(I)}\av{w}J\chi^{}_J.
$$
Extend $w_n^I$ periodically to all of $\mathbb{R};$ still call it $w_n^I.$ Observe that $w_n^I\in A^d_2$ and $\|c^{\Phi}_J(w^I_n)\|_{\mathcal{C}}\leqslant \|c^{\Phi}_J(w)\|_{\mathcal{C}}.$ Therefore, 
\eq[cont]{
k_{\Phi_-}\big([w^I_n]_{A^d_2}\big)\leqslant \|c^{\Phi}_J(w)\|_{\mathcal{C}}.
}

Since $w\notin A^d_2,$ $\sup_{I\in\mathcal{D}}\big(\lim_{n\to\infty}[w^I_n]_{A^d_2}\big)=\infty.$ Since $k_{\Phi_-}(\infty)=\infty,$ if we take first the limit in~\eqref{cont} as $n\to\infty$ and then, if still necessary, the supremum over all $I,$ we obtain a contradiction. Therefore, $w\in A_2^d;$ furthermore, $k_{\Phi_-}([w]_{A_2^d})\leqslant \|\{c_J^\Phi(w)\}\|_{\mathcal{C}}.$ Inverting $k_{\Phi_-}$ (called $u$ in the statement of the theorem) completes the proof of part~(ii).

To prove~(iii), first observe that if $\Phi(\infty)\ne\infty,$ then
$$
\|\{c_J^\Phi(w)\}\|_{\mathcal{C}}\leqslant \Phi(\infty)\,\|\{|J|R_J(w)\}\|_{\mathcal{C}}.
$$
We will now present an explicit weight $w\notin A_2^d$ such that the sequence $\{|J|R_J(w)\}$ is Carleson. For $t\in(0,1),$ let
$$
w(t)=\sum_{k=1}^\infty 2^k k^{-2}\cdot \chi^{}_{(2^{-k},2^{-k+1})}(t).
$$
Now, extend $w$ periodically to $\mathbb{R}.$ For $n\geqslant0,$ let $I_n=(0,2^{-n}).$ We compute 
$$
\av{w}{I_n}=2^n\sum_{k=n+1}^\infty k^{-2}\geqslant 2^n\,n^{-1},\qquad \av{w^{-1}}{I_n}=2^n\sum_{k=n+1}^\infty k^2 4^{-k}\geqslant \frac13\,n^2\,2^{-n}.
$$
Therefore, $\av{w}{I_n}\av{w^{-1}}{I_n}\geqslant n/3,$ and so $w\notin A^d_2.$ 

Now, clearly
$$
\|\{|J|R_J(w)\}\|_{\mathcal{C}}=\sup_{I\in\mathcal{D}(0,1)}\frac1{|I|}\sum_{J\in\mathcal{D}(I)} |J|R_J(w).
$$
Observe, that the only intervals in $\mathcal{D}(0,1)$ on which $w$ is not constant --- and, thus, on which $\sum_{J\in\mathcal{D}(I)}|J|R_J(w)\ne0$ --- are the intervals $I_n$ for $n\geqslant0.$ For each $n$ we have
$$
\frac1{|I_n|}\sum_{J\in\mathcal{D}(I_n)} |J|R_J(w)=\frac1{|I_n|}\sum_{k=n}^\infty |I_k|R_{I_k}(w).
$$
For any weight $w$ and any interval $J,$ $\av{w}{J^\pm}\leqslant 2\av{w}J,$ so $R_J(w)=\big(\av{w}{J^-}-\av{w}{J^+}\big)^2/\av{w}{J}^2+\big(\av{w^{-1}}{J^-}-\av{w^{-1}}{J^+}\big)^2/\av{w^{-1}}{J}^2\leqslant8.$
We conclude that
$$
\|\{|J|R_J(w)\}\|_{\mathcal{C}}=\sup_n\frac1{|I_n|}\sum_{J\in\mathcal{D}(I_n)} |J|R_J(w)\leqslant 8\cdot 2^n\sum_{k=n}^\infty2^{-k}=16,
$$
finishing the proof.
 \end{proof}

\begin{proof}[Proof of Corollary~\ref{corr2}] The only part of this corollary not contained in Theorem~\ref{corr2} is that for $\Phi(t)=t^\alpha,$ Corollary~\ref{ttt3} gives exact expressions for $k_\alpha$ and we do not need the inequality $k_\Phi(Q)\geqslant \frac8Q\int_1^Q\Phi(t)\,dt.$ Observe that $k_\alpha$ is indeed invertible for all $\alpha>0.$
\end{proof}

\begin{proof}[Proof of Corollary~\ref{corr3}]
The statement of the corollary is an immediate consequence of the following lemma. This lemma extends the results of Beznosova~\cite{Bez}, who proved it with $\gamma=1$ and $4$ in place of $e,$ and Moraes, who proved it in his thesis (also see~\cite{bmp}) for all $\gamma\ge0,$ but still with the constant 4. The proof given works in any dimension, though we need it only in dimension~1.

\begin{lemma}
\label{leb}
Let $w$ be a weight such that $w^{-1}$ is also a weight. Assume $\{c_J\}_{J\in \mathcal{D}}$ is a Carleson sequence with norm $B.$ Then for any $\gamma\geqslant 0$ and each $I\in\mathcal{D},$ 
$$
\frac1{|I|}\sum_{J\in\mathcal{D}(I)} c_J\,\frac1{\av{w^{-1}}J^\gamma}\leqslant e\,B\av{w^\gamma}I.
$$
\end{lemma}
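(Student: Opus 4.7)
The plan is to combine the Carleson lemma (Lemma~\ref{carleson}) with a Bellman-function estimate in the spirit of Beznosova's original construction. Since $\gamma=0$ reduces to the defining Carleson condition $\frac{1}{|I|}\sum c_J\leqslant B\leqslant eB$, I assume $\gamma>0$ throughout.

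The first step is a reduction via the Carleson lemma. Introduce the \emph{dyadic minimum}
$$
m(x)=\inf\big\{\av{w^{-1}}J:J\in\mathcal{D}(I),\,J\ni x\big\},\quad x\in I.
$$
Since $m(x)\leqslant\av{w^{-1}}J$ whenever $x\in J\subseteq I$, the extended function $F=m^{-\gamma}\chi^{}_I$ satisfies $\inf_{x\in J}F(x)\geqslant\av{w^{-1}}J^{-\gamma}$ for every $J\in\mathcal{D}(I)$ and $\inf_{x\in J}F(x)=0$ for $J\not\subseteq I$. Applying Lemma~\ref{carleson} to $F$ yields
$$
\frac{1}{|I|}\sum_{J\in\mathcal{D}(I)}c_J\,\av{w^{-1}}J^{-\gamma}\leqslant\frac{B}{|I|}\int_I m(x)^{-\gamma}\,dx,
$$
so the lemma reduces to the pointwise--integral inequality
$$
\int_I m(x)^{-\gamma}\,dx\leqslant e\int_I w(x)^\gamma\,dx.
$$

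For this reduced inequality I would set up a Bellman function $\Phi(\xi,\eta)$ on the Jensen domain $\{(\xi,\eta):\xi\eta^\gamma\geqslant 1\}$, with $\xi=\av{w^\gamma}K$ and $\eta=\av{w^{-1}}K$ for dyadic subintervals $K\subseteq I$. The required properties are the bounds $\eta^{-\gamma}\leqslant\Phi(\xi,\eta)\leqslant e\xi$ and the midpoint concavity
$$
\Phi\Big(\tfrac{\xi^-+\xi^+}{2},\tfrac{\eta^-+\eta^+}{2}\Big)\geqslant\tfrac{1}{2}\big[\Phi(\xi^-,\eta^-)+\Phi(\xi^+,\eta^+)\big]
$$
for every admissible pair of children $(\xi^\pm,\eta^\pm)$ in the domain. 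Iterating the concavity down the dyadic tree of $I$, telescoping the resulting inequalities, and passing to the limit using $m^{-\gamma}(x)=\sup_{J\ni x}\av{w^{-1}}J^{-\gamma}$ and monotone convergence then yields the desired $\int m^{-\gamma}\leqslant e\int w^\gamma$.

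The main technical obstacle is the construction and sharp-constant verification of $\Phi$. The Jensen boundary $\xi\eta^\gamma=1$ is non-convex, and the dyadic children are strongly coupled through both the arithmetic-mean averaging and the Jensen constraint $\xi^\pm(\eta^\pm)^\gamma\geqslant 1$, so that an adversary has limited freedom in each split. A natural exponential-type candidate $\Phi(\xi,\eta)=e\xi\big(1-c\,\eta^{-\gamma}/\xi\big)$ with a free parameter $c$ saturates the main inequality on balanced configurations, and optimizing $c$ across all admissible splittings produces the constant $e$ --- an improvement over the constant $4$ arising in~\cite{Bez} and the thesis of Moraes from less refined Bellman constructions.
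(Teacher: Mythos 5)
Your first step is a clean and legitimate reduction. Applying Lemma~\ref{carleson} with $F=m^{-\gamma}\chi_I$, where $m(x)=\inf\{\av{w^{-1}}J:J\in\mathcal{D}(I),\,J\ni x\}$, is in fact the \emph{optimal} choice of test function: any $F$ admissible in the Carleson lemma for this sum must satisfy $F\geqslant m^{-\gamma}$ a.e.\ on $I$. So the lemma is indeed equivalent to the inequality $\int_I m^{-\gamma}\leqslant e\int_I w^\gamma$. The paper does not isolate this equivalence; it chooses a larger but more convenient $F$ directly.

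The gap is in the second step. The function $m$ is a maximal-type object: it records an extremum over the entire chain of dyadic ancestors of each point inside $I$. A Bellman function of only the two local variables $\xi=\av{w^\gamma}K$, $\eta=\av{w^{-1}}K$ cannot see this structure. If you iterate the midpoint concavity of your $\Phi$ down to generation $n$ and use $\Phi(\xi,\eta)\geqslant\eta^{-\gamma}$ at the leaves, what you obtain is a bound on
$$
\frac1{|I|}\sum_{J\in\mathcal{D}_n(I)}|J|\,\av{w^{-1}}J^{-\gamma},
$$
and as $n\to\infty$ the Lebesgue differentiation theorem sends this to $\av{w^\gamma}I$, \emph{not} to $\av{m^{-\gamma}}I$; you recover the trivial inequality $e\av{w^\gamma}I\geqslant\av{w^\gamma}I$. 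The "monotone convergence to $m^{-\gamma}$" step you invoke does not take place: midpoint concavity telescopes through \emph{averages}, whereas $m^{-\gamma}$ is a pointwise \emph{supremum} of $\av{w^{-1}}J^{-\gamma}$ over ancestors. Bellman proofs of maximal-function inequalities require a third variable (a running maximum/minimum along the dyadic chain), in the manner of Nazarov--Treil, and the candidate $\Phi(\xi,\eta)=e\xi-ec\,\eta^{-\gamma}$ does not provide one. Also note that for $\gamma<1$ the candidate $\Phi$ is concave, but there is nothing in the main inequality chain that produces the sup.

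For comparison: the paper avoids the reduction entirely. It bounds $\av{w^{-1}}J^{-\gamma}\leqslant\av{w^{1/p}}J^{p\gamma}\leqslant\inf_{x\in J}\big[M(w^{1/p}\chi_I)\big]^{p\gamma}(x)$ (the first step is H\"older applied to $1=w^{-1/(1+p)}w^{1/(1+p)}$), feeds $F=\big[M(w^{1/p}\chi_I)\big]^{p\gamma}$ directly into Lemma~\ref{carleson}, uses the $L^{p\gamma}$-boundedness of the dyadic maximal function with norm $\frac{p\gamma}{p\gamma-1}$, and lets $p\to\infty$ so that $\big(\frac{p\gamma}{p\gamma-1}\big)^{p\gamma}\to e$. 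In effect, the paper proves your reduced inequality in passing by dominating $m^{-\gamma}$ by $\big[M(w^{1/p}\chi_I)\big]^{p\gamma}$. If you want to salvage your outline, that is the estimate to substitute for the Bellman construction; as written, the Bellman part does not close.
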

\begin{proof}
The case $\gamma=0$ is obvious, so assume $\gamma>0.$ Observe that for any interval $I,$ any $J\in D(I),$ and any $p>1/\gamma,$
$$
\av{w^{-1}}J^{-\gamma}\leqslant\av{w^{1/p}}J^{\,p\gamma}\leqslant \inf_{x\in J} [M(w^{1/p}\chi^{}_I)]^{p\gamma}(x).
$$
Here $M$ is the dyadic maximal function, $M\varphi(x)\df \sup_{x\in R\in\mathcal{D}}\av{|\varphi|}R.$ Recall that for $p>1$ the norm of $M$ on $L^p(\mathbb{R}^n)$ equals $p/(p-1).$

Now, using Carleson Lemma~\ref{carleson} with $F(x)=[M(w^{1/p}\chi^{}_I)]^{p\gamma}$ gives
\begin{align*}
\frac1{|I|}\sum_{J\in D(I)}c_J\, \frac1{\av{w^{-1}}J^\gamma}&\leqslant \frac1{|I|}\sum_{J\in D(I)}c_J\, \inf_{x\in J} [M(w^{1/p}\chi^{}_I)]^{p\gamma}(x)
\leqslant B \frac1{|I|} \int_{\mathbb R}[M(w^{1/p}\chi^{}_I)]^{p\gamma}(x)\,dx\\
&\leqslant B\,\Big(\frac {p\gamma}{p\gamma-1}\Big)^{p\gamma}\frac1{|I|} \|w^{1/p}\chi^{}_I\|^{p\gamma}_{L^{p\gamma}(\mathbb{R})} = B\,\Big(\frac{p\gamma}{p\gamma-1}\Big)^{p\gamma}\,\av{w^\gamma}I.
\end{align*}
Taking the limit as $p\to\infty,$ we obtain the statement of the lemma.
\end{proof}
To prove the corollary, write $\gamma=\alpha-\beta.$ Then
$$
c^{(\alpha,\beta)}_J(w)=|J|\,\av{w}J^\alpha\,\av{w^{-1}}J^\beta\, R_J(w)=|J|\,\big(\av{w}J\,\av{w^{-1}}J\big)^\alpha \av{w^{-1}}J^\gamma\, R_J(w)
=c^{(\alpha)}_J(w)\,\av{w^{-1}}J^{-\gamma}.
$$
Since $\{c^{(\alpha)}_J(w)\}$ is a Carleson sequence with norm not exceeding $K_\alpha([w]_{A^d_2}),$ the result follows from Lemma~\ref{leb}.
\end{proof}

\section{Necessary conditions on Bellman candidates; induction on scales}
\label{induction}
When seeking the Bellman function for an inequality, one typically determines key conditions that this function must satisfy directly from its definition and then derives (or otherwise presents) a {\it candidate} function that possess these properties. The candidate is then shown to equal the true Bellman function -- or at least bound it from above or below, as appropriate. The actual proof of the inequality consists of revealing the exact relationship between the Bellman candidate and the Bellman function and, technically, does not require one to know where the candidate comes from. However, the process of constructing a candidate both illustrates the main inductive component of the proof and is intrinsically tied with the construction of optimizers in the original inequality. Accordingly, we present it in some detail.

The following lemma lists three key properties of $\bel{B}_{Q,\Phi}$ and $\bel{b}_{Q,\Phi}$ that follow from definitions~\eqref{6.6n} and~\eqref{6.7n}.

\begin{lemma}
\label{main_le}
The functions $\bel{B}_{Q,\Phi}$ and $\bel{b}_{Q,\Phi}$ satisfy
\ben
\item{\bf Main inequality.} For all points $x^-,x^+\in\Omega_Q$ such that $(x^-+x^+)/2\in\Omega_Q,$ 
\begin{align}
\label{a1}
\bel{B}_{Q,\Phi}\Big(\frac{x^-+x^+}2\Big)&\geqslant\frac12\bel{B}_{Q,\Phi}(x^-)+\frac12\bel{B}_{Q,\Phi}(x^+)+\Phi(x_1x_2)
\left[\frac{(x_1^--x_1^+)^2}{x_1^2}+\frac{(x_2^--x_2^+)^2}{x_2^2}\right],\medskip\\
\label{a2}
\bel{b}_{Q,\Phi}\Big(\frac{x^-+x^+}2\Big)&\leqslant\frac12\bel{b}_{Q,\Phi}(x^-)+\frac12\bel{b}_{Q,\Phi}(x^+)+\Phi(x_1x_2)
\left[\frac{(x_1^--x_1^+)^2}{x_1^2}+\frac{(x_2^--x_2^+)^2}{x_2^2}\right].
\end{align}
\item{\bf Boundary condition.} For all $x_1\in(0,\infty),$
\eq[bcond]{
\bel{B}_{Q,\Phi}(x_1,1/x_1)=0,\quad \bel{b}_{Q,\Phi}(x_1,1/x_1)=0.
}
\item{\bf Homogeneity.}
For all $x\in\Omega_Q,$
\eq[hom]{
\bel{B}_{Q,\Phi}(x_1,x_2)=\bel{B}_{Q,\Phi}(\sqrt{x_1x_2},\sqrt{x_1x_2}),\quad \bel{b}_{Q,\Phi}(x_1,x_2)=\bel{b}_{Q,\Phi}(\sqrt{x_1x_2},\sqrt{x_1x_2}).
}
\een
\end{lemma}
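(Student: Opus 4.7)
\medskip
\noindent\textbf{Proof proposal for Lemma \ref{main_le}.}

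The plan is to verify each of the three properties directly from the definitions~\eqref{6.6n} and~\eqref{6.7n}, in the order (2), (3), (1), since the first two are quick observations while (1) is the core inductive statement.

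For the boundary condition (2), fix $x_1 > 0$ and note that by the Cauchy--Schwarz inequality applied to $w^{1/2}\cdot w^{-1/2}$ on $I,$ we have $\av{w}I\av{w^{-1}}I\geqslant1$ with equality if and only if $w$ is (almost everywhere) constant on $I.$ So the only competitor in $E_{Q,(x_1,1/x_1),I}$ is the constant weight $w\equiv x_1,$ for which every $\Delta_J w$ and $\Delta_J w^{-1}$ vanishes, hence $c_J^\Phi(w)=0$ for all $J\in\mathcal{D}(I).$ This gives both vanishing statements in~\eqref{bcond}.

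For the homogeneity (3), I would exploit the zero-degree homogeneity of $c_J^\Phi(w)$ under the scaling $w\mapsto\tau w,$ $\tau>0,$ which is immediate from~\eqref{3g}: both $\av{w}J\av{w^{-1}}J$ and the bracketed expression are invariant under this rescaling, and so is $[w]_{A_2^d(I)}.$ Choosing $\tau=\sqrt{x_2/x_1}$ sends $(x_1,x_2)$ to $(\sqrt{x_1x_2},\sqrt{x_1x_2})$ while setting up a bijection between $E_{Q,(x_1,x_2),I}$ and $E_{Q,(\sqrt{x_1x_2},\sqrt{x_1x_2}),I}$ that preserves the Carleson sum term-by-term. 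Taking supremum (respectively infimum) over each family gives~\eqref{hom}.

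The main inequality (1) is the heart of the lemma. Fix $x^\pm\in\Omega_Q$ with midpoint $x=(x^-+x^+)/2\in\Omega_Q$ and any dyadic $I.$ Given $\varepsilon>0,$ choose weights $w^\pm\in E_{Q,x^\pm,I^\pm}$ whose Carleson averages over $I^\pm$ come within $\varepsilon$ of $\bel{B}_{Q,\Phi}(x^\pm).$ Glue them: $w=w^-\chi^{}_{I^-}+w^+\chi^{}_{I^+}.$ Then $\av{w}I=x_1,$ $\av{w^{-1}}I=x_2,$ and on every $J\in\mathcal{D}(I^\pm)$ the local $A_2^d$-characteristic coincides with that of $w^\pm,$ so is at most $Q;$ on $J=I$ itself, $\av{w}I\av{w^{-1}}I=x_1x_2\leqslant Q$ since $x\in\Omega_Q.$ Hence $w\in E_{Q,x,I}.$ Splitting the sum,
\begin{equation*}
\frac1{|I|}\sum_{J\in\mathcal{D}(I)}c_J^\Phi(w)
=\frac{c_I^\Phi(w)}{|I|}+\frac12\,\frac1{|I^-|}\sum_{J\in\mathcal{D}(I^-)}c_J^\Phi(w^-)+\frac12\,\frac1{|I^+|}\sum_{J\in\mathcal{D}(I^+)}c_J^\Phi(w^+),
\end{equation*}
and a direct computation using $\Delta_I w=x_1^--x_1^+,$ $\Delta_I w^{-1}=x_2^--x_2^+$ (here I must take care with the sign convention $\Delta_J=\av{\cdot}{J^-}-\av{\cdot}{J^+}$) gives $c_I^\Phi(w)/|I|=\Phi(x_1x_2)\bigl[(x_1^--x_1^+)^2/x_1^2+(x_2^--x_2^+)^2/x_2^2\bigr].$ Since $w\in E_{Q,x,I},$ the left-hand side is bounded by $\bel{B}_{Q,\Phi}(x);$ letting $\varepsilon\to0$ yields~\eqref{a1}. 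The proof of~\eqref{a2} is identical, with suprema replaced by infima and inequalities reversed.

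The main obstacle is the bookkeeping in step (1): verifying that the glued weight $w$ stays in $E_{Q,x,I}$ — in particular, that the $A_2^d$-characteristic condition at the parent interval $I$ is exactly captured by the hypothesis $(x^-+x^+)/2\in\Omega_Q$ — and matching the $\Delta_J$-based expression for $c_I^\Phi(w)$ to the quadratic form in the statement. Everything else is straightforward bookkeeping once the scaling invariance in (3) and the Cauchy--Schwarz remark in (2) are in place.
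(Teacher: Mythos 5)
Your proposal is correct and follows essentially the same route as the paper: the boundary condition via the equality case of Cauchy--Schwarz, homogeneity via the rescaling $w\mapsto\tau w$ with $\tau=\sqrt{x_2/x_1}$, and the main inequality via gluing near-optimal weights on $I^\pm$ and splitting off the $J=I$ term. The only thing you gloss over that the paper makes explicit is the nonemptiness of $E_{Q,x,I}$ for an arbitrary interior point $x\in\Omega_Q$ (the paper exhibits the two-step weight~\eqref{w*} for this); your step (1) tacitly assumes $E_{Q,x^\pm,I^\pm}\ne\emptyset$ when it picks near-optimizers $w^\pm$, so you would want to include such a construction or at least remark that it is needed.
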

\begin{proof}
It suffices to prove these statements for $\bel{B}_{Q,\Phi}$ as the proof for $\bel{b}_{Q,\Phi}$ is almost identical.  

To prove~\eqref{a1}, first observe that for any point $x=(x_1,x_2)\in\Omega_Q,$ there exists a weight $w\in \AQ(I)$ such that $\av{w}I=x_1$ and $\av{w^{-1}}I=x_2.$ For instance, one can take
\eq[w*]{
w(t)=\begin{cases}
x_1\Big(1-\sqrt{1-\frac1{x_1x_2}}\Big),&t\in I^-,
\medskip

\\
x_1\Big(1+\sqrt{1-\frac1{x_1x_2}}\Big),&t\in I^+.
\end{cases}
}
Indeed, being constant on $I^\pm,$ this weight is in $A^d_2(I^\pm)$ with $[w]_{A^d_2(I^\pm)}=1.$ Since $x\in\Omega_Q$ and 
$$
\AQ(I)=\{w: w|_{I^-}\in \AQ(I^-), w|_{I^+}\in\AQ(I^+), \av{w}I\av{w^{-1}}I\leqslant Q\},
$$ 
the conclusion follows. This means that the supremum and infimum in the definitions~\eqref{6.6n} and~\eqref{6.7n} are taken over a non-empty set.

Now, for any $w\in\AQ(I),$ 
\begin{align*}
\frac1{|I|}\sum_{J\in\mathcal{D}(I)}c_J^\Phi(w)&=\frac12\,\frac1{|I^-|}\sum_{J\in\mathcal{D}(I^-)}c_J^\Phi(w)+
\frac12\,\frac1{|I^+|}\sum_{J\in\mathcal{D}(I^+)}c_J^\Phi(w)+f(s_I(w))R_I(w),
\end{align*}
where we have used notation~\eqref{sr}. Inequality~\eqref{a1} now follows from considering any weight $w^*\in\AQ(I)$ that realizes (or almost realizes) the supremum in~\eqref{6.6n} on both $I^-$ and $I^+$ (note that $w^*|_{I^-}$ and $w^*|_{I^+}$ can be chosen independently of each other).

The boundary condition~\eqref{bcond} holds since the only functions $w$ in $A_2^d(I)$ satisfying $\av{w}I=1/\av{w^{-1}}I$ are constants.

Lastly, to show~\eqref{hom}, take any $\tau>0$ and replace $w$ with $\tau w$ in the definitions \eqref{6.6n} and~\eqref{6.7n}. The class $\AQ$ is invariant under this transformation, as is the sum $\frac1{|I|}\sum_{J\in\mathcal{D}(I)}c_J^\Phi(w).$ Therefore,
$$
\bel{B}_{Q,\Phi}(\tau x_1,\tau^{-1} x_2)=\bel{B}_{Q,\Phi}(x_1,x_2),
$$
and setting $\tau=\sqrt{x_2/x_1}$ finishes the proof.
\end{proof}
The central point in Bellman analysis on martingales is that if one has {\it any} function $B$ on $\Omega_Q$ that satisfies~\eqref{a1} and~\eqref{bcond}, that function is automatically a majorant of $\bel{B}_{Q,\Phi};$ a similar conclusion applies to $\bel{b}_{Q,\Phi}$. 
Specifically, we have the following lemma, which implements in our setting the main part of any Bellman-function argument, sometimes referred to as ``Bellman induction'' or ``induction on scales.''
\begin{lemma}
\label{le1}
Let $B$ and $b$ be functions on $\Omega_Q$ satisfying  
\eq[a1.1]{
B\Big(\frac{x^-+x^+}2\Big)\geqslant\frac12B(x^-)+\frac12B(x^+)+
\Phi(x_1x_2)\left[\frac{(x_1^--x_1^+)^2}{x_1^2}+\frac{(x_2^--x_2^+)^2}{x_2^2}\right]
}
and
\eq[a2.1]{
b\Big(\frac{x^-+x^+}2\Big)\leqslant\frac12b(x^-)+\frac12b(x^+)+
\Phi(x_1x_2)\left[\frac{(x_1^--x_1^+)^2}{x_1^2}+\frac{(x_2^--x_2^+)^2}{x_2^2}\right],
}
respectively, for all points $x^-,x^+\in\Omega_Q$ such that $(x^-+x^+)/2\in\Omega_Q,$
as well as
\eq[bcond1]{
B(x_1,1/x_1)=0,\quad b(x_1,1/x_1)=0,~\forall x_1\in(0,\infty).
}
Assume that $\Phi$ is a non-negative, bounded function on $[1,Q].$ Then
$$
B(x)\geqslant\bel{B}_{Q,\Phi}(x),\qquad b(x)\leqslant\bel{b}_{Q,\Phi}(x),\qquad\forall x\in\Omega_Q.
$$
\end{lemma}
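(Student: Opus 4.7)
The plan is to carry out the standard Bellman induction on dyadic scales. Fix $w\in\AQ(I)$ and set $x_J=(\av{w}J,\av{w^{-1}}J)\in\Omega_Q$ for each $J\in\mathcal{D}(I)$. The averaging identity $\av{\cdot}{J}=\tfrac12\bigl(\av{\cdot}{J^-}+\av{\cdot}{J^+}\bigr)$, applied to $w$ and $w^{-1}$, gives $x_J=(x_{J^-}+x_{J^+})/2$, so the triple $(x_{J^-},x_J,x_{J^+})$ meets the hypotheses of \eqref{a1.1}--\eqref{a2.1}. Multiplying by $|J|$ and using $|J^\pm|=|J|/2$ converts those main inequalities into the one-step estimates
$$|J|B(x_J)\geq |J^-|B(x_{J^-})+|J^+|B(x_{J^+})+c_J^\Phi(w),$$
and the reverse inequality for $b$; the correction term matches $c_J^\Phi(w)$ exactly, by the definitions in \eqref{3g} and \eqref{sr}. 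Summing over $J\in\mathcal{D}_k(I)$ and writing $E_k=\sum_{J\in\mathcal{D}_k(I)}|J|B(x_J)$ and $F_k=\sum_{J\in\mathcal{D}_k(I)}|J|b(x_J)$, these collapse to the telescoping relations
$$E_k-E_{k+1}\geq\sum_{J\in\mathcal{D}_k(I)}c_J^\Phi(w)\geq F_k-F_{k+1}.$$

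For the upper bound I would first show that $B\geq 0$ on $\Omega_Q$. Given $x\in\Omega_Q$, the quadratic $t^2-2x_1t+x_1/x_2=0$ has nonnegative discriminant $4x_1(x_1x_2-1)/x_2$ and two positive roots $a,b$; the points $(a,1/a)$ and $(b,1/b)$ then lie on the boundary curve $x_1x_2=1$, and a direct computation places their midpoint exactly at $x$. Feeding these into \eqref{a1.1} and using \eqref{bcond1} gives $B(x)\geq\Phi(x_1x_2)[\,\cdot\,]\geq 0$. Hence each $E_k\geq 0$, and summing $E_0-E_n=\sum_{k<n}(E_k-E_{k+1})$ while discarding the non-negative $E_n$ yields
$$|I|B(x_I)\geq\sum_{k=0}^{n-1}\sum_{J\in\mathcal{D}_k(I)}c_J^\Phi(w).$$
Letting $n\to\infty$ by monotone convergence and dividing by $|I|$ gives $B(x_I)\geq\frac{1}{|I|}\sum_{J\in\mathcal{D}(I)}c_J^\Phi(w)$; taking the supremum over $w\in E_{Q,x,I}$ yields $B\geq\bel{B}_{Q,\Phi}$.

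For the lower estimate, the analogous telescoping reads
$$|I|b(x_I)\leq F_n+\sum_{k=0}^{n-1}\sum_{J\in\mathcal{D}_k(I)}c_J^\Phi(w),$$
and the main obstacle is controlling the tail $F_n$ as $n\to\infty$, since no a priori sign information on $b$ is available. My plan is to first establish the inequality on the dense subclass of \emph{step} weights---those constant on every $J\in\mathcal{D}_N(I)$ for some finite $N$. For such a $w$, any $J$ with $|J|\leq 2^{-N}|I|$ satisfies $\av{w}J\av{w^{-1}}J=1$, so $x_J$ lies on the boundary curve and \eqref{bcond1} forces $b(x_J)=0$; hence $F_n=0$ for $n\geq N$. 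The increments $c_J^\Phi(w)$ also vanish beyond generation $N$, so the telescoping collapses to $|I|b(x_I)\leq\sum_{J\in\mathcal{D}(I)}c_J^\Phi(w)$. To extend this to a general $w\in E_{Q,x,I}$, I would approximate by the step truncations $w_N=\sum_{J\in\mathcal{D}_N(I)}\av{w}J\chi^{}_J$ (members of $\AQ(I)$ by Jensen) and invoke dominated convergence, using the boundedness of $\Phi$ together with the pointwise convergence $x_J(w_N)\to x_J(w)$ on every fixed generation, to pass to the limit in the corresponding Carleson sums. Taking the infimum over $w$ then delivers $b\leq\bel{b}_{Q,\Phi}$.
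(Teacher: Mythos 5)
Your argument for the upper estimate $B\geqslant\bel{B}_{Q,\Phi}$ is correct and essentially the same as the paper's: the boundary decomposition to show $B\geqslant0$, then telescoping and discarding the nonnegative tail.

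Your argument for the lower estimate $b\leqslant\bel{b}_{Q,\Phi}$ has a genuine gap. The step-weight/approximation route you propose requires $b$ to be continuous, which is not among the hypotheses of the lemma. Concretely: for the truncation $w_N=\sum_{J\in\mathcal{D}_N(I)}\av{w}J\chi^{}_J$ you have $\av{w_N}I=\av{w}I=x_1$, but by Jensen $\av{w_N^{-1}}I<\av{w^{-1}}I=x_2$ (strictly, unless $w$ is already a step weight at scale $N$). So the inequality you obtain for step weights is $b\bigl(x_1,\av{w_N^{-1}}I\bigr)\leqslant\frac1{|I|}\sum_J c_J^\Phi(w_N)$, and to convert its left side into $b(x_1,x_2)$ you must pass to the limit $\av{w_N^{-1}}I\to x_2$ inside $b$, which is exactly continuity (or lower semicontinuity) of $b$. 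Nothing in the hypotheses gives you that. The passage to the limit in the Carleson sums on the right is also not a straightforward application of dominated convergence, since the naive bound $c_J^\Phi(w_N)\leqslant 8M|J|$ is not summable, and for a general (non-monotone) $\Phi$ you do not even have $c_J^\Phi(w_N)\leqslant c_J^\Phi(w)$; but the continuity of $b$ is the decisive obstruction.

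The paper avoids this by never leaving the class of general $w$: it first derives, in the same way you derived $B\geqslant0$, the a priori two-sided bound $0\leqslant b(x)\leqslant8\Phi(x_1x_2)\bigl(1-\frac1{x_1x_2}\bigr)\leqslant8M\bigl(1-\frac1{x_1x_2}\bigr)$, with $M=\|\Phi\|_{L^\infty([1,Q])}$. This bounds the tail $F_n=\sum_{J\in\mathcal{D}_n(I)}|J|\,b(x_J)$ by $8M\sum_{J\in\mathcal{D}_n(I)}|J|\bigl(1-\frac1{s_J^2}\bigr)$, which tends to $0$ as $n\to\infty$ by the Lebesgue differentiation theorem (so $s_J^2=\av{w}J\av{w^{-1}}J\to1$ pointwise a.e.) and dominated convergence on the finite-measure set $I$. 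This is where the boundedness of $\Phi$ is used, and it is the key idea missing from your proposal. In fact the paper's remark immediately after the lemma explicitly notes that your alternative (step weights, then approximate) works only under the extra hypothesis that $b$ is continuous.
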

\begin{proof}
First, we obtain elementary estimates on $B$ and $b.$ Take any $x\in\Omega_Q$ and write $x=(x^-+x^+)/2,$ where, as in~\eqref{w*},
$$
x_1^\pm=x_1\Big(1\pm\sqrt{1-\frac1{x_1x_2}}\Big),\quad x_2^\pm=x_2\Big(1\mp\sqrt{1-\frac1{x_1x_2}}\Big).
$$
We have $x_1^-x_2^-=x_1^+x_2^+=1$ and $(x_1^--x_1^+)^2/x_1^2+(x_2^--x_2^+)^2/x_2^2=8\big(1-1/(x_1x_2)\big).$ Let $M=\|\Phi\|_{L^\infty([1,Q])}.$ Using~\eqref{a1.1} and noting that $B(x^-)=B(x^+)=0$ by~\eqref{bcond1}, we obtain
$$
B(x)\geqslant 8\Phi(x_1x_2)\Big(1-\frac1{x_1x_2}\Big)\geqslant0.
$$
Similarly,
$$
b(x)\leqslant 8\Phi(x_1x_2)\Big(1-\frac1{x_1x_2}\Big)\leqslant 8M\Big(1-\frac1{x_1x_2}\Big).
$$
Now, again take any $x\in\Omega_Q$ and any interval $I.$ Let $w\in\AQ(I)$ be any weight such that $\av{w}I=x_1$ and $\av{w^{-1}}I=x_2.$ For any $J\in\mathcal{D}(I),$ let $x_J=\big(\av{w}J,\av{w^{-1}}J\big)$; since $w\in\AQ(I),$ $x_J\in\Omega_Q.$ 
Applying~\eqref{a1.1} repeatedly, we have 
\begin{align*}
B(x)&\geqslant \frac12 B(x_{I^-})+\frac12 B(x_{I^+})+\Phi(x_1x_2)\,R_I(w)\\
&\geqslant\frac1{|I|}\sum_{J\in\mathcal{D}_n(I)}|J| B(x_J)+\sum_{k=0}^{n-1}\sum_{J\in\mathcal{D}_n(I)}f(s_J)R_J(w).
\end{align*}
Since $B\geqslant0,$ we can drop the first sum and then take the limit as $n\to\infty.$ Taking the supremum over all $w\in\AQ(I)$ with $\av{w}I=x_1$ and $\av{w^{-1}}I=x_2$ proves the lemma for $B.$

The argument for $b$ is similar. Using~\eqref{a2.1} repeatedly, we get
\eq[lb]{
b(x)\leqslant\frac1{|I|}\sum_{J\in\mathcal{D}_n(I)}|J| b(x_J)+\sum_{k=0}^{n-1}\sum_{J\in\mathcal{D}_n(I)}f(s_J)R_J(w).
}
The first sum is bounded by $\frac{8M}{|I|}\sum_{J\in\mathcal{D}_n(I)}|J| \Big(1-\frac1{s_J^2}\Big),$ which converges to $0$ as $n\to\infty$ by the Lebesgue differentiation theorem and the Lebesgue dominated convergence theorem. Taking the limit as $n\to\infty$ and then the infimum over all appropriate $w$ completes the proof.
\end{proof}
\begin{remark}
The reader may wonder if the condition that $\Phi$ be bounded is necessary in this lemma. It is clearly not needed for the conclusion about $B.$ For the lower candidate $b,$ however, some technical assumption is needed in order to disregard the first sum in the right-hand side of~\eqref{lb}. An alternative would be to assume that $b$ is continuous on $\Omega_Q,$ as one can then run the induction only for dyadically-simple weights (for which the induction is finite) and then approximate an arbitrary weight by dyadically-simple ones. In any case, this distinction is inconsequential: first, in all cases of Theorem~\ref{mbl} $\Phi$ is automatically bounded on any finite interval; and second, our only lower candidate, $b(x)=a_f(\sqrt{x_1x_2}),$ is continuous on $\Omega_Q.$
\end{remark}

We now turn to finding  candidates $B$ and $b$ satisfying the hypotheses of Lemma~\ref{le1}. Let us focus on the upper candidate $B;$ for the lower candidate, the steps below are exactly the same, except all inequality signs are reversed.

If $B$ is sufficiently differentiable, condition~\eqref{a1.1} yields the following differential inequality:
\begin{align}
\label{b1}
\frac18\,[dx_1~dx_2]\,{\rm d}^2 B(x)\,[dx_1~dx_2]^T&+\Phi(x_1x_2)\left[\Big(\frac{dx_1}{x_1}\Big)^2+\Big(\frac{dx_2}{x_2}\Big)^2\right]\leqslant0,
\end{align}
where ${\rm d}^2B$ is the Hessian of $B.$

In matrix form, we get
\eq[c1]{
\left[
\begin{array}{ll}
B_{x_1x_1}+8\frac{\Phi(x_1x_2)}{x_1^2}& B_{x_1x_2}\\
\\
B_{x_1x_2}& B_{x_2x_2}+8\frac{\Phi(x_1x_2)}{x_2^2}
\end{array}
\right]\leqslant0.
}
The best (true) candidate $B$ must accommodate the existence of an optimizing weight $w$ (or a sequence thereof), which would produce an equality (or approximate equality) on every step of the inductive process of Lemma~\ref{le1}. In problems that admit infinitesimal forms such as~\eqref{c1} this typically means that the kernel of the corresponding differential matrix is non-trivial. Imposing this condition on $B,$ we obtain the differential equation
\eq[c2]{
\Big(B_{x_1x_1}+8\frac{\Phi(x_1x_2)}{x_1^2}\Big)\Big(B_{x_2x_2}+8\frac{\Phi(x_1x_2)}{x_2^2}\Big)=B^2_{x_1x_2},\quad x\in\Omega_Q.
}
We couple this equation with the boundary condition
\eq[c2.1]{
B(x_1,1/x_1)=0
}
and, to ensure that ${\rm d}^2B\leqslant0,$ with the inequalities
\eq[c3]{
B_{x_1x_1}+8\frac{\Phi(x_1x_2)}{x_1^2}\leqslant0;\quad  B_{x_2x_2}+8\frac{\Phi(x_1x_2)}{x_2^2}\leqslant0.
}

We now solve the system~\eqref{c2},~\eqref{c2.1} using the homogeneity statement~(\ref{hom}) from Lemma~\ref{main_le}. Recall our notation:
$$
s=\sqrt{x_1x_2},\quad f(s)=\Phi(s^2),\quad L=\sqrt Q.
$$ 
Let ${\A}(s)$ stand for either $B(s,s)$ or $b(s,s),$ as needed. With this notation, and upon differentiation,~\eqref{c2} and~\eqref{c2.1} become
\eq[c4]{
\Big(s^2\A''(s)-s\A'(s)+32f(s)\Big)^2=\Big(s^2\A''(s)+s\A'(s)\Big)^2,\quad 1\leqslant s\leqslant L, \quad \A(1)=0.
}
The conditions~\eqref{c3} and their equivalent for $b$ become 
\eq[c5]{
s^2\A''(s)-s\A'(s)+32f(s)\leqslant0\quad\text{and}\quad s^2\A''(s)-s\A'(s)+32f(s)\geqslant0,
}
respectively. Taking square roots in~\eqref{c4}, we obtain two elementary linear equations:
$$
\A'(s)=16\,\frac{f(s)}s;\qquad\qquad \A''(s)=-16\,\frac{f(s)}{s^2}.
$$
Accordingly, we have two solutions, one of which is completely determined by the initial condition $\A(1)=0$ and the other contains a parameter:
\eq[g]{
a_f(s)=16\int_1^s\frac{f(r)}r\,dr
}
and
\eq[G1]{
A_f(s)=16\left[\int_1^s\frac{f(r)}r\,dr-s\int_1^s\frac{f(r)}{r^2}\,dr\right]+C(s-1).
}
How do we determine the constant $C?$ In general, this would depend on whether we intend for $A_f$ to be an upper or lower Bellman candidate, and on what assumptions are made on $f.$ In this paper, we only use $A_f$ as an upper candidate and so we want $C$ to be the smallest number such that the first inequality in~\eqref{c5} is satisfied on 
$[1,L].$ For $\mathcal{A}=A_f$ that inequality is equivalent to
$$
C\geqslant 16\left[\frac{f(s)}s+\int_1^s\frac{f(z)}{z^2}\,dz\right],\qquad 1\leqslant s\leqslant L.
$$
If $f$ is increasing, as it is assumed to be in Part~(1) of Theorem~\ref{mbu}, the maximum of the right-hand side is attained when $s=L,$ thus we set
$$
C= 16\left[\frac{f(L)}L+\int_1^L\frac{f(z)}{z^2}\,dz\right],
$$
which is equivalent to setting $L^2A_f''(L)-LA_f'(L)+32f(L)=0.$ This yields a complete Bellman candidate
\eq[G]{
A_{L,f}(s)=16\left[\frac{f(L)}L+\int_1^L\frac{f(z)}{z^2}\,dz\right](s-1)-16\int_1^s\frac{f(z)}{z^2}\,(s-z)\,dz.
}
In the the next section, we verify that the functions $a_f(\sqrt{x_1x_2})$ and $A_{L,f}(\sqrt{x_1x_2})$ satisfy the appropriate ``main inequalities'' -- either~\eqref{a1.1} or~\eqref{a2.1}, depending on the assumptions on~$f.$

\section{Verification of the main inequality}
\label{main_ineq}
Lemma~\ref{le1} says that if a Bellman candidate satisfies the main inequality,~\eqref{a1.1} or~\eqref{a2.1}, and the boundary condition~\eqref{bcond1}, then it automatically gives an upper or lower estimate on the Bellman function itself. This section is devoted to verifying the main inequality(ies) for the candidates $a_f$ and $A_{L,f}$ (which satisfy the boundary condition by construction) and thus proving the following lemma, which establishes the upper estimates in Parts~(1) and~(3) of Theorem~\ref{mbu} and the lower estimate in Part~(1) of~Theorem~\ref{mbl}. 
\begin{lemma}
\label{lmi}
Let $L=\sqrt{Q},$ $f(z)=\Phi(z^2),$ and $h(z)=f(z)/z^2.$ 
\ben
\item
If $\Phi$ is increasing and $h$ is convex, then
$$
\bel{B}_{Q,\Phi}(x_1,x_2)\leqslant A_{L,f}\big(\sqrt{x_1x_2}\big).
$$

\item
If $\Phi$ is decreasing, then
$$
\bel{B}_{Q,\Phi}(x_1,x_2)\leqslant a_f\big(\sqrt{x_1x_2}\big).
$$
\item
If $h$ is increasing, then
$$
\bel{b}_{Q,\Phi}(x_1,x_2)\geqslant a_f\big(\sqrt{x_1x_2}\big).
$$
\een
\end{lemma}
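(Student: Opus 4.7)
The plan is to apply Lemma~\ref{le1} to each candidate $B(x_1,x_2) = G(\sqrt{x_1x_2})$, with $G = A_{L,f}$ for Part~(1) and $G = a_f$ for Parts~(2) and~(3). The boundary condition of Lemma~\ref{le1} holds automatically since $G(1) = 0$ by construction, so only the main inequality --- \eqref{a1.1} for Parts~(1) and~(2), \eqref{a2.1} for Part~(3) --- needs verification. To parametrize, given $x^\pm \in \Omega_Q$ with midpoint $x \in \Omega_Q$, write $s = \sqrt{x_1x_2}$, $s^\pm = \sqrt{x_1^\pm x_2^\pm}$, $\tau^\pm = \sqrt{x_2^\pm/x_1^\pm}$, and $\eta = \tau^+/\tau^-$. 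A direct computation gives
$$
4s^2 = (s^-)^2 + (s^+)^2 + s^-s^+\bigl(\eta + \eta^{-1}\bigr),
$$
so $2s \geq s^- + s^+$ with equality iff $\eta = 1$, and the quadratic $\rho := (x_1^- - x_1^+)^2/x_1^2 + (x_2^- - x_2^+)^2/x_2^2$ appearing in \eqref{a1.1}--\eqref{a2.1} has a similar explicit rational expression in $(s^-, s^+, \eta)$. Because our candidates depend only on $s$, the values of $B$ at $x$ and $x^\pm$ reduce to $G(s)$ and $G(s^\pm)$.

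The second step would apply the ``stronger, simpler'' device announced in the introduction: replace $\Phi(s^2)$ by a monotone surrogate $\widetilde\Phi(s^-, s^+)$ that coincides with $\Phi(s^2)$ at $\eta = 1$ and produces a slacker but still correct inequality otherwise. In Parts~(1) and~(3), $\Phi$ is increasing (directly in~(1); in~(3) this follows from $h$ increasing, since then $\Phi'(s^2) \geq h(s) \geq 0$), while in Part~(2) $\Phi$ is decreasing. Combined with $s \geq \min(s^\pm)$, this allows replacing $\Phi(s^2)$ by $\Phi(\min(s^\pm)^2)$ (or the analogous $\max$-surrogate), the choice dictated by whichever direction makes the main inequality harder --- i.e., the correct direction for a proof. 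A matching bound on $\rho$ at $\eta = 1$ preserves the extremal configuration.

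The resulting one-variable inequality, of the form $G(s) - \tfrac12[G(s^-)+G(s^+)] \gtrless \widetilde\Phi(s^\pm)\cdot\widetilde\rho(s^-, s^+)$ subject to $2s \geq s^- + s^+$, can then be verified from the explicit formulas. From \eqref{G}, $A_{L,f}''(s) = -16 f(s)/s^2 \leq 0$, so $A_{L,f}$ is concave; its concavity defect, expressed as a double integral of $-A_{L,f}'' = 16h$, is compared to the surrogate quadratic using the convexity of $h$ assumed in Part~(1). For Parts~(2) and~(3), $a_f'(s) = 16 f(s)/s$ and $a_f''(s) = 16(sh(s))'$; the latter is nonnegative when $h$ is increasing, giving convexity of $a_f$ in Part~(3), and the convexity defect plus the first-order excess $a_f(s) - a_f((s^-+s^+)/2) \geq 0$ together supply the required bound. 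In Part~(2), the integral representation of $a_f$ together with the decreasingness of $\Phi$ closes the case without needing sign information on $a_f''$.

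The main obstacle I anticipate is the second step: selecting the surrogate so that the one-variable inequality is provable by elementary means while equality at $\eta = 1$ is preserved, so no sharpness is sacrificed relative to the infinitesimal equation \eqref{c2} used to derive the candidate. Part~(2) looks the most delicate, since neither convexity nor monotonicity of $h$ is available; the decreasingness of $\Phi$ must do all the work, likely forcing a case split on the ordering of $s, s^-, s^+$ and a careful treatment of $\int_{s^-}^{s^+} f(z)/z\,dz$ in comparison with $\widetilde\rho$.
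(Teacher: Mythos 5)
Your setup is correct: you reduce via Lemma~\ref{le1} to verifying the main inequality \eqref{a1.1} or \eqref{a2.1} for $B(x_1,x_2)=G(\sqrt{x_1x_2})$ with $G\in\{A_{L,f},a_f\}$, the boundary condition holds since $G(1)=0$, and your parametrization by $(s^-,s^+,\eta)$ with $4s^2=(s^-)^2+(s^+)^2+s^-s^+(\eta+\eta^{-1})$ is a valid alternative bookkeeping for the paper's reduction to the domain $\omega_L$ of \eqref{d3}.

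The gap is in your second step, and it is fatal as stated. You propose to ``strengthen'' by replacing the coefficient $\Phi(s^2)=f(s)$ with a surrogate depending only on $s^\pm$. In Part~(1), proving the upper main inequality requires replacing $f(s)$ by something $\geqslant f(s)$; but $f$ is increasing and $s$ ranges freely over $[(s^-+s^+)/2,\,L]$, so neither $f(\min s^\pm)$ nor $f(\max s^\pm)$ dominates $f(s)$ --- the only admissible majorant is $f(L)$, which destroys the sharpness you need. In Part~(3) the $\min$-surrogate over-reaches: take $s^-=s^+=s_0<L$ and $s=L$; then the quadratic factor equals $8(1-s_0^2/L^2)$ and, using $h$ increasing to bound $\int_{s_0}^L f(r)/r\,dr=\int_{s_0}^L r\,h(r)\,dr\geqslant \tfrac12 h(s_0)(L^2-s_0^2),$ one gets
$$
a_f(L)-a_f(s_0)-8\,f(s_0)\Big(1-\frac{s_0^2}{L^2}\Big)\;\geqslant\;8\,h(s_0)\,\frac{(L^2-s_0^2)^2}{L^2}\;>\;0,
$$
whereas your surrogate inequality would require this quantity to be $\leqslant0$ (the true $P$, which keeps $f(s)=f(L)$, is indeed $\leqslant0$ there). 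The simplification the paper actually uses is of a different kind: it keeps the factor $f(s)$ untouched and instead replaces the quadratic form $\rho=8-4\tfrac{(s^-)^2+(s^+)^2}{s^2}+\tfrac{((s^-)^2-(s^+)^2)^2}{s^4}$ by a simpler bound of the correct sign --- $\rho\leqslant8(1-s^-s^+/s^2)$ for the upper cases, giving the functions $U$ and $V$ of \eqref{r1}, and $\rho\geqslant4\big(2-((s^-)^2+(s^+)^2)/s^2\big)$ for the lower case, giving $W$ --- with equality precisely at the relevant extremal configurations ($s=(s^-+s^+)/2$ and $s^-=s^+$, respectively). The elementary monotonicity/convexity arguments you gesture at then close the one-variable estimates, but the object whose inequality is being slackened must be $\rho$, not $\Phi$.
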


The proofs of the three separate parts of this lemma are contained in Lemmas~\ref{le0},~\ref{lemka2}, and~\ref{le0.2}. First, let us rephrase~\eqref{a1.1} and~\eqref{a2.1} using the homogeneity of the problem. Suppose $B$ is an upper candidate. Let $\A(s)=B(s,s).$ Alongside $s=\sqrt{x_1x_2},$ we will use $s^\pm=\sqrt{x_1^\pm x_2^\pm}.$ A simple calculation shows that
$$
\frac{(x_1^--x_1^+)^2}{x_1^2}+\frac{(x_2^--x_2^+)^2}{x_2^2}=8-4\,\frac{(s^-)^2+(s^+)^2}{s^2}+\frac{((s^-)^2-(s^+)^2)^2}{s^4},
$$
and so~\eqref{a1.1} is equivalent to
\eq[d1]{
\A(s)-\frac12 \A(s^-)-\frac12 \A(s^+)-f(s)\left[8-4\,\frac{(s^-)^2+(s^+)^2}{s^2}+\frac{((s^-)^2-(s^+)^2)^2}{s^4}\right]\geqslant0.
} 
If $b$ is a lower candidate, and $\A(s)=b(s,s),$ then the inequality to be verified,~\eqref{a2.1}, becomes
\eq[d2]{
\A(s)-\frac12 \A(s^-)-\frac12 \A(s^+)-f(s)\left[8-4\,\frac{(s^-)^2+(s^+)^2}{s^2}+\frac{((s^-)^2-(s^+)^2)^2}{s^4}\right]\leqslant0.
}
It is important to determine the domain of these variables. Clearly $(s^-,s^+,s)\in[1,L]^3,$ but this is too crude. If $s^-$ and $s^+$ are fixed, $s$ cannot be too small; this is reasonably clear from the geometry of the problem. Formally, to find the smallest $s$ we solve the following problem 
$$
\min\left\{\Big(\frac{x_1^-+x_1^+}2\Big)\,\Big(\frac{x_2^-+x_2^+}2\Big):x_1^-x_2^-=(s^-)^2, x_1^+x_2^+=(s^+)^2\right\}.
$$
It is easy to check that this minimum is attained when all three points $x^-, x^+,$ and  $(x^-+x^+)/2$ lie on the same line through the origin. In this case, $s=\sqrt{x_1x_2}=(s^-+s^+)/2.$ Thus the domain over which~\eqref{d1} or~\eqref{d2} needs to be verified for each particular choice of $\A$ is 
\eq[d3]{
\omega_L\df\Big\{(s^-,s^+,s):\quad 1\leqslant s^-\leqslant L;\quad 1\leqslant s^+\leqslant L;\quad \frac{s^-+s^+}2\leqslant s\leqslant L\Big\}.
}

Consider a new function on $\omega_L:$
\eq[p1]{
P(s^-,s^+,s)=\A(s)-\frac12 \A(s^-)-\frac12 \A(s^+)-f(s)\left[8-4\,\frac{(s^-)^2+(s^+)^2}{s^2}+\frac{((s^-)^2-(s^+)^2)^2}{s^4}\right].
}

In the lemmas below, we will set either $\A=A_{L,f}$ or $a_f,$ depending on the conditions assumed on $f$ and on whether we are proving $\A$ to be an upper Bellman candidate, in which case we need to show that $P\geqslant0$ on $\omega_L,$ or a lower one, for which we need to show $P\leqslant0.$ We split further presentation in this section in three parts.

\subsection{$A_{L,f}$ as an upper candidate}
According to the discussion above, we have to show that if $f$ is increasing and $h$ is convex, then $P$ given by~\eqref{p1} with $\A=A_{L,f}$ is non-negative on the domain $\omega_L.$ We will prove a somewhat stronger statement, and one that is much easier to handle computationally. Let 
\eq[r1]{
U(s^-,s^+,s)=A_{L,f}(s)-\frac12 A_{L,f}(s^-)-\frac12 A_{L,f}(s^+)-8f(s)\left[1-\frac{s^-s^+}{s^2}\right].
}
Observe that $P\geqslant U$ on $\omega_L$ and $P\big(s^-,s^+,\frac{s^-+s^+}2\big)=U\big(s^-,s^+,\frac{s^-+s^+}2\big).$
\begin{lemma}
\label{le0}
If $f$ is a non-negative, increasing function on $[1,\infty)$ and $h(z)= f(z^2)/z^2$ is convex, then $U\geqslant0$ on 
$\omega_L.$
\end{lemma}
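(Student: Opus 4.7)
The plan is to show $U \geq 0$ by rewriting $U$ via Taylor's theorem applied to $A_{L,f}$ at the midpoint $m = (s^- + s^+)/2$, and then bounding the resulting pieces using convexity of $h$ and monotonicity of $f$ respectively. Throughout, set $d = (s^+ - s^-)/2$ and $\delta = s - m$, so that $\omega_L$ corresponds to $d \leq \min(m-1, L-m)$ and $0 \leq \delta \leq L-m$, with $s^2 - s^- s^+ = (s^2 - m^2) + d^2$.

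Since $A_{L,f}''(z) = -16h(z)$, Taylor's theorem with integral remainder gives the identity
$$A_{L,f}(m) - \tfrac12 \bigl[A_{L,f}(s^-) + A_{L,f}(s^+)\bigr] = 8\int_0^d \bigl[h(m-u) + h(m+u)\bigr](d-u)\,du.$$
Substituting this and using $8 h(s) d^2 = 16 h(s) \int_0^d (d-u)\,du$ allows one to decompose
$$U = \Delta(s,m) + 8\int_0^d \bigl[h(m-u) + h(m+u) - 2h(s)\bigr](d-u)\,du, \quad \Delta(s,m) := A_{L,f}(s) - A_{L,f}(m) - 8 h(s)(s^2 - m^2).$$
Convexity of $h$ yields $h(m-u) + h(m+u) \geq 2h(m)$ for $u \in [0,d]$, so the integral term is bounded below by $8 d^2 [h(m) - h(s)]$. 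For $\Delta(s,m)$ I would use $A_{L,f}(s) - A_{L,f}(m) = \int_m^s A_{L,f}'(t)\,dt$, the explicit form $A_{L,f}'(t) = 16Lh(L) + 16\int_t^L h(z)\,dz$, and the identity $f'(z)/z = 2h(z) + zh'(z)$; a direct computation combined with Fubini produces the exact formula
$$\Delta(s,m) = 16\delta \int_s^L \frac{f'(z)}{z}\,dz + 8h(s)\delta^2 + 16\int_m^s h(z)(z - m)\,dz,$$
a sum of three non-negative quantities (the first because $f$ is increasing, the latter two because $h \geq 0$).

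The proof then splits into two cases. If $h(s) \leq h(m)$, both $\Delta(s,m) \geq 0$ and $8 d^2[h(m) - h(s)] \geq 0$, so $U \geq 0$ follows at once. If instead $h(s) > h(m)$, then by convexity of $h$ one has $h'_-(s) > 0$ and $h(s) - h(m) \leq h'_-(s)\delta$, so it suffices to dominate the term $8 d^2 h'_-(s)\delta$. Here is where the two hypotheses interact most tightly: using $h$ convex so that $h'(z) \geq h'_-(s)$ for $z \geq s$, one gets $16\delta \int_s^L f'(z)/z\,dz \geq 8 h'_-(s)\delta (L-s)(L+s)$, while the geometric constraint $d \leq L - m = (L-s) + \delta$ yields $L^2 - s^2 - d^2 \geq 2m(L-s) - \delta^2$. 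The residual $\delta^2$-loss is absorbed by the two $h$-height terms $8h(s)\delta^2$ and $16\int_m^s h(z)(z-m)\,dz$, the latter estimated from below by suitable convex lower bounds on $h$; after this bookkeeping the estimate closes. Non-differentiability of $f$ or $h$ is handled by routine mollification, since both sides of the inequality are continuous in $f$.
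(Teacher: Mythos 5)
Your Taylor-at-the-midpoint decomposition is correct. I verified both the identity
\[
U = \Delta(s,m) + 8\int_0^d \bigl[h(m-u)+h(m+u)-2h(s)\bigr](d-u)\,du
\]
and your explicit formula
\[
\Delta(s,m) = 16\delta\int_s^L\frac{f'(z)}{z}\,dz + 8h(s)\delta^2 + 16\int_m^s h(z)(z-m)\,dz,
\]
and the case $h(s)\leqslant h(m)$ is handled correctly. This is genuinely different from the paper's route, which shows $\partial U/\partial s^- \leqslant 0$ and thereby reduces $U\geqslant 0$ on $\omega_L$ to the two boundary cases $s=(s^-+s^+)/2$ and $s^-=s^+$, each of which is then verified directly.

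The case $h(s)>h(m)$, however, does not close with the estimates you indicate. The passage ``$h(s)-h(m)\leqslant h'_-(s)\delta$, so it suffices to dominate $8d^2 h'_-(s)\delta$'' is too lossy: $h'_-(s)$ can be much larger than the actual chord slope $(h(s)-h(m))/\delta$ when $h$ has a corner just to the left of $s$, while the compensating quantities $8h(s)\delta^2$, $16\int_m^s h(z)(z-m)\,dz$, and $16\delta\int_s^L f'(z)/z\,dz$ stay small. Concretely, take $L=10$, $(s^-,s^+,s)=(4,10,10)$ (so $m=7$, $d=\delta=3$), and $h(z)=(z-9)^+$ (hence $f(z)=z^2(z-9)^+$ is non-negative and increasing, $h$ is convex). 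Then $s=L$ kills the first term of $\Delta$, $h'_-(s)=1$, $h(s)=1$, $\int_m^s h(z)(z-m)\,dz=4/3$, so $\Delta=72+64/3=280/3\approx 93.3$; but $8d^2h'_-(s)\delta=216$, so your target inequality $\Delta\geqslant 8d^2 h'_-(s)\delta$ fails by more than a factor of two. (For the record, $U=68/3>0$ here, so the lemma itself is fine --- it is the chain of inequalities that breaks.) No convex lower bound on $h$ over $[m,s]$ can rescue this, since the tangent bound $h(z)\geqslant h(s)+h'_-(s)(z-s)$ gives $\int_m^s h(z)(z-m)\,dz\geqslant h(s)\delta^2/2 - h'_-(s)\delta^3/6$, which in this example is exactly $0$. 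The underlying problem is that replacing $h(s)-h(m)$ by $h'_-(s)\delta$ discards information that the convexity bound on the integral term had preserved; you would need to carry the true difference $h(s)-h(m)$ through to the end, or restructure the argument as the paper does.
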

\begin{proof}
Throughout the proof, we will write $A$ for $A_{L,f}.$ Let us collect a couple of useful facts about $A.$ A direct calculation gives
$$
A'(s)=16\left[\frac{f(L)}L+\int_s^L h(z)\,dz\right].
$$
Since $f$ is increasing, we have
\eq[d1.1]{
A'(s)\geqslant16\,\left[\frac{f(L)}L+f(s)\,\int_s^L\frac1{z^2}\,dz\right]=16\,\frac{f(s)}s.
}
In addition, since $h$ is convex, we have, for any $1\leqslant s_1\leqslant s_2\leqslant L,$
\eq[d1.2]{
A'(s_1)-A'(s_2)=16\int_{s_1}^{s_2} h(z)\,dz\geqslant 16\,(s_2-s_1)\,h\Big(\frac{s_1+s_2}2\Big).
}

In the next two lemmas we first reduce the inequality $U\geqslant0$ on $\omega_L$ to two of its special cases and then verify them.
\begin{lemma}
$U\geqslant0$ on $\omega_L$ if and only if 
\eq[cn1]{
U\Big(s_1,s_2,\frac{s_1+s_2}2\Big)\geqslant0\qquad\text{for all}\quad s_1, s_2\in[1,L] 
}
and
\eq[cn2]{
U(s_1,s_1,s_2)\geqslant0\qquad \text{for all}\quad1\leqslant s_1\leqslant s_2\leqslant L.
}
\end{lemma}
\begin{proof}
Both \eqref{cn1} and~\eqref{cn2} are clearly necessary. To show the sufficiency, take any point $(s^-,s^+,s)\in\omega_L.$ Assume, without loss of generality, that $s^-\leqslant s^+;$ then either $s^-\leqslant s\leqslant s^+$ or $s^-\leqslant s^+<s.$ Let us consider these cases separately.

\subsubsection*{The case $s^-\leqslant s\leqslant s^+$}

Since $s\geqslant (s^-+s^+)/2,$ we have $s^-\leqslant 2s-s^+\leqslant s\leqslant s^+.$
Using~\eqref{d1.1} and~\eqref{d1.2}, we have
$$
A'(s^-)\geqslant A'(2s-s^+)\geqslant A'(s^+)+32(s^+-s)h(s)\geqslant 16\,\frac{f(s^+)}{s^+}+32(s^+-s)h(s).
$$
Differentiating $U$ with respect to $s^-,$ and noting that $f(s^+)\geqslant f(s),$ we get
\begin{align*}
\frac{\partial U}{\partial s^-}&=-\frac12 A'(s^-)+8\,\frac{f(s)}{s^2}\,s^+\\
&\leqslant -8\,\frac{f(s^+)}{s^+}-16(s^+-s)\,\frac{f(s)}{s^2}+8\,\frac{f(s)}{s^2}\,s^+\\
&\leqslant -8\,\frac{f(s)}{s^+}-16(s^+-s)\,\frac{f(s)}{s^2}+8\,\frac{f(s)}{s^2}\,s^+\\
&=-8\,\frac{f(s)}{s^+\,s^2}\,(s^+-s)^2\leqslant0.
\end{align*}
Therefore,
$$
U(s^-,s^+,s)\geqslant U(2s-s^+,s^+,s),
$$
which is positive by~\eqref{cn1}.

\subsubsection*{The case $s^-\leqslant s^+< s$} We have
\begin{align*}
\frac{\partial U}{\partial s^-}&=-\frac12 A'(s^-)+8\,\frac{f(s)}{s^2}\,s^+\\
&\leqslant -\frac12 A'(s)+8\,\frac{f(s)}{s^2}\,s^+\\
&\leqslant -8\,\frac{f(s)}{s}+8\,\frac{f(s)}{s^2}\,s^+\\
&=-8\,\frac{f(s)}{s^2}(s-s^+)\leqslant0.
\end{align*}
Therefore,
$$
U(s^-,s^+,s)\geqslant U(s^+,s^+,s),
$$
which is positive by~\eqref{cn2}.
\end{proof} 
It remains to verify~\eqref{cn1} and~\eqref{cn2}.
\begin{lemma}
Under the assumptions of Lemma~\ref{le0}, inequalities~\eqref{cn1} and~\eqref{cn2} hold.
\end{lemma}
\begin{proof}
To prove~\eqref{cn1}, we need to show that for all $s_1,s_2\in[1,L],$
$$
A\Big(\frac{s_1+s_2}2\Big)-\frac12\,\big(A(s_1)+A(s_2)\big)-2\,h\Big(\frac{s_1+s_2}2\Big)(s_2-s_1)^2\geqslant0.
$$
It is a simple exercise to verify that if $s\in\mathbb{R},$ $\Delta\geqslant0,$  and $u$ is a twice-differentiable function on the interval $[s-\Delta,s+\Delta],$ then
\begin{align*}
u(s)-\frac12\,\big(u(s-\Delta)+u(s+\Delta)\big)&=-\frac12\,\int_{-\Delta}^{\Delta} (\Delta-|t|)u''(s+t)\,dt\\
&=-\frac12\,\int_{-\Delta}^{\Delta} (\Delta-|t|)\Big(\frac12u''(s+t)+\frac12u''(s-t)\Big)\,dt.
\end{align*} 
Using this formula with $u=A,$ $u''=-16h,$ $s=\frac{s_1+s_2}2,$ and $\Delta=|s_2-s_1|/2,$ we have
\begin{align}
\label{int}
A\Big(\frac{s_1+s_2}2\Big)&-\frac12\,\big(A(s_1)+A(s_2)\big)-2\,h\Big(\frac{s_1+s_2}2\Big)(s_2-s_1)^2\\
\notag &=8\int_{-\Delta}^{\Delta} (\Delta-|t|)\Big(\frac12h(s+t)+\frac12h(s-t)-h(s)\Big)\,dt\geqslant0,
\end{align}
where the last inequality follows because $h$ is convex.

To prove~\eqref{cn2}, we have to show that
$$
A(s_2)-A(s_1)\geqslant 8f(s_2)\left[1-\Big(\frac{s_1}{s_2}\Big)^2\right].
$$
Since $A'$ is decreasing and $A'(s)\geqslant 16 f(s)/s,$ we have
$$
A(s_2)-A(s_1)=\int_{s_1}^{s_2}A'(z)\,dz\geqslant A'(s_2)(s_2-s_1)\geqslant 16\,\frac{f(s_2)}{s_2}\,(s_2-s_1)\geqslant 
8f(s_2)\left[1-\Big(\frac{s_1}{s_2}\Big)^2\right]
$$
\end{proof}
The proof of Lemma~\ref{le0} is now complete.
\end{proof}

\subsection{$a_f$ as an upper candidate} 
Here, we have to show that if $f$ is a decreasing function, then $P$ given by~\eqref{p1} with $\A=a_f$ satisfies $P\geqslant0$ on $\omega_L.$ As in the previous case, we instead consider the function
$$
V(s^-,s^+,s)=a_f(s)-\frac12 a_f(s^-)-\frac12 a_f(s^+)-8f(s)\left[1-\frac{s^-s^+}{s^2}\right].
$$
In addition, we will slightly expand the domain. Let
\eq[dom1]{
\omega_\infty=\Big\{(s^-,s^+,s):\quad s^-\geqslant1,\quad s^+\geqslant1,\quad s\geqslant\frac{s^-+s^+}2 \Big\}.
}
Observe that $\omega_L\subset\omega_\infty$ and $P\geqslant V$ on $\omega_\infty.$ 
\begin{lemma}
\label{lemka2}
If $f$ is a non-negative, decreasing function on $[1,\infty),$ then $V\geqslant 0$ on $\omega_\infty.$ 
\end{lemma}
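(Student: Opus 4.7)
The plan is to exploit the integral representation $a_f(s) = 16\int_1^s f(r)/r\, dr$ and the monotonicity of $f$ to reduce the inequality $V \geq 0$ to the elementary bound $\log u \leq u - 1$ for $u > 0$. First, observe that on $\omega_\infty$ the AM--GM inequality applied to the defining constraint $s \geq (s^- + s^+)/2$ yields $s \geq \sqrt{s^- s^+}$, so $u := s^- s^+/s^2 \in (0, 1]$ and the bracket $[1 - s^- s^+/s^2]$ is non-negative. Assume without loss of generality that $s^- \leq s^+$; then $s \geq s^-$ automatically, but $s$ may lie on either side of $s^+$, which is the only real source of cases.

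Next, I would rewrite
$$a_f(s) - \tfrac{1}{2}a_f(s^-) - \tfrac{1}{2}a_f(s^+) = 8\int_{s^-}^s \frac{f(r)}{r}\, dr + 8\int_{s^+}^s \frac{f(r)}{r}\, dr,$$
with the usual convention for integration in the reverse direction. Since $f$ is decreasing, $f(r) \geq f(s)$ on $[s^-, s]$, so the first integral is at least $f(s)\log(s/s^-)$. For the second integral the bound $f(s)\log(s/s^+)$ works uniformly, regardless of the sign of $s - s^+$: when $s \geq s^+$ the same argument applies, and when $s < s^+$ one writes it as $-\int_s^{s^+}$, uses $f(r) \leq f(s)$ on $[s, s^+]$ to bound $\int_s^{s^+} f/r \leq f(s)\log(s^+/s)$, and then flips the sign to recover the same bound $f(s)\log(s/s^+)$ (now a negative quantity). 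Summing,
$$a_f(s) - \tfrac{1}{2}a_f(s^-) - \tfrac{1}{2}a_f(s^+) \geq 8 f(s) \log\!\bigl(s^2/(s^- s^+)\bigr) = -8 f(s)\log u.$$

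Plugging this into the definition of $V$ yields
$$V(s^-, s^+, s) \geq 8 f(s)\bigl[-\log u - (1 - u)\bigr] = 8 f(s)\bigl[u - 1 - \log u\bigr] \geq 0,$$
since $u - 1 \geq \log u$ for every $u > 0$ and $f(s) \geq 0$. The only technical wrinkle is the sign bookkeeping for the second integral when $s < s^+$; the pleasant point is that the monotonicity estimate is sign-consistent as one crosses $r = s$, so no further case split is needed. Note also that only the AM--GM consequence $s \geq \sqrt{s^- s^+}$ is actually used, which is weaker than the full domain constraint $s \geq (s^- + s^+)/2$ --- consistent with the paper's stated strategy of proving for the candidates slightly stronger inequalities that are simpler to verify.
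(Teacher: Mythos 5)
Your proof is correct and follows essentially the same route as the paper's: expand $a_f(s)-\tfrac12 a_f(s^-)-\tfrac12 a_f(s^+)$ as two integrals, use the monotonicity of $f$ to bound each below by $f(s)\log(s/s^\pm),$ and conclude via the elementary inequality $\log u\leqslant u-1$ with $u=s^-s^+/s^2.$ The paper dispenses with the sign issue for the possibly-negative integral via a one-line parenthetical remark where you spell it out explicitly, but the substance and structure are identical.
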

\begin{proof}
The definition of $a_f$ gives
$$
V(s^-,s^+,s)=8\int_{s^-}^s\frac{f(r)}r\,dr+8\int_{s^+}^s\frac{f(r)}r\,dr-8f(s)\left[1-\frac{s^-s^+}{s^2}\right].
$$
Since $f$ is decreasing, we have 
$$
\int_{s^-}^s\frac{f(r)}r\,dr\geqslant f(s)\log\Big(\frac s{s^-}\Big),\qquad \int_{s^+}^s\frac{f(r)}r\,dr\geqslant f(s)\log\Big(\frac s{s^+}\Big)
$$
(note that one of these integrals may be negative, but the inequality still holds), and so
$$
V(s^-,s^+,s)\geqslant 8f(s)\left[\log\Big(\frac{s^2}{s^-s^+}\Big)-1+\frac{s^-s^+}{s^2}\right].
$$
Since $\frac{s^-s^+}{s^2}\leqslant1,$ we conclude that $V(s^-,s^+,s)\geqslant0.$
\end{proof}
\subsection{$a_f$ as a lower candidate}
Here, we show that if $h$ is an increasing function, then the function $P$ given by~\eqref{p1} with $\A=a_f$ satisfies $P\leqslant0$ on $\omega_L.$ We will again prove a slightly stronger result. Namely, let
$$
W(s^-,s^+,s)=a_f(s)-\frac12\,a_f(s^-)-\frac12\,a_f(s^+)-4f(s)\,\left[2-\frac{(s^-)^2+(s^+)^2}{s^2}\right].
$$
Observe that $P\leqslant W$ on the domain $\omega_\infty$ given by~\eqref{dom1} and that $P(s^+,s^+,s)=W(s^+,s^+,s).$
\begin{lemma}
\label{le0.2}
If $f$ is a non-negative function on $[1,\infty)$ such that $h(s)=f(s)/s^2$ is increasing, then $W\leqslant 0$ on $\omega_\infty.$ 
\end{lemma}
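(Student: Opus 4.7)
The plan is to rewrite $W$ as a sum of two integrals each of which is manifestly non-positive under the assumption that $h$ is increasing, thereby bypassing any case split on the relative order of $s^-$, $s^+$, and $s$. Using $f(s)=s^2h(s)$, I would first rewrite the last term in the definition of $W$ as
$$
-4f(s)\Big[2-\frac{(s^-)^2+(s^+)^2}{s^2}\Big]=4h(s)\big[(s^-)^2-s^2\big]+4h(s)\big[(s^+)^2-s^2\big],
$$
splitting it symmetrically between the two endpoints. Then, using $a_f'(r)=16\,r\,h(r)$ together with the elementary identity $4h(s)[(s^\pm)^2-s^2]=-8h(s)\int_{s^\pm}^s r\,dr$, each of the two pieces $\tfrac12[a_f(s)-a_f(s^\pm)]+4h(s)[(s^\pm)^2-s^2]$ collapses into a single integral, yielding the key identity
$$
W(s^-,s^+,s)=8\int_{s^-}^{s}r\bigl[h(r)-h(s)\bigr]\,dr+8\int_{s^+}^{s}r\bigl[h(r)-h(s)\bigr]\,dr.
$$

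From this identity the conclusion is immediate. For any $a\geqslant 1$, the integral $\int_a^s r[h(r)-h(s)]\,dr$ is non-positive: if $a\leqslant s$, then $h(r)\leqslant h(s)$ on $[a,s]$, so the integrand is non-positive; if $a\geqslant s$, then $h(r)\geqslant h(s)$ on $[s,a]$, making the integrand non-negative there and hence the original reversed-orientation integral non-positive (the case $a=s$ is trivial). Applying this observation to $a=s^-$ and $a=s^+$ separately gives $W\leqslant 0$ on all of $\omega_\infty$. Notably, the argument does not use the constraint $s\geqslant(s^-+s^+)/2$ anywhere; the geometric restriction defining $\omega_\infty$ is automatic once the decomposition above is in place.

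The main obstacle, such as it is, is spotting the right symmetric decomposition of $W$. Once the term $-4f(s)[2-((s^-)^2+(s^+)^2)/s^2]$ is split into equal parts and each part is paired with the corresponding $\tfrac12[a_f(s)-a_f(s^\pm)]$, the monotonicity of $h$ does all the remaining work and no case split on the ordering of the variables is needed. This stands in pleasant contrast to Lemma~\ref{le0}, where verifying the upper candidate required first reducing to two special configurations via delicate partial-derivative estimates, and then treating each separately; here the structure of $a_f$ is simple enough that a single line of algebra reveals the inequality.
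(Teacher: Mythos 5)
Your proof is correct and is essentially the paper's argument, just packaged as an exact identity $W=8\int_{s^-}^{s}r[h(r)-h(s)]\,dr+8\int_{s^+}^{s}r[h(r)-h(s)]\,dr$ rather than as a chain of inequalities: the paper likewise writes $\int_{s^\pm}^s f(r)/r\,dr=\int_{s^\pm}^s r\,h(r)\,dr\leqslant h(s)\int_{s^\pm}^s r\,dr$ and sums the two resulting bounds to conclude $W\leqslant0$. Your observation that the constraint $s\geqslant(s^-+s^+)/2$ plays no role also matches the paper, which proves the inequality on the larger domain $\omega_\infty$.
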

\begin{proof}
The proof is similar to that of Lemma~\ref{lemka2}. We have
$$
W(s^-,s^+,s)=8\int_{s^-}^s\frac{f(r)}r\,dr+8\int_{s^+}^s\frac{f(r)}r\,dr-4f(s)\,\left[2-\frac{(s^-)^2+(s^+)^2}{s^2}\right].
$$
Since $h$ is increasing, we can write
$$
\int_{s^-}^s\frac{f(r)}r\,dr=\int_{s^-}^s\frac{f(r)}{r^2}\,r\,dr\leqslant \frac{f(s)}{s^2}\int_{s^-}^s r\,dr=\frac12\,f(s)\Big(1-\frac{(s^-)^2}{s^2}\Big),
$$
and similarly for the second integral, which immediately gives $W(s^-,s^+,s)\leqslant0.$
\end{proof}

\section{The case of concave $h:$ proof of the main estimate and discussion}
\label{concave}
In the case when the function $h(s)=\Phi(s^2)/s^2$ is concave, we do not obtain the exact Bellman functions $\bel{B}_{Q,\Phi}.$ Instead, we derive good pointwise estimates for $\bel{B}_{Q,\Phi}$ using the simple observation that the graph of $h$ lies below all of its tangents (since $h$ is concave, it does have one-sided tangents at every point). Thus, we majorate $h$ by its tangent at a point $s=s_0,$ use Part (1) of Lemma~\ref{lmi} in conjunction with this convex majorant, and then optimize over all $s_0.$

To make presentation smoother, and to illustrate in what sense the resulting estimate can be construed as optimal, the derivation below assumes that $h$ is differentiable. However, the final formula does not use derivatives, but only the fact that for every $s_0\in(1,L],$ there exists a number $m(s_0)$ such that $h(s)\leqslant m(s_0)(s-s_0)+h(s_0).$ Accordingly, we could obtain the same result by replacing all instances of $h'(s_0)$ with $m(s_0)$ and then simply taking the value of $s_0$ given by~\eqref{s_0} below. 

\begin{proof}[Proof of Theorem~\ref{mbu}, Part (2)]
Without loss of generality, assume $L>1.$ Fix $s_0\in(1,L]$ and let $h_{s_0}(s)=h'(s_0)(s-s_0)+h(s_0)$ be the tangent to the graph of $h$ at $s_0.$ Let $f_{s_0}(s)=s^2h_{s_0}(s);$ then $f(s)=s^2h(s)\leqslant f_{s_0}(s)$ or, equivalently, 
$\Phi(s^2)\leqslant\Phi_{s_0}(s^2)\df f_{s_0}(s).$ Therefore, for any $w\in A_2(Q),$
$$
\frac1{|I|}\sum_{J\in \mathcal{D}(I)}c_J^\Phi(w)\leqslant \frac1{|I|}\sum_{J\in \mathcal{D}(I)}c_J^{\Phi_{s_0}}(w),
$$
which means that
$$
\bel{B}_{Q,\Phi}\leqslant \bel{B}_{Q,\Phi_{s_0}}.
$$
Since $h_{s_0}$ is convex, we can estimate the Bellman function on the right using Part~(1) of Lemma~\ref{lmi}:
$$
\bel{B}_{Q,\Phi_{s_0}}(x_1,x_2)\leqslant A_{L,f_{s_0}}\big(\sqrt{x_1x_2}\big),
$$
where $A_{L,f}$ is given by~\eqref{701}. An easy calculation yields
$$
A_{L,f_{s_0}}(s)=\frac83\,(s-1)\left[3\big(h(s_0)-s_0h'(s_0)\big)(4L-s-1)+h'(s_0)(9 L^2-s^2-s-1)\right].
$$
To minimize this expression with respect to $s_0,$ we compute
$$
\frac{\partial A_{L,f_{s_0}}(s)}{\partial s_0}=\frac83\,(s-1)h''(s_0)\left[9 L^2-s^2-s-1-s_0(4L-s-1)\right].
$$
Since $h$ is concave (note that we do not need strict concavity here) and $4L-s-1\geqslant0,$ the minimum is attained at
\eq[s_0]{
s_0(s)=\frac{9L^2-s^2-s-1}{3(4L-s-1)}.
}
It is easy to verify that $s_0\in(1,L].$ Plugging this back into $A_{L,f_{s_0}},$ we get
\eq[hc]{
A_{L,f_{s_0(s)}}(s)=8(s-1)(4L-s-1)\,h(s_0(s)),
}
which completes the proof.
\end{proof}

Let us briefly discuss this result. First, we note that the optimal tangent does not depend on the choice of $h,$ making formula~\eqref{hc} particularly easy to use. This estimate also demonstrates the utility of our sharp estimates for convex $h,$ proved in Lemma~\ref{lmi} of the previous section. As noted earlier, and as is clear from the statement of Corollary~\ref{ttt4}, for power functions $\Phi(t)=t^\alpha,$ $1\leqslant\alpha\leqslant 3/2,$ the estimate $\bel{B}_{Q,f}(x_1,x_2)\leqslant A_{L,f_{s_0}}\big(\sqrt{x_1x_2}\big)$ amounts to writing
$$
f(s)=s^{2\alpha}=(s^2)^{3-2\alpha}(s^3)^{2\alpha-2}
$$
and then using H\"older's inequality to interpolate between the sharp results for $\alpha=1$ and $\alpha=3/2.$ This connection is not apparent from the general formula~\eqref{hc}.

However, the estimate just proved is not sharp; here is one way to improve it. We have
$$
\frac1{|I|}\sum_{J\in \mathcal{D}(I)}c_J^\Phi(w)= \frac1{|I|}\sum_{J\in \mathcal{D}(I)}c_J^{\Phi_{s_0}}(w)
-\frac1{|I|}\sum_{J\in \mathcal{D}(I)}c_J^{\Phi_{s_0}-\Phi}(w).
$$
The function $\tilde{\Phi}\df\Phi_{s_0}-\Phi$ is non-negative, so we can formally write
\eq[mixed]{
\bel{B}^{}_{Q,\Phi}\leqslant \bel{B}{}_{Q,\Phi_{s_0}}-\bel{b}_{Q,\tilde{\Phi}}.
}
Theorem~\ref{mbl} gives the formula for lower Bellman functions $\bel{b}_{Q,\Phi}$ when $h(s)=\Phi(s^2)/s^2$ is increasing on $[1,\infty).$ However, $\tilde{\Phi}(s^2)/s^2$ does not have this property -- in fact, it is decreasing for $s\leqslant s_0$ and increasing for $s\geqslant s_0.$ We could find the Bellman function for $(\Phi-\Phi_{s_0})\chi_{\{s\geqslant s_0\}}$ as a substitute, but, as a willing reader can verify, the resulting formula would be much more cumbersome than~\eqref{hc} and the improvement in the estimate, numerically quite small. In any case, such manipulations cannot produce a sharp estimate, because, as we demonstrate in Section~\ref{optimizers} below, the optimizing weights for the Bellman candidates $A_{L,f}$ and $a_f$ are essentially unique -- and different. This implies that an estimate of the form~\eqref{mixed}, combining sharp estimates that involve both $A_{L,f}$ and $a_f,$ cannot itself be sharp.

If one desires, as we do, to find the actual function $\bel{B}_{Q,\Phi}$ in the case when $h$ is concave, one has to understand the nature of the extremizing split $x=(x^-+x^+)/2$ in the main inequality~\eqref{a1}, i.e., the choice of $x^\pm$ that turns the inequality into an equality or approximate equality. The candidates $A_{L,f}$ and $a_f$ were derived under the assumption that such $x^\pm$ are infinitesimally close to $x$, meaning~\eqref{a1.1} is equivalent to its infinitesimal version,~\eqref{b1}. However, for concave $h,$ $A_{L,f}$ fails the main inequality at every point, as is clear from formula~\eqref{int} in the proof of Lemma~\ref{lmi}. For similar reasons, $a_f$ does not work either. Therefore, the optimizing split is not infinitesimal in this case. 

An approach that seems promising is to consider piecewise-linear concave $h,$ and then approximate an arbitrary concave $h$ by such functions. One then would look for the Bellman candidate $\mathcal{A}$ such that 
$$
\mathcal{A}''(s)=-16h(s)+\sum_k c_k\delta_{s_k}(s),
$$
where $s_k$ are the points of discontinuity of $h',$ $\delta_{s_k}$ are Dirac masses at $s_k,$ and $c_k$ are the negative coefficients chosen so that the main inequality for the resulting candidate $\mathcal{A}$ is satisfied on the whole domain. The presence of $c_k$ would entail a non-infinitesimal optimal split at the point $x=(s_k,s_k).$ The nature of the dependence of the values of $c_k$ on the local behavior of $h$ is the subject of further study.
\section{Optimizers}
\label{optimizers}
In this section we prove the converse inequalities to those established in Lemma~\ref{lmi}, thus completing the proof of Theorems~\ref{mbu} and~\ref{mbl}. This is done through the construction of two optimizing sequences -- one for $a_f$ and one for $A_{L,f}.$ Importantly, these sequences do not depend on $f,$ meaning that the upper Bellman function $\bel{B}_{Q,\Phi}$ is linear with respect to $\Phi$ for all increasing $\Phi$ such that $h(s)=\Phi(s^2)/s^2$ is convex and, separately, for all decreasing $\Phi.$ Likewise, the lower Bellman function $\bel{b}_{Q,\Phi}$ is linear in $\Phi$ on the class of all $\Phi$ such that $h$ is increasing.

Without loss of generality, we will define our optimizers almost everywhere on $I=(0,1).$ Fix a point $x\in\Omega_Q.$ We say that a sequence of functions $\{w^x_n\}$ on $(0,1)$ is an {\it optimizing sequence for a Bellman candidate $\A$ at $x,$} if $\{w^x_n\}$ satisfies the following three conditions:
\begin{align}
\label{con1}
\forall n, &\quad w^x_n\in\AQ((0,1));\\
\label{con2}
\forall n,&\quad  \av{w^x_n}{(0,1)}=x_1, \av{(w^x_n)^{-1}}{(0,1)}=x_2;\\
\label{con3}
\sum_{J\in\mathcal{D}(0,1)}&|J|\Phi\Big(\av{w^x_n}J\av{(w^x_n)^{-1}}J\Big) R_J(w^x_n)\longrightarrow \A\big(\sqrt{x_1x_2}\big),~\text{as}~n\to\infty.
\end{align}
In some settings, we are lucky to have an actual optimizing function, meaning that $w^x_n$ are the same for all $n,$ but it is rare in dyadic problems. The recursive construction of optimizers in this section is similar to the one used in~\cite{vv} and~\cite{ssv}.

Observe that it follows directly from the definitions~\eqref{6.6n} and~\eqref{6.7n} that if conditions~\eqref{con1}-\eqref{con3} are satisfied, then
\begin{align*}
\bel{B}_{Q,\Phi}(x)&\geqslant\mathcal{A}\big(\sqrt{x_1x_2}\big),\\
\bel{b}_{Q,\Phi}(x)&\leqslant\mathcal{A}\big(\sqrt{x_1x_2}\big).
\end{align*}

Note that due to the homogeneity of the problem it suffices to find $\{w^x_n\}$ only on the line $x_1=x_2.$ Indeed, if $\{w^{(s,s)}_n\}$ is an optimizer for the point $(s,s),$ then $\{\tau w_n^{(s,s)}\}$ is an optimizer for the point $(\tau s,\tau^{-1}s),$ and we can simply set $s=\sqrt{x_1x_2}$ and $\tau=\sqrt{x_1/x_2}.$ 

The optimizers for $a_f$ and $A_{L,f}$ are different, but each construction starts with the vector field generated by the kernel of the matrix in~\eqref{c1}. Specifically, replace $B(x_1,x_2)$ in~\eqref{c1} by $\A(\sqrt{x_1x_2}).$ This gives the equivalent matrix  
$$
\left[
\begin{array}{ll}
\ds\frac1{x_1^2}(s^2\A''(s)-s\A'(s)+32f(s))&\ds \A''(s)+\A'(s)\,\frac1s\\
\\
\ds\A''(s)+\A'(s)\,\frac1s&\ds\frac1{x_2^2}(s^2\A''(s)-s\A'(s)+32f(s))
\end{array}
\right],
$$
whose kernel is given by
\eq[o2]{
(s^2\A''(s)-s\A'(s)+32f(s))\,x_2\,dx_1+  (s^2\A''(s)-s\A'(s))\,x_1\,dx_2=0.
}

We now split the presentation in two parts. 

\subsection{The optimizer for $a_f$}
When $\A=a_f,$ we have $\A'(s)=16f(s)/s$ and so~\eqref{o2} becomes
$$
f'(s)(x_2\,dx_1+x_1\,dx_2)=0,
$$
provided $f'$ is defined at $s.$ At points $x$ with $f'(\sqrt{x_1x_2})=0$ or $f'(\sqrt{x_1x_2})$ is undefined,
~\eqref{o2} does not give any information. However, when $f'\ne0,$ we get
\eq[o3]{
\frac{dx_2}{dx_1}=-\frac{x_2}{x_1},
}
meaning that the vector field consists, locally, of tangents to hyperbolas of the form $x_1x_2=C.$

Now, fix $s\in[1,L]$ and $n\in\mathbb{N}.$ For $k=0,...,n,$ let $s_k=\sqrt{s^2(1-k/n)+k/n};$ also let $\Delta_n=\sqrt{(s^2-1)/n}$ so that $s_{k+1}^2=s_k^2-\Delta_n^2.$ We will define $w^{(s,s)}_n$ as a function that is constant on each of the $2^{-n}$ intervals in $\mathcal{D}_n((0,1)).$ In what follows, let us write $w_n^{(k)}$ for $w_n^{(s_k,s_k)};$ in this notation $w_n^x=w_n^{(s,s)}=w_n^{(0)}.$

We start by splitting the point $x=(s,s)$ into $x^-$ and $x^+$ along the tangent vector field~\eqref{o3}: let $x^\pm=(s\pm\Delta_n,s\mp\Delta_n)$ (note: the larger the $n,$ the more infinitesimal the split). Define $w^x_n$ to be the following concatenation of $w^{x^-}_n$ and $w^{x^+}_n:$
$$
w^x_n(t)=\begin{cases}
w^{x^{-}}_n(2t),&t\in(0,1/2),\\
w^{x^{+}}_n(2t-1),&t\in(1/2,1).
\end{cases}
$$
Due to homogeneity, set $w^{x^\pm}_n=\sqrt{\frac{s\pm\Delta_n}{s\mp\Delta_n}}\,w^{(1)}_n;$ equivalently, $w^{x^\pm}_n=\frac{s_1}{s\mp\Delta_n}\,w^{(1)}_n.$ Repeat this process $n-1$ more times, on each step setting
\eq[opt1.1]{
w^{(k)}_n(t)=
\begin{cases}
\ds\frac{s_{k+1}}{s_k+\Delta_n}\,w^{(k+1)}_n(2t),&t\in(0,1/2),
\medskip

\\
\ds\frac{s_{k+1}}{s_k+\Delta_n}\,w^{(k+1)}_n(2t-1),&t\in(1/2,1).
\end{cases}
}
After $n$ steps, we will have $s_n=1$ and, since the only test functions on the boundary are constants, we have to set 
\eq[opt1.2]{
w_n^{(n)}(t)=1,\qquad t\in(0,1).
}
This completely defines $w^x_n.$ The construction is pictured in Figure~\ref{faf}.

\begin{figure}[h]
\centering{
\includegraphics[width=14cm]{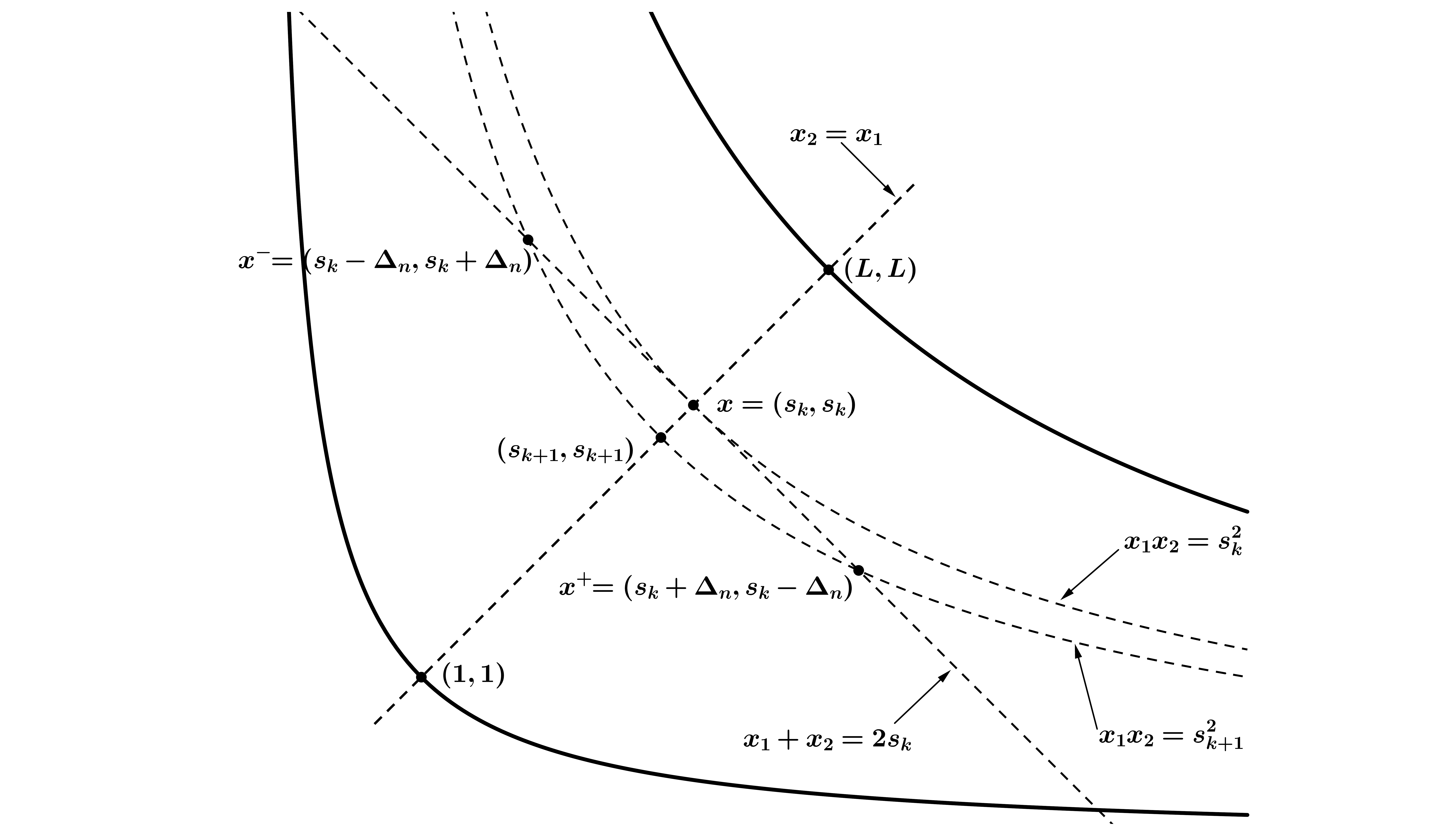}
\caption{The optimizer for $a_f:$ tangential splits $x=\frac12(x^-+x^+)$.}
\label{faf}
}
\end{figure}

Our main result for this section now follows.
\begin{lemma}
\label{lopt_af}
The sequence $\{w_n^{(s,s)}\}$ defined by~\eqref{opt1.1} and~\eqref{opt1.2} is an optimizing sequence for $a_f$ at $(s,s)$ for all $s\in[1,L].$ Consequently,
\ben
\item[(i)]
If $\Phi$ is decreasing, then
$$
\bel{B}_{Q,\Phi}(x)\geqslant a_f\big(\sqrt{x_1x_2}\big),\qquad \forall x\in\Omega_Q.
$$
\item[(ii)]
If $h$ is increasing, then
$$
\bel{b}_{Q,\Phi}(x)\leqslant a_f\big(\sqrt{x_1x_2}\big),\qquad \forall x\in\Omega_Q.
$$
\een
\end{lemma}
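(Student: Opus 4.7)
My plan is to verify the three defining conditions~\eqref{con1}--\eqref{con3} for the sequence $\{w_n^{(s,s)}\}$; once that is done, (i) and (ii) will drop out immediately from definitions~\eqref{6.6n} and~\eqref{6.7n}, extended to an arbitrary $x\in\Omega_Q$ by the homogeneity reduction stated at the start of this section (scale $w_n^{(s,s)}$ by $\tau=\sqrt{x_1/x_2}$ with $s=\sqrt{x_1 x_2}$).

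\medskip

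First I would prove, by reverse induction on $k=n,n-1,\dots,0$, the following claim: $w_n^{(k)}$ is a positive, dyadically simple weight on $(0,1)$, constant on each $J\in\mathcal{D}_{n-k}((0,1))$, with $\av{w_n^{(k)}}{(0,1)}=\av{(w_n^{(k)})^{-1}}{(0,1)}=s_k$, and with $\av{w_n^{(k)}}{J}\av{(w_n^{(k)})^{-1}}{J}=s_{k+m}^2$ for every $J\in\mathcal{D}_m((0,1))$, $0\leqslant m\leqslant n-k$. The base case $k=n$ is immediate from~\eqref{opt1.2}. For the inductive step a direct computation from~\eqref{opt1.1} shows that $(\av{w_n^{(k)}}{(0,1/2)},\av{(w_n^{(k)})^{-1}}{(0,1/2)})=(s_k-\Delta_n, s_k+\Delta_n)$, with the two coordinates swapped on $(1/2,1)$. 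Averaging recovers $(s_k,s_k)$ on $(0,1)$, the product on each half is $s_k^2-\Delta_n^2=s_{k+1}^2$, and deeper generations are handled by applying the inductive hypothesis to the appropriate dilated copy of $w_n^{(k+1)}$. Since $s_m$ is monotone in $m$ with $s_0=s\leqslant L$, every product $\av{w}{J}\av{w^{-1}}{J}$ stays below $Q$, so $w_n^{(s,s)}\in\AQ((0,1))$ with the correct averages, giving~\eqref{con1} and~\eqref{con2}.

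\medskip

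Next I would compute the dyadic sum in~\eqref{con3} by a telescoping recurrence. Setting $S_n^{(k)}:=\sum_{J\in\mathcal{D}((0,1))}|J|f(s_J(w_n^{(k)}))R_J(w_n^{(k)})$, the averages above yield $\Delta_{(0,1)}w_n^{(k)}=-2\Delta_n$ and $\Delta_{(0,1)}(w_n^{(k)})^{-1}=2\Delta_n$, so the root contribution is $f(s_k)\cdot 8\Delta_n^2/s_k^2=8\Delta_n^2\,h(s_k)$. Because each $c_J^\Phi$ is invariant under multiplication of $w$ by a positive constant and summing over $\mathcal{D}((0,1/2))$ or $\mathcal{D}((1/2,1))$ matches summing over $\mathcal{D}((0,1))$ for the corresponding dilate of $w_n^{(k+1)}$, the contributions from the two halves each equal $\tfrac12 S_n^{(k+1)}$. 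With $S_n^{(n)}=0$, telescoping gives
\[
S_n^{(0)}=8\Delta_n^2\sum_{k=0}^{n-1}h(s_k)=\frac{8(s^2-1)}{n}\sum_{k=0}^{n-1}h\!\left(\sqrt{s^2-(s^2-1)k/n}\right).
\]
In both cases of the lemma $h$ is monotone on $[1,L]$ (decreasing when $\Phi$ is decreasing and increasing by the hypothesis of (ii)), hence Riemann integrable, so the right-hand side tends to $\int_0^1 8(s^2-1)h(\sqrt{s^2-(s^2-1)t})\,dt$; the substitution $u=\sqrt{s^2-(s^2-1)t}$ rewrites this limit as $16\int_1^s f(u)/u\,du=a_f(s)$, verifying~\eqref{con3}.

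\medskip

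The main obstacle is really only the bookkeeping in the first step: tracking that the two multiplicative rescalings on the halves of each parent interval fit together so that the self-similar identity $\av{w}{J}\av{w^{-1}}{J}=s_m^2$ propagates down every generation and so that the arithmetic means on $(0,1/2)$ and $(1/2,1)$ recover $(s_k,s_k)$ on $(0,1)$. Once that inductive claim is secure, the zero-homogeneity of $c_J^\Phi$ collapses $S_n^{(k)}$ to a telescope and the remainder is a standard Riemann sum limit.
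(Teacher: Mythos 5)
Your proof is correct and follows essentially the same strategy as the paper's: verify \eqref{con1}--\eqref{con2} by backward induction on the level $k$, reduce the sum \eqref{con3} to $8\Delta_n^2\sum_{k=0}^{n-1}h(s_k)$ via the self-similar (zero-homogeneous) structure of $c_J^\Phi$, and pass to the Riemann integral to obtain $a_f(s)$. The only stylistic difference is that you organize the sum via the telescoping recurrence $S_n^{(k)} = 8\Delta_n^2 h(s_k) + S_n^{(k+1)}$, whereas the paper expands directly generation by generation and observes that the $2^k$ intervals in $\mathcal{D}_k$ all contribute identical terms; these are two bookkeepings of the same idea. You also make explicit one detail the paper leaves implicit: in both cases of the lemma $h$ is monotone, hence Riemann integrable, so the Riemann sum genuinely converges to $\int_1^{s^2}\Phi(t)/t\,dt$ even without any continuity assumption on $\Phi$. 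Finally, note that you have (correctly and silently) read \eqref{opt1.1} with the denominator $s_k-\Delta_n$ on the right half $(1/2,1)$; as printed, the paper repeats $s_k+\Delta_n$ on both halves, which is a typo inconsistent with the preceding display $w_n^{x^\pm}=\frac{s_1}{s\mp\Delta_n}w_n^{(1)}$.
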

\begin{proof}
The verification of~\eqref{con1} and~\eqref{con2} consists of a (backward) finite induction on $k:$ first consider $w^{(n-1)}_n$ and check that $\av{w^{(n-1)}_n}I=s_{n-1},$ $\av{(w^{(n-1)}_n)^{-1}}I=s_{n-1},$ and $w_n^{(n-1)}\in\AQ(I);$ then do the same for $w^{(n-2)}_n,$ and so on. We leave the details to the reader.

It remains to verify~\eqref{con3}. We have
$$
\sum_{J\in\mathcal{D}(0,1)}|J|\Phi\Big(\av{w^x_n}J\av{(w^x_n)^{-1}}J\Big) R_J(w^x_n)=\sum_{k=0}^{n-1}\sum_{J\in\mathcal{D}_k(0,1)}2^{-k}\Phi\Big(\av{w^{x_J}_n}J\av{(w^{x_J}_n)^{-1}}J\Big) R_J(w^{x_J}_n),
$$
where $w^{x_J}_n$ is completely determined by the generation of $J$ and its position among fellow intervals of the same generation. In fact, for any two intervals $I$ and $J$ of the $k$-th generation, $w^{x_I}_n,$ $w^{x_J}_n,$ and $w_n^{(k)}$ differ only by a multiplicative factor, which does not affect their contributions to the overall sum. Thus, 
\begin{align*}
\sum_{k=0}^{n-1}\sum_{J\in\mathcal{D}_k(0,1)}&2^{-k}\Phi\Big(\av{w^{x_J}_n}J\av{(w^{x_J}_n)^{-1}}J\Big) R_J(w^{x_J}_n)\\
&=\sum_{k=0}^{n-1}\sum_{J\in\mathcal{D}_k(0,1)}2^{-k}\Phi\Big(\av{w_n^{(k)}}J\av{(w_n^{(k)})^{-1}}J\Big) R_J(w_n^{(k)})\\
&=\sum_{k=0}^{n-1}\Phi(s_k\cdot s_k)\left[\left(\frac{(s_k-\Delta_n)-(s_k+\Delta_n)}{s_k}\right)^2
+\left(\frac{(s_k+\Delta_n)-(s_k-\Delta_n)}{s_k}\right)^2\right]\\
&=8\sum_{k=0}^{n-1}\Phi(s_k^2)\,\frac{\Delta^2_n}{s_k^2}~~\xrightarrow[n\to\infty]{}~~8\int_1^{s^2}\frac{\Phi(t)}t\,dt=
16\int_1^{s}\frac{f(t)}t\,dt=a_f(s).
\end{align*}

\end{proof}
\begin{remark}
\label{rem_elem}
The optimizer $\{w_n\}$ from~\eqref{opt1.1} and~\eqref{opt1.2} has one more property, which is used in the proof of Theorem~\ref{ttt2} in Section~\ref{elementary}. Namely, for any $R\in\mathcal{D}_k(0,1),$
$$
\frac1{|R|}\,\sum_{J\in\mathcal{D}(R)} c_J^\Phi(w_n)=\sum_{j=0}^{k-1}\Phi(s_j^2)\,\frac{\Delta^2_n}{s_j^2}~\leqslant\!\! \sum_{J\in\mathcal{D}(0,1)} c_J^\Phi(w_n).
$$
Therefore, if one considers the ``local'' Carleson norm of $\{c^\Phi_J(w)\},$ by taking the supremum in~\eqref{2} over all dyadic subintervals of $(0,1),$ as opposed to all dyadic intervals in $\mathcal{D},$ that norm is realized on the the interval $(0,1)$ itself.
\end{remark}
\subsection{The optimizer for $A_{L,f}$}
When $\A=A_{L,f},$ given by~\eqref{G}, we have $s^2\A''(s)-s\A'(s)+32f(s)=-(s^2\A''(s)+s\A'(s))$ and~\eqref{o2} becomes
$$
(s^2\A''(s)+s\A'(s))(x_2\,dx_1-x_1\,dx_2)=0.
$$
A simple calculation shows that $s^2\A''(s)+s\A'(s)=16\Big(f(s)+f(L)\,\frac sL+\int_s^Lh(z)\,dz\Big),$ meaning that if $f$ is monotone and non-constant, then the only point where $s^2\A''(s)+s\A'(s)=0$ is $s=L.$ This gives no information as to the direction in which we should split a point on the boundary. However, infinitesimally, only tangential splitting will keep the new points in the domain, so that will be our choice. At interior points of $\Omega_Q$ we have
\eq[o4]{
\frac{dx_2}{dx_1}=\frac{x_2}{x_1},
}
meaning that the vector field is locally aligned with lines through the origin, $x_2=Cx_1.$

As before, homogeneity allows us to construct an optimizing sequence only along the segment $x_1=x_2=s, 1\leqslant s\leqslant L.$ 
Furthermore, to save effort we will construct an optimizing sequence for the point $x=(s,s)$ only when $s$ is a dyadically-rational point in the interval $[1,L]$ and then use an approximation argument to establish our key result. Thus, let $s=L-m2^{-N}(L-1)$ for some integer $N\geqslant0$ and an odd $m\in\{0,1,...,2^N\}.$

Take $n\geqslant1$ and let $\Delta_n=2^{-N-n}(L-1),$ $s_k=L-k\Delta_n,$ $k=0,1,...,2^{N+n}.$ In this notation, $s=s_{m2^n}.$ Similarly to the previous case, defining an optimizing sequence for the point $(s_{m2^n},s_{m2^n})$ requires defining, for each $n,$ $2^{N+n}+1$ functions, which we will call $w_n^{(k)},$ $k=0,...,2^{N+n}.$ However, in contrast with the optimizer for $a_f,$ the definition of $w_n^{(k)}$ now involves not only $w_n^{(k+1)},$ but also $w_n^{(k-1)}.$

Since $s_{2^{N+n}}=1,$ we set 
\eq[opt2.1]{
w^{(2^{N+n})}_n(t)=1,\qquad t\in(0,1).
}
Assume for the moment that we have already defined the function $w_n^{(0)}$ corresponding to $s_0=L.$ These two ``end-point'' functions are enough for us to derive $w^{(k)}_n$ for all other $k.$ For all $k=1,...,2^{N-n}-1,$ we split the point $(s_k,s_k)$ along the line $x_2=x_1$ (let us call this a {\it normal} split): 
$$
(s_k,s_k)=\frac12((s_{k-1},s_{k-1})+(s_{k+1},s_{k+1}))
$$
and define $w^{(k)}_n$ to be a concatenation of $w^{(k-1)}_n$ and $w^{(k+1)}_n:$
\eq[opt2.2]{
w^{(k)}_n(t)=\begin{cases}
w^{(k-1)}_n(2t),&t\in(0,1/2)
\smallskip

\\
w^{(k+1)}_n(2t-1),&t\in(1/2,1).
\end{cases}
}
The reader can convince himself that this does define each $w^{(k)}_n$ almost everywhere on $(0,1)$ up to the knowledge of $w^{(0)}_n,$ by writing this construction inductively, as done in a similar setting in~\cite{vas}. To find $w_n^{(0)},$ we use the tangetial split from the previous case. As in~\eqref{opt1.1}, define $w_n^{(0)}$ through $w_n^{(1)}$ by
\eq[opt2.3]{
w_n^{(0)}(t)=
\begin{cases}
\ds\frac{s_1}{L+\Delta^*_n}\,w_n^{(1)}(2t),&t\in(0,1/2),
\medskip

\\
\ds\frac{s_1}{L-\Delta^*_n}\,w_n^{(1)}(2t-1),&t\in(1/2,1),
\end{cases}
}
where $\Delta^*_n=\sqrt{L^2-s^2_1}.$ This definition closes the loop: the function $w_n^{(0)}$ has been defined almost everywhere on $(0,1)$ and thus so have the functions $w_n^{(k)}$ for all other $k.$ The construction is pictured in Figure~\ref{falf}. We are now ready to prove our main result.
\begin{figure}[h]
\centering{
\includegraphics[width=14cm]{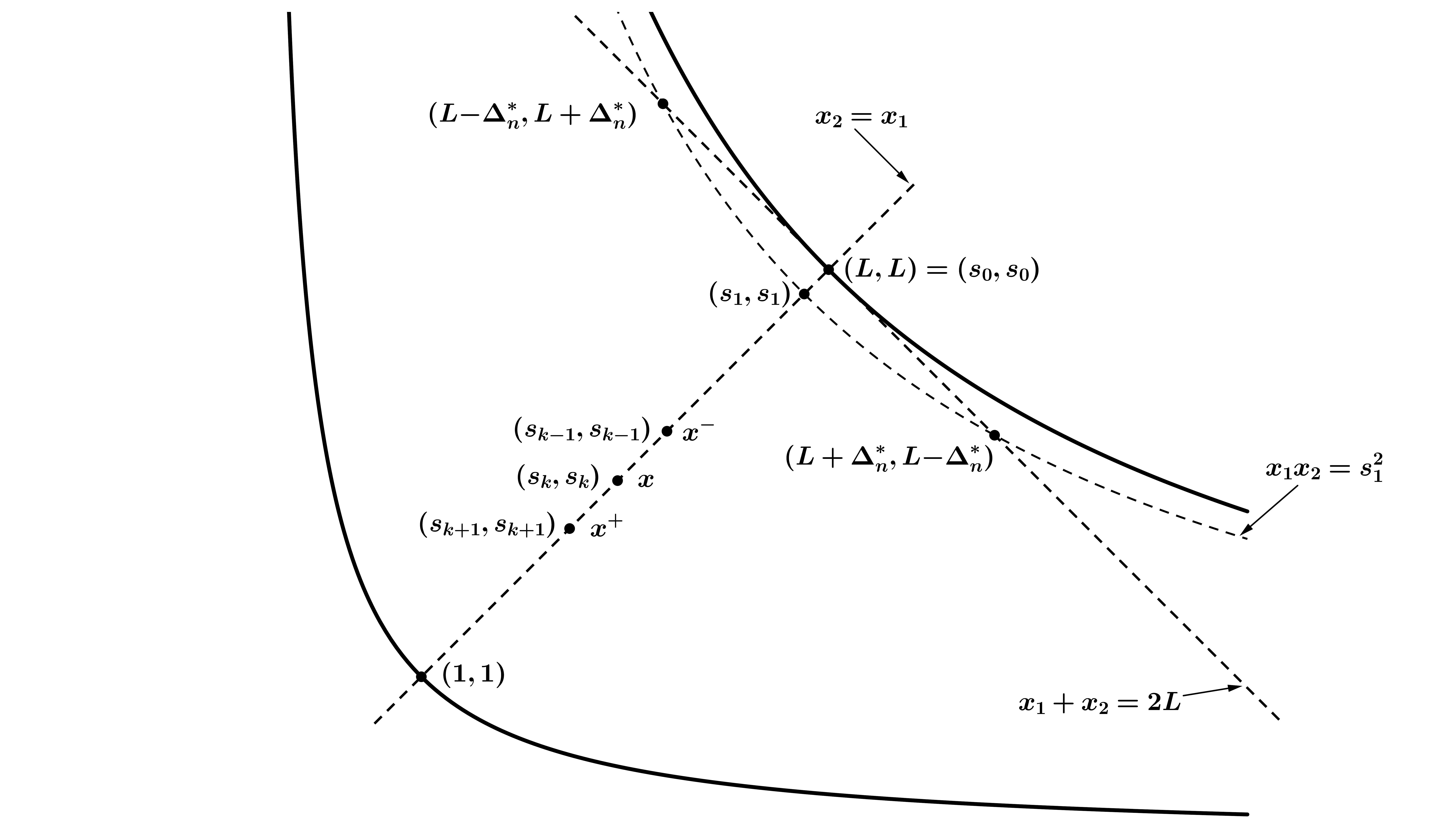}
\caption{The optimizer for $A_{L,f}:$ normal splits $x=\frac12(x^-+x^+)$ inside the domain; tangential split on the boundary.}
\label{falf}
}
\end{figure}

\begin{lemma}
\label{lopt_alf}
The sequence $\{w_n^{(m2^n)}\}$ defined by~\eqref{opt2.1}, \eqref{opt2.2}, and~\eqref{opt2.3} is an optimizing sequence for $A_{L,f}$ at $x=(s,s),$ where $s=L-m2^{-N}(L-1).$ Consequently, if $\Phi$ is increasing and $h$ is concave, then
$$
\bel{B}_{Q,\Phi}(x)\geqslant A_{L,f}(\sqrt{x_1x_2}),\qquad \forall x\in\Omega_Q.
$$
\end{lemma}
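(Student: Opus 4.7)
The plan is to establish, first, that the sequence $\{w_n^{(m2^n)}\}$ satisfies the defining conditions~\eqref{con1}--\eqref{con3} at every dyadic-rational point $s = L - m2^{-N}(L-1)$, and then to pass from those special points to arbitrary $x \in \Omega_Q$. Conditions~\eqref{con1} and~\eqref{con2} reduce to direct averaging computations that I will carry out by induction on the recursive tree of splits: for a normal split~\eqref{opt2.2} at $(s_k, s_k)$ the two half-averages are $(s_{k-1}, s_{k-1})$ and $(s_{k+1}, s_{k+1})$, while for the tangential split~\eqref{opt2.3} at the boundary point $(L, L)$ the choice $\Delta_n^* = \sqrt{L^2 - s_1^2}$ is precisely what forces $\langle w_n^{(0)}\rangle_{(0,1)} = \langle (w_n^{(0)})^{-1}\rangle_{(0,1)} = L$. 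The $A_2^{d,Q}$-property at every dyadic sub-interval is then automatic, since the product of half-averages is $s_{k\pm 1}^2 \leqslant L^2$ or $s_1^2 \leqslant L^2$ in every case, and the construction is recursive.

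The heart of the proof is verifying~\eqref{con3}. Let $S_n^{(k)}$ denote the Carleson sum over $\mathcal{D}(0,1)$ associated to $w_n^{(k)}$. Since each half of $w_n^{(k)}$ is a scalar multiple of some $w_n^{(k')}$, and since both $\Phi(\langle w\rangle\langle w^{-1}\rangle)$ and $R_J(w)$ are invariant under $w \mapsto \tau w$, separating the top-level contribution from the contributions of the two halves yields the discrete boundary-value problem
\begin{align*}
S_n^{(k)} &= \frac{8 f(s_k)\Delta_n^2}{s_k^2} + \frac12\bigl(S_n^{(k-1)} + S_n^{(k+1)}\bigr), \qquad 1 \leqslant k \leqslant 2^{N+n}-1,\\
S_n^{(0)} &= \frac{8 f(L)(\Delta_n^*)^2}{L^2} + S_n^{(1)}, \qquad S_n^{(2^{N+n})} = 0.
\end{align*}
This is a discrete Poisson problem on the grid $\{s_j\}$ whose continuum envelope $\A$ satisfies $\A''(s) = -16 f(s)/s^2$ with $\A(1)=0$; expanding $(\Delta_n^*)^2 = 2L\Delta_n - \Delta_n^2$ recasts the $k=0$ equation as a Neumann-type condition $\A'(L) = 16 f(L)/L$. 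This is exactly the ODE boundary-value problem solved in Section~\ref{induction} to produce the candidate in~\eqref{G}, so its unique solution is $\A = A_{L,f}$. I will solve the discrete problem by summation by parts, which expresses $S_n^{(m 2^n)}$ as a double sum recognizable as a Riemann sum for the iterated integrals in~\eqref{G}.

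The step I expect to be most delicate is the rigorous passage to the limit together with the extension to non-dyadic $s$. For the limit, the main subtlety is that the boundary contribution $8 f(L)(\Delta_n^*)^2/L^2$ is an $O(\Delta_n)$ quantity (discretizing the Neumann datum), while the interior sources are $O(\Delta_n^2)$; the summation-by-parts identity automatically reorganizes these contributions so that no spurious boundary terms survive in the limit, and the factor $(\Delta_n^*)^2/(2L\Delta_n)\to 1$ is exactly what produces the constant $16f(L)/L$ in $A_{L,f}'(L)$. Having obtained $\bel{B}_{Q,\Phi}(s,s) \geqslant A_{L,f}(s)$ on the dense set of dyadic-rational $s\in[1,L]$, I will use the homogeneity~\eqref{hom} to reduce the general case to the diagonal, the continuity of $A_{L,f}$ (clear from~\eqref{G}), and the a priori continuity of $\bel{B}_{Q,\Phi}$ (of the kind invoked in the proof of Theorem~\ref{ttt2}) to extend the inequality to all $x\in\Omega_Q$.
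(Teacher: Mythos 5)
Your proposal follows the paper's own proof quite closely: the same recursive verification of~\eqref{con1}--\eqref{con2}, the same tridiagonal recurrence $\Sigma_k = \tfrac12\Sigma_{k-1}+\tfrac12\Sigma_{k+1}+8h(s_k)\Delta_n^2$ with boundary data $\Sigma_{2^{N+n}}=0$ and $\Sigma_0=\Sigma_1+8(L^2-s_1^2)h(L)$, the same summation and Riemann-sum limit yielding $A_{L,f}(s)$, and the same final step of extending from dyadic-rational $s$ to all of $[1,L]$ via homogeneity and continuity. Your identification of the continuum boundary-value problem with the Neumann condition $\A'(L)=16f(L)/L$ is an accurate reading of how~\eqref{G} was derived.

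The one place where you wave a hand is the \emph{a priori} continuity of $s\mapsto\bel{B}_{Q,\Phi}(s,s)$. Your parenthetical points to the proof of Theorem~\ref{ttt2}, but that proof relies on the continuity of the explicit function $a_f$, not of the unknown Bellman function; it does not supply what you need here. The paper instead observes that the main inequality~\eqref{d1} forces $s\mapsto\bel{B}_{Q,\Phi}(s,s)$ to be concave on $[1,L]$, hence continuous on the open interval $(1,L)$, which is what allows the limit from dyadic-rational $s_n\to s$ to pass through $\bel{B}_{Q,\Phi}(s_n,s_n)\geqslant A_{L,f}(s_n)$. You should make that concavity argument explicit rather than appeal to an analogy that doesn't quite apply.
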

\begin{proof}
We will again leave it to the reader to verify~\eqref{con1} and~\eqref{con2}, which in this case is done by induction on the parameter $\ell=n+N.$ Let us show~\eqref{con3}. Let 
$$
\Sigma_k=\sum_{J\in\mathcal{D}(0,1)}|J|\Phi\Big(\av{w_n^{(k)}}J\av{(w_n^{(k)})^{-1}}J\Big) R_J\big(w_n^{(k)}\big).
$$ 
Our recursive construction for $w^{(k)}_n$ directly implies that for $k=1,...,2^{N+n}-1,$
$$
\Sigma_k=\frac12\Sigma_{k-1}+\frac12\Sigma_{k+1}+8h(s_k)\,\Delta_n^2.
$$
On the other hand, $\Sigma_{2^{N+n}}=0,$ while~\eqref{opt2.3} implies $\Sigma_0=\Sigma_1+8(L^2-s_1^2)h(L).$
We can easily solve this tridiagonal linear system:
$$
\Sigma_k=(2^{N+n}-k)8(L^2-s_1^2)h(L)+16\Delta_n^2\left[\sum_{j=1}^{2^{N+n}-1}(2^{N+n}-j)\,h(s_j)-
\sum_{j=1}^k(k-j)\,h(s_j)\right],
$$
for $k=0,...,2^{N+m}$ (where for $k=0$ the second sum in brackets is zero by convention). Since for any $i$ and $j,$  $(i-j)\Delta_n=s_j-s_i,$ and $L^2-s_1^2=\Delta_n(2L-\Delta_n),$ we have
$$
\Sigma_k=8(s_k-1)(2L-\Delta_n)h(L)+16\left[\sum_{j=1}^{2^{N+n}-1}(s_j-1)\,h(s_j)\,\Delta_n-
\sum_{j=1}^k(s_j-s_k)\,h(s_j)\,\Delta_n\right].
$$
Setting $k=m2^n$ and $s_k=s,$ and letting $n\to\infty,$ we get
$$
\Sigma_{m2^n}\xrightarrow[n\to\infty]{}
16(s-1)\frac{f(L)}L+16\left[\int_1^L(z-1)\,h(z)\,dz-\int_s^L(z-s)\,h(z)\,dz\right],
$$
which is the same thing as $A_{L,f}(s)$ from~\eqref{G}.

So far, we have proved that $\bel{B}_{Q,\Phi}(x_1,x_2)\geqslant A_{L,f}\big(\sqrt{x_1x_2}\big)$ for all $(x_1,x_2)\in\Omega_Q$ such that $\sqrt{x_1x_2}$ is a dyadically-rational point in the interval $[1,L].$ Now, any $s\in(1,L)$ can be written as $s=\lim_{n\to\infty}s_n,$ where each $s_n$ is dyadically-rational. Observe that $\bel{B}_{Q,\Phi}(s,s)$ is continuous on $(1,L)$ as a function of $s,$ since it is concave on $[1,L]$ by~\eqref{d1}. Since $A_{L,f}$ is also continuous on $[1,L],$ we can take the limit in the inequality
$$
\bel{B}_{Q,\Phi}(s_n,s_n)\geqslant A_{L,f}(s_n),
$$
thus completing the proof.
\end{proof}

\end{document}